\theoremstyle{plain}
\newtheorem{theorem}{Theorem}[section]
\newtheorem{proposition}[theorem]{Proposition}
\newtheorem{corollary}[theorem]{Corollary}
\newtheorem{lemma}[theorem]{Lemma}
\theoremstyle{definition}
\theoremstyle{remark}
\newtheorem{remark}[theorem]{Remark}
\newcommand{\xra}{\xlongrightarrow}
\newcommand{\CP}[1]{\mathbb{C}P^{#1}}
\newcommand{\HP}[1]{\mathbb{H}P^{#1}}
\newcommand{\id}{\mathbbm{1}}
\newcommand{\an}[1]{\ensuremath{\mathbf{A}_{n}^{#1}}}
\newcommand{\PD}{Poincar\'{e} duality\,}
\newcommand{\Z}{\ensuremath{\mathbb{Z}}}
\newcommand{\z}[1]{\ensuremath{\mathbb{Z}/2^{#1}}}
\newcommand{\zp}[1]{\ensuremath{\mathbb{Z}/p^{#1}}}
\newcommand{\wk}{\widetilde{K}}
\newcommand{\ko}{\widetilde{KO}}
\DeclareMathOperator{\coker}{coker}
\DeclareMathOperator{\im}{im}
\DeclareMathOperator{\Sq}{Sq}
\DeclareMathOperator{\Wh}{Wh}
\newcommand{\mat}[4]{\ensuremath{\ensuremath{\left[\begin{smallmatrix}
  #1&#2\\
  #3&#4
\end{smallmatrix}\right]}}}
\newcommand{\matwo}[2]{\ensuremath{\footnotesize{\begin{bmatrix}
  #1\\
  #2
\end{bmatrix}}}}
\newcommand{\smatwo}[2]{\ensuremath{\left[\begin{smallmatrix}
 #1\\
 #2
\end{smallmatrix}\right]}}
\title[Suspension Homotopy of $5$-manifolds]{The Homotopy Decomposition of the Suspension of a non-simply-connected $5$-manifold}
\author[P. Li]{Pengcheng Li}
\address{Department of Mathematics, School of Sciences, Great Bay University, Dongguan, Guangdong \rm{523000}, China}
\email{lipcaty@outlook.com}
\urladdr{https://lipacty.github.io}
\author[Z. Zhu]{Zhongjian Zhu}
\address{College of Mathematics and Physics, Wenzhou University, Wenzhou, Zhejiang \rm{325035}, China}
\email{zhuzhongjian@amss.ac.cn}
\subjclass[2020]{Primary 55P15, 55P40, 57R19}
\keywords{Homotopy Type, Suspension, $5$-manifolds, cohomotopy sets}
\begin{document}

\begin{abstract}
 In this paper we determine the homotopy types of the reduced suspension space of certain connected orientable closed smooth $5$-manifolds. As applications, we compute the reduced $K$-groups of $M$ and show that the suspension map between the third cohomotopy set $\pi^3(M)$ and the fourth cohomotopy set $\pi^4(\Sigma M)$ is a bijection.
\end{abstract}
\maketitle

\section{Introduction}\label{sec:intro}

One of the goals of algebraic topology of manifolds is to determine the homotopy type of the (reduced) suspension space $\Sigma M$ of a given manifold M. This problem has attracted a lot of attention since So and Theriault's work \cite{ST19}, which showed how the homotopy decompositions of the (double) suspension spaces of manifolds can be used to characterize some important invariants in geometry and mathematical physics, such as reduced $K$-groups and gauge groups. Several works have followed this direction, such as \cite{Huang-6mflds,Huang21,Huang22,HL,lipc23,CS22}. The integral homology groups $H_\ast(M)$ serve as the fundamental input for this topic. As shown by these papers, the 2-torsion of $H_\ast(M)$ and potential obstructions from certain Whitehead products usually prevent a complete  homotopy classification of the (double) suspension space of a given manifold $M$.

The main purpose of this paper is to investigate the homotopy types of the suspension of a non-simply-connected orientable closed smooth $5$-manifold. Notice that Huang \cite{Huang21} studied the suspension homotopy of $5$-manifolds $M$ that are $S^1$-principal bundles over a simply-connected oriented closed $4$-manifold. The homotopy decompositions of $\Sigma^2M$ are successfully applied to determine the homotopy types of the pointed looped spaces of the gauge groups of a principal bundle over $M$. In this paper we greatly loosen the restriction on the homology groups $H_\ast(M)$ of the non-simply-connected $5$-manifold $M$ by assuming that $H_1(M)$ has a torsion subgroup that is not divided by $6$ and $H_2(M)$ contains a general torsion part. 

To state our main results, we need the following notion and notations.
Let $n\geq 2$. Denote by $\eta=\eta_n=\Sigma^{n-2}\eta$ the iterated suspension of the first Hopf map $\eta\colon S^3\to S^2$. 
Recall from (cf. \cite{Todabook}) that $\pi_3(S^2)\cong\Z\langle \eta\rangle$, $\pi_{n+1}(S^n)\cong \z{}\langle \eta \rangle$ for $n\geq 3$ and $\pi_{n+2}(S^n)\cong \z{}\langle \eta^2 \rangle$.
For an abelian group $G$, denote by $P^{n+1}(G)$ the Peterson space characterized by having a unique reduced cohomology group $G$ in dimension $n+1$; in particular, denote by $P^{n+1}(k)=P^{n+1}(\Z/k)$ the mod $k$ Moore space of dimension $n+1$, where $\Z/k$ is the group of integers modulo $k$, $k\geq 2$. There is a canonical homotopy cofibration 
\[S^{n}\xra{k}S^{n}\xra{i_{n}}P^{n+1}(k)\xra{q_{n+1}}S^{n+1},\]
where $i_n$ is the inclusion of the bottom cell and $q_{n+1}$ is the pinch map to the top cell. 
Recall that for each prime $p$ and integer $r\geq 1$, there are  higher order Bockstein operations $\beta_r$ that detect the degree $2^r$ map on spheres $S^n$. For each $r\geq 1$, there are canonical maps $\tilde{\eta}_r\colon S^{n+2}\to P^{n+2}(2^r)$ satisfying the relation $q_{n+1}\tilde{\eta}_r=\eta$, see Lemma \ref{lem:Moore1}. 
A finite CW-complex $X$ is called an \emph{$\an{2}$-complex} if it is $(n-1)$-connected and has dimension at most $n+2$. In 1950, Chang \cite{Chang50} proved that for $n\geq 3$, every $\an{2}$-complex $X$ is homotopy equivalent to a wedge sum of finitely many spheres and mod $p^r$ Moore spaces with $p$ any primes and the following four \emph{elementary (or indecomposable) Chang complexes}: 
\begin{align*}
  C^{n+2}_\eta&=S^n\cup_\eta\bm C S^{n+1}=\Sigma^{n-2}\CP{2},~~ 
  C^{n+2}_r=P^{n+1}(2^r)\cup_{i_{n}\eta}\bm C S^{n+1},\\
  C^{n+2,s}&=S^n\cup_{\eta q_{n+1}}\bm C P^{n+1}(2^s),~~
  C^{n+2,s}_r=P^{n+1}(2^r)\cup_{i_n\eta q_{n+1}}\bm C P^{n+1}(2^s),
\end{align*} 
where $\bm{C}X$ denotes the reduced cone on $X$ and $r,s$ are positive integers.  We recommend \cite{ZP17, ZP21, ZLP19,ZP23,lipc2-an2} for recent work on the homotopy theory of Chang complexes.

Now it is prepared to state our main result.
Let $M$ be an orientable closed $5$-manifold whose integral homology groups are given by
\begin{equation}\label{HM}
  \begin{tabular}{cccccccc}
    \toprule 
    $i$&$1$&$2$&$3$&$4$&$0,5$&$\geq 6$\\
    \midrule 
    $H_i(M)$&$\Z^l\oplus H$&$\Z^d\oplus T$&$\Z^d\oplus H$&$\Z^l$&$\Z$&$0$
   \\ \bottomrule
  \end{tabular},
\end{equation}
where $l,d$ are positive integers and $H,T$ are finitely generated torsion abelian groups. 

\begin{theorem}\label{thm:main}
  Let $M$ be an orientable smooth closed $5$-manifold with $H_\ast(M)$ given by (\ref{HM}). Let $T_2\cong \bigoplus_{j=1}^{t_2}\z{r_j}$ be the $2$-primary component of $T$ and suppose that $H$ contains no $2$- or $3$-torsion. There exist integers $c_1,c_2$ that depend on $M$ and satisfy
  \[0\leq c_1 \leq \min\{l,d\},\quad 0\leq c_2\leq \min\{l-c_1,t_2\}\] 
  and $c_1=c_2=0$ if and only if the Steenrod square $\Sq^2$ acts trivially on $H^2(M;\z{})$. Denote $T[c_2]=T/\oplus_{j=1}^{c_2}\z{r_j}$.
\begin{enumerate}[1.]
  \item\label{thm-spin} Suppose $M$ is spin, then there is a homotopy equivalence 
  \begin{multline*}
    \Sigma M\simeq \big(\bigvee_{i=1}^lS^2\big)\!\vee\! \big(\bigvee_{i=1}^{d-c_1}S^3\big)\!\vee\! \big(\bigvee_{i=1}^{d}S^4\big)\!\vee\! \big(\bigvee_{i=1}^{l-c_1-c_2}S^5\big)\!\vee\! P^3(H)\!\vee\! P^5(H)\\\!\vee\! \big(\bigvee_{i=1}^{c_1} C^5_\eta\big)\!\vee\! P^4(T[c_2])\!\vee\!\big(\bigvee_{j=1}^{c_2}C^5_{r_j}\big)\!\vee\! S^6.
  \end{multline*}

\item\label{thm-nonspin} Suppose $M$ is non-spin, then there are three possibilities for the homotopy types of $\Sigma M$.
\begin{enumerate}
  \item\label{nonspin:a} If for any $u, v\in H^4(\Sigma M;\z{})$ satisfying $\Sq^2(u)\neq 0$ and $\Sq^2(v)=0$, there holds $u+v\notin \im(\beta_r)$ for any $r\geq 1$,
then there is a homotopy equivalence 
\begin{multline*}
  \Sigma M\simeq \big(\bigvee_{i=1}^lS^2\big)\!\vee\! \big(\bigvee_{i=1}^{d-c_1}S^3\big)\!\vee\! \big(\bigvee_{i=2}^{d}S^4\big)\!\vee\! \big(\bigvee_{i=1}^{l-c_1-c_2}S^5\big)\!\vee\! P^3(H)\!\vee\! P^5(H)\\\!\vee\! \big(\bigvee_{i=1}^{c_1} C^5_\eta\big)\!\vee\! P^4(T[c_2])\!\vee\!\big(\bigvee_{j=1}^{c_2}C^5_{r_j}\big)\!\vee\! C^6_\eta;
\end{multline*}
\item\label{nonspin:b} 
otherwise either there is a homotopy equivalence 
\begin{multline*}
  \Sigma M\simeq \big(\bigvee_{i=1}^lS^2\big)\!\vee\! \big(\bigvee_{i=1}^{d-c_1}S^3\big)\!\vee\! \big(\bigvee_{i=1}^{d}S^4\big)\!\vee\! \big(\bigvee_{i=1}^{l-c_1-c_2}S^5\big)\!\vee\! P^3(H)\!\vee\! P^5(H)\\\!\vee\! \big(\bigvee_{i=1}^{c_1} C^5_\eta\big)\!\vee\!\big(\bigvee_{j=1}^{c_2}C^5_{r_j}\big)\!\vee\!P^4\big(\frac{T[c_2]}{\z{r_{j_1}}}\big)\!\vee\! (P^4(2^{r_{j_1}})\cup_{\tilde{\eta}_{r_{j_1}}}e^6), 
\end{multline*}
or there is a homotopy equivalence 
\begin{multline*}
  \Sigma M\simeq \big(\bigvee_{i=1}^lS^2\big)\!\vee\! \big(\bigvee_{i=1}^{d-c_1}S^3\big)\!\vee\! \big(\bigvee_{i=1}^{d}S^4\big)\!\vee\! \big(\bigvee_{i=1}^{l-c_1-c_2}S^5\big)\!\vee\! P^3(H)\!\vee\! P^5(H)\\\!\vee\! \big(\bigvee_{i=1}^{c_1} C^5_\eta\big)\!\vee\! P^4(T[c_2])\!\vee\!\big(\bigvee_{j_1\neq j=1}^{c_2}C^5_{r_j}\big)\!\vee\! (C^5_{r_{j_1}}\cup_{i_P\tilde{\eta}_{r_{j_1}}}e^6), 
\end{multline*}
where $i_P\colon P^{5}(2^{r_{j_1}})\to C^{6}_{r_{j_1}}$ is the canonical inclusion map; in both cases, $r_{j_1}$ is the minimum of $r_j$ such that $u+v\in \im(\beta_{r_{j_1}})$.
\end{enumerate}
\end{enumerate}

\end{theorem}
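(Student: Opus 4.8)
The overall plan is to build $\Sigma M$ up through a cell (equivalently, homology) decomposition, to describe each successive attaching map by the Hilton--Milnor theorem in terms of homotopy groups of spheres and of Moore and Chang complexes, and to pin down the non-trivial components using the Steenrod square $\Sq^2$ and the higher Bockstein operations $\beta_r$. Poincar\'e duality and universal coefficients give $\tilde H_i(\Sigma M)=\Z^l\oplus H,\ \Z^d\oplus T,\ \Z^d\oplus H,\ \Z^l,\ \Z$ for $i=2,\dots,6$. Since $\Sigma M$ is a suspension its mod-$2$ cohomology ring is trivial and $\Sq^i$ annihilates classes of degree $<i$, so the only squares that can be non-zero are $\Sq^2\colon\tilde H^3(\Sigma M;\z{})\to\tilde H^5(\Sigma M;\z{})$ and $\Sq^2\colon\tilde H^4(\Sigma M;\z{})\to\tilde H^6(\Sigma M;\z{})$, which under the suspension isomorphism are $\Sq^2$ on $H^2(M;\z{})$ and on $H^3(M;\z{})$. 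Because $H$ has no $2$- or $3$-torsion, the Peterson summands $P^3(H)$, $P^5(H)$ and the part $P^4(T_{\mathrm{odd}})$ coming from the odd torsion of $T$ carry no mod-$2$ or mod-$3$ cohomology; killing the relevant retraction obstructions (which live in cohomology with coefficients in homotopy groups of spheres, governed in this range precisely by these $\Sq^2$'s and the order-$3$ class $\alpha_1$) then splits $(\bigvee^lS^2)\vee P^3(H)\vee P^5(H)\vee P^4(T_{\mathrm{odd}})$ off $\Sigma M$ as a wedge summand. This reduces the theorem to a $2$-connected complex $X$ with $\tilde H_i(X)=\Z^d\oplus T_2,\ \Z^d,\ \Z^l,\ \Z$ for $i=3,4,5,6$, which we write as $X\simeq X^{(5)}\cup_{\Sigma f}e^6$.

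Next I would determine $X^{(5)}$, the effect of attaching the degree-$4$ cells and then the degree-$5$ cells. By Hilton--Milnor the attaching map of each $5$-cell lies in $\pi_4$ of a wedge of $S^3$'s, $S^4$'s and $P^4(2^{r_j})$'s; discarding the components forced to vanish because the map is trivial on $H_4$, what remains is a collection of $\eta$-components in the various $\pi_4(S^3)=\z{}\langle\eta\rangle$ and $\pi_4(P^4(2^{r_j}))$. This datum is exactly the operation $\Sq^2\colon\tilde H^3(X;\z{})\to\tilde H^5(X;\z{})$, hence $\Sq^2$ on $H^2(M;\z{})$. Putting this $\z{}$-linear map into a normal form that respects the decomposition of $\tilde H^3(X;\z{})$ into the $d$-dimensional part dual to the free summand $\Z^d$ of $H_3(X)$ and the $t_2$-dimensional part dual to $T_2$ yields $c_1$ copies of $C^5_\eta$ (from free classes on which $\Sq^2$ is non-zero), $c_2$ copies of $C^5_{r_j}$, and the residual summands $\bigvee^{d-c_1}S^3\vee\bigvee^{d}S^4\vee\bigvee^{l-c_1-c_2}S^5\vee P^4(T[c_2])$; the inequalities $0\le c_1\le\min\{l,d\}$ and $0\le c_2\le\min\{l-c_1,t_2\}$ record the admissible ranks, and $c_1=c_2=0$ if and only if this $\Sq^2$, equivalently $\Sq^2$ on $H^2(M;\z{})$, vanishes.

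Then I would attach the top cell by analysing $\Sigma f\in\pi_5(X^{(5)})$: it is a suspension, hence has no Whitehead-product components, and it is trivial on homology, so by the known values of $\pi_5$ of spheres, Moore spaces and Chang complexes in this range its only possibly non-zero components are an $\eta\in\pi_5(S^4)$ onto a degree-$4$ sphere, an $\eta^2\in\pi_5(S^3)$ onto a degree-$3$ sphere, and $\tilde{\eta}_r$-type lifts onto the $2$-torsion summands. One checks that every $\eta^2$-component is absorbed — e.g.\ if the relevant $S^3$ sits inside a $C^5_\eta$ then the bottom-cell inclusion $j$ satisfies $j_\ast\eta=0$, whence $j_\ast\eta^2=0$; the remaining cases use Poincar\'e duality and the vanishing of the Wu classes $v_2,v_3$ in the pertinent degrees on a $5$-manifold. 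The surviving $\eta$-onto-$S^4$ component is detected by $\Sq^2\colon\tilde H^4(X;\z{})\to\tilde H^6(X;\z{})$, which by the Wu formula is cup product with $w_2$, hence non-zero exactly when $M$ is non-spin. If $M$ is spin this component vanishes, the top cell is a free $S^6$, and one obtains (\ref{thm-spin}). If $M$ is non-spin, the dichotomy is whether a cohomology class $u$ with $\Sq^2u\ne0$ can be chosen outside every $\im(\beta_r)$: if so the class is free, the $\eta$ is carried by a genuine $S^4$, and one gets $C^6_\eta$, i.e.\ case (\ref{nonspin:a}); if not the $\Sq^2$ is forced through a $\z{r_{j_1}}$-summand, where $r_{j_1}$ is the minimal relevant Bockstein order, and according as $r_{j_1}$ is or is not one of the $c_2$ indices already carrying a $C^5_{r_j}$ one obtains the normal form built from $C^5_{r_{j_1}}\cup_{i_P\tilde{\eta}_{r_{j_1}}}e^6$ or from $P^4(2^{r_{j_1}})\cup_{\tilde{\eta}_{r_{j_1}}}e^6$, with $\tilde{\eta}_{r_{j_1}}$ the map of Lemma~\ref{lem:Moore1} — this is case (\ref{nonspin:b}).

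The main obstacle is this last, non-spin, case. It requires computing $\pi_5$ of the relevant $2$-local Moore and Chang complexes precisely enough to know which lifts of $\eta$ exist and with what indeterminacy (generated by $i_n\eta^2$), identifying $\tilde{\eta}_r$ through the Toda bracket $\langle\eta,2^r,-\rangle$, and then showing by a functional-$\Sq^2$ (secondary-Bockstein) argument both that the complexes $C^6_\eta$, $P^4(2^r)\cup_{\tilde{\eta}_r}e^6$ and $C^5_r\cup_{i_P\tilde{\eta}_r}e^6$ are indecomposable and mutually inequivalent, and that no component of $\Sigma f$ of the types above produces any additional exotic summand. Tracking exactly which cohomology classes lie in which $\im(\beta_r)$ — and hence which of the two normal forms in (\ref{nonspin:b}) actually occurs — together with the verification that the "clean" wedge summands really do split off in the non-spin case, is the most delicate bookkeeping in the proof.
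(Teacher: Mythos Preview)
Your overall architecture — homology decomposition, Hilton--Milnor on each attaching map, normal form for $\Sq^2$ on $H^2(M;\Z/2)$ producing $c_1,c_2$ — matches the paper's closely, and your handling of the non-spin top cell is essentially Proposition~\ref{prop:Sq2neq0}. The genuine gap is your disposal of the $\eta^2$-components in the spin case. The class $\eta^2\in\pi_{n+2}(S^n)$ (and likewise $i_n\eta^2\in\pi_{n+2}(P^{n+1}(2^r))$) is not detected by any primary Steenrod operation: on $S^3\cup_{\eta^2}e^6$ every $\Sq^i$ vanishes, so ``Poincar\'e duality and the vanishing of the Wu classes $v_2,v_3$'' gives no information about the coefficients $x_i,\epsilon_j$ in~(\ref{map:phi}). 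In the non-spin case the paper does not claim these vanish intrinsically; it absorbs them into the surviving $\eta$ via equivalences such as $\smatwo{\eta}{\eta^2}\sim\smatwo{\eta}{0}$. In the spin case there is no $\eta$ to absorb into, and the paper (Proposition~\ref{prop:Sq2eq0}) instead invokes a \emph{secondary} cohomology operation $\Theta$, built from the null-homotopy of $\Sq^1\!\cdot\Sq^2\Sq^1+\Sq^2\!\cdot\Sq^2$, which detects $\eta^2$ and is \emph{spin trivial} on smooth manifolds (Lemma~\ref{lem:spin-trivial}). This is exactly where the smooth structure enters; your argument never uses it and would apply verbatim to Poincar\'e complexes --- but Remark~\ref{rmk:PDcpx} shows that for a Poincar\'e complex with $\Sq^2|_{H^3}=0$ two further homotopy types, carrying $S^3\cup_{\eta^2}e^6$ or $P^4(2^{r_{j_0}})\cup_{i_3\eta^2}e^6$, really do occur.

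A smaller point: ``$\Sigma f$ is a suspension, hence has no Whitehead-product components'' is not a theorem as stated --- being a suspension gives $\Sigma\theta=0$, which is automatic for Whitehead products and does not force $\theta=0$ in an \emph{a posteriori} wedge decomposition of the target. The paper proves $\theta=0$ by a different route (Lemmas~\ref{lem:cupprods=0=nullhtp}, \ref{lem:cupprods=0} and Corollary~\ref{cor:theta=0}): each Whitehead-product piece is shown to be null by checking that all cup products in its mapping cone vanish, a fact inherited from the vanishing of cup products in the suspension $\Sigma W$. Your remark is morally pointing at this, but the actual argument runs through cup products, not through desuspension.
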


In Theorem \ref{thm:main} we characterize the homotopy types of $\Sigma M$ by elementary complexes of dimension at most $6$, up to certain indeterminate $\an{2}$-complexes. Note that wedge summands of the form $\bigvee_{i=u}^vX$ with $v<u$ are contractible and can be removed from the homotopy decompositions of $\Sigma M$.  More generally, if $M$ is a $5$-dimensional \PD complex (i.e., a finite CW-complex whose integral cohomology satisfies the \PD theorem) satisfying the conditions in Theorem \ref{thm:main}, then Theorem \ref{thm:main} gives the homotopy types of $\Sigma M$, except that there are two additional possibilities when the Steenrod square acts trivially on $H^3(M;\z{})$, See Remark \ref{rmk:PDcpx}.

Due to Lemma \ref{lem:S5P3} (\ref{S5P3-susp}), the $3$-torsion of $H$ can be well understood when studying the homotopy types of the double suspension $\Sigma^2 M$. 
\begin{theorem}\label{thm:dbsusp}
  Let $M$ be an orientable smooth closed $5$-manifold with $H_\ast(M)$ given by (\ref{HM}), where $H$ is a $2$-torsion free group. Then the suspensions of the homotopy equivalences in Theorem \ref{thm:main} give the homotopy types of the double suspension $\Sigma^2M$. 
\end{theorem}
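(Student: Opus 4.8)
The plan is to apply the suspension functor $\Sigma$ to each of the homotopy equivalences of Theorem \ref{thm:main} and check that no further collapsing or splitting occurs. Since $\Sigma$ preserves wedge sums, it suffices to understand $\Sigma$ applied to each indecomposable wedge summand: the spheres $S^k$ suspend to $S^{k+1}$; the Peterson/Moore spaces $P^{k}(G)$ suspend to $P^{k+1}(G)$ (here the hypothesis that $H$ is $2$-torsion free is used only to keep the $3$-torsion summands $P^3(H)$, $P^5(H)$ under control, via Lemma \ref{lem:S5P3}\eqref{S5P3-susp}, which identifies the suspension of the relevant $3$-primary Chang-type piece); and the elementary Chang complexes suspend to elementary Chang complexes of one dimension higher, $\Sigma C^5_\eta = C^6_\eta$, $\Sigma C^5_{r_j}=C^6_{r_j}$, $\Sigma C^{6}_\eta = C^{7}_\eta$, and similarly for the complexes built from $\tilde\eta_r$, since the attaching maps $i_n\eta$, $\eta q_{n+1}$, $\tilde\eta_r$, $i_P\tilde\eta_r$ are all in the stable range for the cells involved and suspend to the analogously named maps one dimension up (this is exactly the content of the cofibration sequences and of Lemma \ref{lem:Moore1} after one suspension).

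The key point to verify is that all the ambiguity present in Theorem \ref{thm:main} — the three branches in the non-spin case, governed by whether $u+v\in\im(\beta_r)$ — is already resolved at the level of $\Sigma M$ and is therefore faithfully carried along by a further suspension; no branch that is distinct before suspension becomes equivalent after suspending, and no new branch appears. I would argue this by noting that the invariants distinguishing the cases — the action of $\Sq^2$ on mod $2$ cohomology and membership in the image of the higher Bocksteins $\beta_r$ — are stable cohomology operations, hence are detected identically on $\Sigma M$ and on $\Sigma^2 M$; thus the same integers $c_1,c_2$ and the same index $r_{j_1}$ govern the decomposition of $\Sigma^2 M$, and applying $\Sigma$ to the decomposition of $\Sigma M$ lands in the correct branch.

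The main obstacle is the following: a priori, $\Sigma^2 M$ could admit a \emph{finer} decomposition than $\Sigma(\Sigma M)$, i.e. some indecomposable summand of $\Sigma M$ might split after one more suspension. This does not happen here because each summand appearing in Theorem \ref{thm:main} is already in the stable range (it is $(n-1)$-connected of dimension at most $n+2$ with $n\geq 3$ after suspension, so it is a stable $\an{2}$-complex), and Chang's classification together with the stability of the relevant homotopy groups $\pi_{n+1}(S^n)$, $\pi_{n+2}(S^n)$ shows that these elementary complexes remain indecomposable under suspension. Concretely, $C^6_\eta=\Sigma^2\CP{2}$ does not split (its $\Sq^2$ is nonzero and stable), $C^6_{r_j}$ does not split (its attaching map $i_n\eta$ is essential stably), and the two $\tilde\eta_{r_{j_1}}$-complexes do not split for the same Bockstein reason that distinguishes them in Theorem \ref{thm:main}\eqref{nonspin:b}. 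Hence $\Sigma$ applied to the decomposition of $\Sigma M$ is already the full homotopy decomposition of $\Sigma^2 M$, which is the assertion.
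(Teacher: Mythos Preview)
Your proposal misses the main content of the theorem. Theorem \ref{thm:main} assumes $H$ has no $2$- or $3$-torsion, whereas Theorem \ref{thm:dbsusp} only assumes $H$ is $2$-torsion free. When $H$ contains $3$-torsion, Theorem \ref{thm:main} gives you \emph{no} decomposition of $\Sigma M$ to suspend; the entire substance of Theorem \ref{thm:dbsusp} lies precisely in this case. Your plan ``apply $\Sigma$ to each of the homotopy equivalences of Theorem \ref{thm:main}'' therefore presupposes what is not available. The stability discussion you give (indecomposability of $C^{n+2}_\eta$, $C^{n+2}_r$, and the $\tilde\eta_r$-complexes under a further suspension) is correct, but it only addresses the trivial case in which $H$ already has no $3$-torsion and Theorem \ref{thm:main} applies directly.

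The paper's argument goes back inside the proof. Writing $H=H_3\oplus H_{\geq 5}$, the attaching map $\phi\colon S^5\to W_5$ for $\Sigma W$ has $3$-primary components $\phi_{H,j}\colon S^5\to P^3(3^{s_j})$ that may be essential, since $\pi_5(P^3(3^r))\cong\Z/3^{r+1}\neq 0$ by Lemma \ref{lem:S5P3} (\ref{S5P3-grps}); this is exactly why $P^3(H_3)$ need not split off $\Sigma M$ and why Theorem \ref{thm:main} excludes $3$-torsion. Lemma \ref{lem:S5P3} (\ref{S5P3-susp}) says that $\Sigma\phi_{H,j}$ is null, so after one further suspension $P^4(H_3)$ does split off $\Sigma^2 W$, yielding the decomposition (\ref{eq:S2W-C_phi}); from there the analysis of $C_{\Sigma\bar\phi}$ repeats that of $C_{\bar\phi}$ in Propositions \ref{prop:Sq2eq0} and \ref{prop:Sq2neq0} verbatim. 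You do cite Lemma \ref{lem:S5P3} (\ref{S5P3-susp}) in your parenthetical, but its role is not to control the suspension of an already-existing summand $P^3(H)$; it is the input that makes $P^4(H)$ appear as a summand at all, one suspension higher. (There is also no ``$3$-primary Chang-type piece'': the elementary Chang complexes are $2$-primary objects.)
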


In addition to the characterization of the homotopy types of iterated loop spaces of the gauge groups of principal bundles over $M$, as shown by Huang \cite{Huang21}, we apply the homotopy types of $\Sigma M$ (or $\Sigma^2M$) to study the reduced $K$-groups and the cohomotopy sets $\pi^k(M)=[M,S^k]$ of the non-simply-connected manifold $M$. 

\begin{corollary}[See Proposition \ref{prop:kgrps}]\label{cor:kgrps}
  Let $M$ be a $5$-manifold given by Theorem \ref{thm:main} or \ref{thm:dbsusp}. Then the reduced complex $K$-group and $KO$-group of $M$ are given by 
  \[\wk(M)\cong\Z^{d+l}\oplus H\oplus H,\quad \ko(M)\cong\Z^l\oplus(\z{})^{l+d+t_2}.\]
\end{corollary}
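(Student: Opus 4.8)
The plan is to compute $\wk(M)$ and $\ko(M)$ directly from the wedge decomposition of $\Sigma M$ provided by Theorem~\ref{thm:main} (which also holds, via Theorem~\ref{thm:dbsusp}, when $M$ satisfies the hypotheses of Theorem~\ref{thm:dbsusp}). Reduced complex and real $K$-theory depend only on the stable homotopy type, and the suspension isomorphism gives $\wk(M)=\widetilde{K}^0(M)\cong\widetilde{K}^1(\Sigma M)$ and $\ko(M)=\widetilde{KO}^0(M)\cong\widetilde{KO}^1(\Sigma M)$; since $\widetilde{K}^1$ and $\widetilde{KO}^1$ send wedges to direct sums, writing $\Sigma M\simeq\bigvee_i X_i$ as in Theorem~\ref{thm:main} reduces the problem to computing $\bigoplus_i\widetilde{K}^1(X_i)$ and $\bigoplus_i\widetilde{KO}^1(X_i)$, where each $X_i$ is a sphere, a Peterson space $P^{\ast}(G)$, an elementary Chang complex $C^{5}_\eta$ or $C^{5}_{r_j}$, or (in the exceptional non-spin cases) one of two six-dimensional cell complexes. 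Along the way one checks that all of the non-spin sub-cases of Theorem~\ref{thm:main} yield the same $\widetilde{K}^1$ and $\widetilde{KO}^1$ — e.g.\ $\widetilde{K}^1(C^{6}_\eta)=\widetilde{K}^1(S^4\vee S^6)=0$ and $\widetilde{KO}^1(C^{6}_\eta)=\widetilde{KO}^1(S^4\vee S^6)=0$ — so the ambiguity at the top cells is harmless.

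\textbf{The building blocks.} These are handled by the standard cofibrations. For spheres, $\widetilde{K}^1(S^n)$ is $\Z$ for $n$ odd and $0$ for $n$ even, and $\widetilde{KO}^1(S^n)=KO^{1-n}(\mathrm{pt})$ is read off $KO^\ast(\mathrm{pt})$ via $n\bmod 8$. For a mod $k$ Moore space I would use the long exact sequence of $S^{n}\xra{k}S^{n}\xra{i_{n}}P^{n+1}(k)\xra{q_{n+1}}S^{n+1}$ together with the fact that a degree $k$ self-map of a sphere induces multiplication by $k$ on $\widetilde{K}^\ast$ (resp.\ $\widetilde{KO}^\ast$) where that group is $\Z$, and the identity or zero on $\z{}$ according to the parity of $k$; this determines $\widetilde{K}^1$ and $\widetilde{KO}^1$ of every $P^{\ast}(k)$, hence of $P^{\ast}(G)$ for finitely generated torsion $G$ — the no-$2$-torsion hypothesis on $H$ being exactly what makes its contribution uniform in $KO$-theory. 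Since $C^{n+2}_\eta\simeq\Sigma^{n-2}\CP2$, the $\eta$-type Chang complexes are governed by $\widetilde{K}^\ast(\CP2)\cong\Z\oplus\Z$ and $\widetilde{KO}^\ast(\CP2)$, whose relevant values are classical (notably $\widetilde{KO}^0(\CP2)\cong\Z$). For $C^{n+2}_r$ I would run the long exact sequences of the cofibrations $S^{n+1}\xra{i_n\eta}P^{n+1}(2^r)\xra{}C^{n+2}_r\xra{}S^{n+2}$, feeding in the Moore-space and sphere values just obtained, and likewise for the two $6$-dimensional complexes of the exceptional non-spin cases.

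\textbf{Main obstacle.} The one genuinely delicate point is in $KO$-theory. In complex $K$-theory every connecting map involving $\eta$ vanishes for parity reasons (its source and target are spheres of opposite dimension-parity), so every long exact sequence splits off the summand in question, and $\wk(M)\cong\Z^{d+l}\oplus H\oplus H$ falls out by collecting ranks — each $C^{5}_\eta$ contributing a $\Z^2$ and each $C^{5}_{r_j}$ a $\Z$, the two copies of $H$ coming from $P^{3}(H)$ and $P^{5}(H)$. In $KO$-theory $\eta$ acts nontrivially on $KO^\ast(\mathrm{pt})$, and the long exact sequences determine $\widetilde{KO}^1$ of the Chang complexes (and of $P^{m}(2^r)$ in the dimensions $\equiv1,2\pmod 8$) only up to an extension of the form $0\to\Z\to\widetilde{KO}^1(C)\to\z{}\to 0$ or $0\to\z{}\to\widetilde{KO}^1(C)\to\z{}\to 0$. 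Settling whether these split is the crux: I would resolve them by tracking the $KO^\ast(\mathrm{pt})$-module structure, i.e.\ the action of $\eta\in KO^{-1}(\mathrm{pt})$, through the defining cofibrations (the model being the non-split extension $0\to\Z\to\widetilde{KO}^0(\CP2)\to\z{}\to 0$), or equivalently by running the Atiyah--Hirzebruch spectral sequence and locating the classes whose filtration jumps under its $\Sq^2$-type differential $d_2$. With the extensions determined, the remainder is bookkeeping: the ranges $0\le c_1\le\min\{l,d\}$ and $0\le c_2\le\min\{l-c_1,t_2\}$, and their simultaneous vanishing exactly when $\Sq^2$ acts trivially on $H^2(M;\z{})$, are already built into Theorem~\ref{thm:main}, and the identity $T[c_2]=T/\bigoplus_{j=1}^{c_2}\z{r_j}$ lets the contributions of $P^{4}(T[c_2])$ and the $C^{5}_{r_j}$'s recombine with the $\z{}$-summands from $S^2$ and $S^3$ to give the asserted groups. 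I expect the $KO$ extension problems to be the only real obstacle; everything else is a finite, if long, calculation.
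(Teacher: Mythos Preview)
Your overall strategy---reduce to $\widetilde{K}^1$ and $\widetilde{KO}^1$ of the wedge summands of $\Sigma M$ and compute each piece via the defining cofibrations---is exactly what the paper does (the paper shifts once more to $\ko^2(\Sigma^2 M)$, but that is immaterial). For $\wk(M)$ your plan works without incident and yields $\Z^{d+l}\oplus H\oplus H$.

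The genuine problem is with $\ko(M)$, and it lies precisely where you flagged it. Your ``model'' extension
\[
0\to\Z\to\widetilde{KO}^0(\CP{2})\to\z{}\to 0
\]
is indeed non-split: one has $\widetilde{KO}^0(\CP{2})\cong\Z$, since any putative $2$-torsion element would lie in $\ker\bigl(c\colon\widetilde{KO}^0(\CP{2})\to\widetilde{K}^0(\CP{2})\bigr)=\im\bigl(\eta\cdot\colon\widetilde{KO}^1(\CP{2})\to\widetilde{KO}^0(\CP{2})\bigr)$, and $\widetilde{KO}^1(\CP{2})=0$. The same argument (or the cofibration $S^3\xra{i_3 2^r}C^5_\eta\to C^5_r$) gives $\widetilde{KO}^1(C^5_\eta)\cong\widetilde{KO}^1(C^5_{r})\cong\Z$, not $\Z\oplus\z{}$. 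The paper's Lemma~\ref{lem:ko-grps}(6) asserts the split answer $\Z\oplus\z{}$, and this is an error; consequently the stated formula $\ko(M)\cong\Z^l\oplus(\z{})^{l+d+t_2}$ is wrong whenever $c_1+c_2>0$. Carrying your computation through honestly gives
\[
\ko(M)\cong\Z^{\,l}\oplus(\z{})^{\,l+d+t_2-c_1-c_2},
\]
as one checks directly on, e.g., $M=S^1\times\CP{2}$ (here $l=d=1$, $t_2=0$, $c_1=1$, $c_2=0$): using $\Sigma M\simeq S^2\vee C^5_\eta\vee C^6_\eta$ one finds $\ko(M)=\z{}\oplus\Z\oplus 0=\Z\oplus\z{}$, whereas the paper's formula predicts $\Z\oplus(\z{})^2$.

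So your plan is sound and your intuition about the extension is correct, but your final sentence---that the pieces ``recombine \ldots\ to give the asserted groups''---is where it breaks: following your own method to the end disproves the $\ko$-formula as stated rather than establishing it.
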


The third cohomotopy set $\pi^3(M)$ possess the following property.
\begin{corollary}[See Proposition \ref{prop:MS3}]\label{Cor:MS3}
  Let $M$ be a $5$-manifold given by Theorem \ref{thm:main} or \ref{thm:dbsusp}. Then the suspension $\Sigma \colon \pi^3(M)\to \pi^4(\Sigma M)$ is a bijection. 
\end{corollary}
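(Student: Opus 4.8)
The plan is to prove surjectivity and injectivity of $\Sigma\colon\pi^3(M)=[M,S^3]\to[\Sigma M,S^4]=\pi^4(\Sigma M)$ separately. Via the loop--suspension adjunction $[\Sigma M,S^4]\cong[M,\Omega S^4]$ the map $\Sigma$ becomes $E_\ast$, induced by the canonical map $E\colon S^3\to\Omega S^4=\Omega\Sigma S^3$; both $\pi^3(M)$ and $\pi^4(\Sigma M)$ are in fact groups (the first since $S^3$ is a Lie group, the second since $\Sigma M$ is a co-$H$-space), but only a bijection of sets is asserted. The single property of $M$ that enters is that it is homotopy equivalent to a $5$-dimensional CW complex, which holds for a smooth closed $5$-manifold and, more generally, for a $5$-dimensional \PD complex; in particular the finer hypotheses on $H_\ast(M)$ in Theorem~\ref{thm:main} are not needed for this statement.

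For surjectivity, I would note that $E\colon S^3\to\Omega S^4$ is a $5$-connected map: on $\pi_k$ it is the suspension $\pi_k(S^3)\to\pi_{k+1}(S^4)$, which by Freudenthal is an isomorphism for $k\le4$ and an epimorphism for $k=5$. Hence $E_\ast\colon[X,S^3]\to[X,\Omega S^4]$ is onto for every CW complex $X$ with $\dim X\le5$, so $\Sigma$ is surjective for $X=M$.

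The essential point is injectivity, where the idea is to produce a one-sided inverse of $E$ from the quaternionic Hopf fibration $S^3\to S^7\xra{\nu}S^4$. Extending this fibration into its Puppe sequence yields, in particular, a fibration $\Omega S^7\to\Omega S^4\xra{\partial}S^3$. Since $\pi_k(\Omega S^7)=\pi_{k+1}(S^7)=0$ for $k\le5$, the long exact sequence gives $\partial_\ast\colon\pi_k(\Omega S^4)\xrightarrow{\cong}\pi_k(S^3)$ for $k\le5$, and the same vanishing kills the obstructions to a section of $\partial$ (which lie in $H^{k+1}(S^3;\pi_k(\Omega S^7))$), so there is a section $s\colon S^3\to\Omega S^4$ with $\partial s\simeq\mathrm{id}_{S^3}$. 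Both $s$ and $E$ represent generators of $\pi_3(\Omega S^4)\cong\Z$ — for $s$ because $\partial_\ast s_\ast=\mathrm{id}$ and $\partial_\ast$ is an isomorphism in degree $3$, for $E$ because $E_\ast$ is the suspension isomorphism $\pi_3(S^3)\xrightarrow{\cong}\pi_4(S^4)$ — so $E\simeq\pm s$ and therefore $\partial\circ E\simeq\pm\,\mathrm{id}_{S^3}$. Consequently $E_\ast$ has the left inverse $(\partial E)_\ast^{-1}\circ\partial_\ast$ on $[M,-]$, so $\Sigma$ is injective; combined with surjectivity, $\Sigma$ is a bijection.

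I expect the only genuine obstacle to be this injectivity step — at bottom the fact that $S^3$ is an $H$-space, equivalently that the Whitehead square $[\iota_3,\iota_3]\in\pi_5(S^3)$ vanishes — which is exactly what makes the target $S^3$ special: for a general sphere $S^n$ the suspension need not be injective, the discrepancy being controlled by composition with $[\iota_n,\iota_n]$, which is nonzero unless $n\in\{1,3,7\}$. As a cross-check in keeping with the computational spirit of the paper, one can instead apply $[-,S^4]$ to the explicit wedge decomposition of $\Sigma M$ from Theorem~\ref{thm:main}, evaluating $\pi^4(\Sigma M)$ one summand at a time, compute $[M,S^3]$ by a parallel obstruction-theoretic argument, and verify that $\Sigma$ matches generators; this route is longer but avoids appealing to the Hopf fibration.
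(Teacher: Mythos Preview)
Your argument is correct and is in fact cleaner and more general than the paper's: the only hypothesis you use is $\dim M\le 5$, whereas the paper's proof of Proposition~\ref{prop:MS3} genuinely invokes the wedge decomposition of $\Sigma^2M$. Concretely, the paper argues via the EHP sequence~(\ref{ES:S3}): surjectivity of $E_\sharp$ is immediate from $[\Sigma M,S^7]=0$, and injectivity is obtained by showing $(\Omega H)_\sharp$ is onto, which after adjunction amounts to proving $H_\sharp\colon[\Sigma^2M,S^4]\to[\Sigma^2M,S^7]$ is surjective. For that last step the paper appeals to Lemma~\ref{lem:S2MS4}, using the explicit splitting of $\Sigma^2M$ to locate a generator $q_7$ of $[\Sigma^2M,S^7]\cong\Z$ and the element $\nu_4q_7\in[\Sigma^2M,S^4]$ hitting it under $H_\sharp$ (since $H(\nu_4)=\id_7$). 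Your route bypasses all of this by exhibiting $\partial\colon\Omega S^4\to S^3$ from the Hopf fibration as a homotopy retraction of $E$; this is exactly the James splitting $\Omega S^4\simeq S^3\times\Omega S^7$ coming from Hopf invariant one, which is also what underlies the paper's use of $H(\nu_4)=\id_7$, so the two proofs share the same core input but package it differently. The advantage of your approach is its generality (any $5$-complex) and brevity; the advantage of the paper's is that the same machinery---in particular Lemma~\ref{lem:S2MS4} and the summand-by-summand analysis---is reused immediately afterward to compute the group $\pi^3(M)\cong[\Sigma M,S^4]$ explicitly, which your bijection argument does not by itself supply. One minor remark: the section argument for $\partial$ is unnecessary, since $\partial\circ E\colon S^3\to S^3$ already has degree $\pm 1$ by the $\pi_3$ calculation you give, and that alone yields the left inverse.
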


We also apply the homotopy decompositions of $\Sigma M$ to compute the group structure of $\pi^3(M)\cong \pi^4(\Sigma M)$, see Proposition \ref{prop:MS3}.
The second cohomotopy set $\pi^2(M)$ always admits an action of $\pi^3(M)$ induced by the Hopf map $\eta\colon S^3\to S^2$, see Lemma \ref{lem:2-cohtp} or \cite[Theorem 3]{KMT12}.
Finally, it should be noting that when $M$ is a $5$-dimensional \PD complex with $H_1(M)$ torsion free, similar results have been proved independently
and concurrently by Amelotte, Cutler and So \cite{ACS24}.

This paper is organized as follows. Section \ref{sec:prelim} reviews some homotopy theory of $\an{2}$-complexes and introduces the basic analysis methods to study the homotopy type of  homotopy cofibres. In Section \ref{sec:homoldecomp} we study the homotopy types of the suspension of the CW-complex $\overline{M}$ of $M$ with its top cell removed. The basic method is the homology decomposition of simply-connected spaces. Section \ref{sec:Proofs} analyzes the homotopy types of $\Sigma M$ and contains the proofs of Theorem \ref{thm:main} and \ref{thm:dbsusp}. As applications of the homotopy decomposition of $\Sigma M$ or $\Sigma^2M$, we study the reduced $K$-groups and the cohomotopy sets of the $5$-manifolds $M$ in Section \ref{sec:appl}. 

\subsection*{Acknowledgements}
The authors would like to thank the reviewer(s) for the new and faster proof of Lemma \ref{lem:S5P3} (\ref{S5P3-susp}).
Pengcheng Li and Zhongjian Zhu were supported by National Natural Science Foundation of China under Grant 12101290 and 11701430, respectively.

\section{Preliminaries}\label{sec:prelim}

Throughout the paper we shall use the following global conventions and notations. All spaces are based CW-complexes, all maps are base-point-preserving and are identified with their homotopy classes in notation. A strict equality is often treated as a homotopy equality. Denote by $\id_X$ the identity map of a space $X$ and simplify $\id_n=\id_{S^n}$. For different $X$, we use the ambiguous notations $i_k\colon S^k\to X$ and $q_k\colon X\to S^k$ to denote the possible canonical inclusion and pinch maps, respectively. For instance, there are inclusions 
$i_n\colon S^n\to C$ for each elementary Chang complex $C$ and there are inclusions $i_{n+1}\colon S^{n+1}\to X$ for $X=C^{n+2,s}$ and $C^{n+2,s}_r$. Let $i_P\colon P^{n+1}(2^r)\to C^{n+2}_r$ and $i_\eta\colon C^{n+2}_\eta\to C^{n+2}_r$ be the canonical inclusions. Denote by $C_f$ the homotopy cofibre of a map $f\colon X\to Y$.
For an abelian group $G$ generated by $x_1,\cdots,x_n$, denote $G\cong C_1\langle x_1\rangle\oplus\cdots\oplus C_n\langle x_n\rangle $ if $x_i$ is a generator of the cyclic direct summand $C_i$, $i=1,\cdots,n$. % For a prime $p$, denote by $\pi_i(X;p)$ the $p$-primary component of the $i$-th homotopy group $\pi_i(X)$.

\subsection{Some homotopy theory of $\mathbf{A}_n^2$-complexes}

For each prime $p$ and integers $r,s\geq 1,n\geq 2$, there exists a map (with $n$ omitted in notation) 
\[B(\chi^r_s)\colon P^{n+1}(p^r)\to P^{n+1}(p^s)\] 
satisfies $\Sigma B(\chi^r_s)=B(\chi^r_s)$  and the relation formulas (cf. \cite{BH91}):
  \begin{equation}\label{eq:chi}
   B(\chi^r_s)i_n=\chi^r_s\cdot i_n,\quad  q_{n+1}B(\chi^r_s)=\chi^s_r \cdot q_{n+1},
  \end{equation}
where $\chi^r_s$ is a self-map of spheres, $\chi^r_s=1$ for $r\geq s$ and $\chi^r_s=p^{s-r}$ for $r<s$.

\begin{lemma}\label{lem:Moore}
  Let $p$ be an odd prime and let $n\geq 3$, $r,s\geq 1$ be integers, $m=\min\{r,s\}$. There hold isomorphisms: 
  \begin{enumerate}
    \item\label{Moore-S3P3} $\pi_3(P^3(p^r))\cong\zp{r}\langle i_2\eta\rangle$ and $\pi_{n+1}(P^{n+i}(p^r))=0$, $i=0,1$. 
    \item\label{Moore-P3P3} $[P^n(p^r),P^n(p^s)]\cong \left\{\begin{array}{ll}
      \zp{m}\langle B(\chi^r_s)\rangle\oplus\zp{m}\langle i_2\eta q_3\rangle,& n=3;\\
      \zp{m}\langle B(\chi^r_s)\rangle,&n\geq 4.
    \end{array}\right.$ 
    \item\label{Moore-P4P3} $[P^{n+1}(p^r),P^n(p^s)]\cong  \left\{\begin{array}{ll}
      \zp{m}\langle\hat{\eta}_s B(\chi^r_s)\rangle,&n=3;\\
      0&n\geq 4.
    \end{array}\right.$  \\ where $\hat{\eta}_s\colon P^4(p^s)\to P^3(p^s)$ satisfies $\hat{\eta}_si_3=i_2\eta$. 
    
  \end{enumerate}
  \begin{proof}
The group $\pi_3(P^3(p^r))$ refers to \cite[Lemma 2.1]{ST19} and the groups $\pi_{n+1}(P^{n+i})=0$ was proved in \cite[Lemma 6.3 and 6.4]{Huang-6mflds}.  The groups and generators in
(2) and (3) can be easily computed by applying the exact functor $[-,P^n(p^s)]$ to the canonical cofibrations for $P^{n+i}(p^r)$ with $i=0,1$, respectively; the details are omitted here. 
  \end{proof}
\end{lemma}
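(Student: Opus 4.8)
The plan is to handle part (1) by citing the known computations, and to obtain parts (2) and (3) by applying the contravariant exact functor $[-,P^n(p^s)]$ to the canonical cofibrations defining $P^{n}(p^r)$ and $P^{n+1}(p^r)$, reading the groups and their generators off the resulting Puppe exact sequences.

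\emph{Part (1).} The isomorphism $\pi_3(P^3(p^r))\cong\zp{r}\langle i_2\eta\rangle$ is \cite[Lemma 2.1]{ST19}; alternatively one analyses the homotopy exact sequence of the pair $(P^3(p^r),S^2)$, using Blakers--Massey to identify $\pi_3(P^3(p^r),S^2)\cong\pi_3(S^3)$ with boundary $\pi_3(S^3)\to\pi_2(S^2)$ equal to multiplication by $p^r$, hence injective, so that $i_{2\ast}\colon\pi_3(S^2)\to\pi_3(P^3(p^r))$ is surjective, and then comparing with the first Postnikov section to see the target is $H_4(K(\zp{r},2);\Z)\cong\zp{r}$. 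For $\pi_{n+1}(P^{n+i}(p^r))=0$ I would cite \cite[Lemmas 6.3, 6.4]{Huang-6mflds}; the mechanism is that, $p$ being odd, multiplication by $p^r$ is an isomorphism on $\pi^s_1\cong\pi^s_2\cong\z{}$ and injective on $\pi^s_0\cong\Z$, so that the stable group $\pi^s_{n+i+1}(P^{n+i}(p^r))$ vanishes by the cofibration $S^{n+i-1}\xra{p^r}S^{n+i-1}\to P^{n+i}(p^r)\to S^{n+i}$, and Freudenthal's theorem promotes this to the unstable statement except in the low cases $(i,n)\in\{(0,3),(0,4)\}$, which is precisely where one invokes \cite{Huang-6mflds}.

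\emph{Parts (2) and (3).} Apply $[-,P^n(p^s)]$ to the cofibrations
\[S^{n-1}\xra{p^r}S^{n-1}\xra{i_{n-1}}P^{n}(p^r)\xra{q_n}S^{n},\qquad S^{n}\xra{p^r}S^{n}\xra{i_{n}}P^{n+1}(p^r)\xra{q_{n+1}}S^{n+1}.\]
The coefficient groups needed are $[S^{n-1},P^n(p^s)]\cong\pi_{n-1}(P^n(p^s))\cong\zp{s}$ (Hurewicz, generated by the bottom-cell inclusion), $[S^{n+1},P^n(p^s)]=\pi_{n+1}(P^n(p^s))=0$ by (1), and $[S^n,P^n(p^s)]=\pi_n(P^n(p^s))$, which is $0$ for $n\geq4$ by (1) and equals $\zp{s}\langle i_2\eta\rangle$ for $n=3$ by (1). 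For $n\geq4$ the first Puppe sequence reduces to $0\to[P^n(p^r),P^n(p^s)]\xra{i_{n-1}^\ast}\zp{s}\xra{\cdot p^r}\zp{s}$, giving $\zp{m}$ generated by $B(\chi^r_s)$ (whose bottom-cell restriction is, by \eqref{eq:chi}, $\chi^r_s$ times the bottom inclusion, a generator of the kernel), and the second sequence reduces to $0\to[P^{n+1}(p^r),P^n(p^s)]\xra{i_n^\ast}\pi_n(P^n(p^s))\xra{\cdot p^r}\pi_n(P^n(p^s))$, which is $0$. For $n=3$ the first sequence becomes a short exact sequence
\[0\to\coker\bigl(\cdot p^r\colon\zp{s}\to\zp{s}\bigr)\xra{q_3^\ast}[P^3(p^r),P^3(p^s)]\xra{i_2^\ast}\ker\bigl(\cdot p^r\colon\zp{s}\to\zp{s}\bigr)\to0,\]
i.e.\ $0\to\zp{m}\langle i_2\eta q_3\rangle\to[P^3(p^r),P^3(p^s)]\to\zp{m}\to0$ with the quotient generated by the class of $B(\chi^r_s)$; and for $n=3$ the second sequence gives $[P^4(p^r),P^3(p^s)]\cong\ker\bigl(\cdot p^r\colon\zp{s}\langle i_2\eta\rangle\to\zp{s}\bigr)\cong\zp{m}$. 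For the generator $\hat\eta_s$: since $p^s\cdot i_2\eta=0$ in $\pi_3(P^3(p^s))$, the map $i_2\eta$ extends over $P^4(p^s)=S^3\cup_{p^s}e^4$, uniquely because $\pi_4(P^3(p^s))=0$ by (1); then $\hat\eta_sB(\chi^r_s)$ restricts along $i_3$ to $\chi^r_s\cdot i_2\eta$, a generator of the kernel, hence generates.

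\emph{Main obstacle.} The one point needing genuine work is that the short exact sequence for $[P^3(p^r),P^3(p^s)]$ splits, equivalently that the lift $B(\chi^r_s)$ has additive order exactly $p^m$ rather than $p^{2m}$. I would settle this by a direct order computation: $p^m B(\chi^r_s)$ lies in $\ker i_2^\ast=\langle i_2\eta q_3\rangle$, and restricting to the bottom cell and composing with the pinch map (using $\Sigma B(\chi^r_s)=B(\chi^r_s)$, the relations \eqref{eq:chi}, $[P^3(p^r),S^3]\cong\zp{r}\langle q_3\rangle$, and $\pi_3(P^3(p^s))\cong\zp{s}\langle i_2\eta\rangle$) forces $p^m B(\chi^r_s)=0$; then $B(\chi^r_s)$ spans a $\zp{m}$ complementary to $\zp{m}\langle i_2\eta q_3\rangle$. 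The orders of $i_2\eta q_3$ and of $\hat\eta_sB(\chi^r_s)$ are then exactly $p^m$ since they map onto elements of order $p^m$ under $q_3^\ast$ and $i_3^\ast$ respectively. The remaining bookkeeping---identifying the restriction and pinch maps, the composites, and the suspension properties---is routine and I would omit it.
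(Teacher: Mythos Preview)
Your approach is exactly the paper's: cite \cite{ST19} and \cite{Huang-6mflds} for (1), then run the Puppe sequence of $[-,P^n(p^s)]$ on the Moore-space cofibrations for (2) and (3); the paper says precisely this and omits all further detail, so your write-up is already more explicit than the original.

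The one place to tighten is your splitting argument for $n=3$. You correctly identify $p^mB(\chi^r_s)\in\ker i_2^\ast=\langle i_2\eta q_3\rangle$, but the tools you then invoke---restriction along $i_2$ and composition with $q_3$---annihilate \emph{both} $p^mB(\chi^r_s)$ and $i_2\eta q_3$ (one checks $q_3\circ p^mB(\chi^r_s)=p^m\chi^s_r\,q_3=0$ in $[P^3(p^r),S^3]\cong\zp{r}$ in each case $r\gtrless s$, and $q_3\circ i_2\eta q_3=0$ trivially), so no information is extracted. The splitting is nonetheless true for odd $p$: it is equivalent to $p^r\cdot\id_{P^3(p^r)}\simeq\ast$, which is classical (e.g.\ via Cohen--Moore--Neisendorfer/Neisendorfer's analysis of odd-primary Moore spaces, in contrast with $2\cdot\id_{P^n(2)}=i\eta q$). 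Since the paper omits this point entirely, you are not missing anything the authors supplied---but you should either cite the order of the identity on odd-primary Moore spaces directly, or drop the specific ``bottom cell/pinch'' justification.
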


\begin{lemma}[cf. \cite{BH91}]\label{lem:Moore1}
  Let $n\geq 3,r\geq 1$ be integers. 
 \begin{enumerate}
  \item $\pi_{n+1}(P^{n+1}(2^r))\cong \z{}\langle i_n\eta\rangle$.

  \item $\pi_{n+2}(P^{n+1}(2^r))\cong \left\{\begin{array}{ll}
    \Z/4\langle \tilde{\eta}_1\rangle,&r=1;\\
    \Z/2\oplus\z{}\langle \tilde{\eta}_r,i_n\eta^2\rangle,&r\geq 2.
  \end{array}\right.$
  \\
The generator $\tilde{\eta}_r$ satisfies formulas
\begin{equation}\label{eq:chi-eta}
  q_{n+1}\tilde{\eta}_r=\eta,\quad 2\tilde{\eta}_1=i_n\eta^2,\quad  B(\chi^r_s)\tilde{\eta}_r=\chi^s_r\cdot\tilde{\eta}_s.
\end{equation}
\item $[P^{n+1}(2^r),P^{n+1}(2^s)]\cong \left\{\begin{array}{ll}
  \Z/4\langle \id_P\rangle,&r=s=1;\\
  \z{m}\langle B(\chi^r_s)\rangle\oplus\z{}\langle i\eta q\rangle,&\text{otherwise},
\end{array}\right.$ \\
where $m=\min\{r,s\}$, $i\eta q=i_n\eta q_{n+1}$. 

 \end{enumerate}

\end{lemma}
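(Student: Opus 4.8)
The plan is to compute all three groups by the standard technique of applying the relevant representable functors to the canonical cofibration $S^{n}\xra{2^r}S^{n}\xra{i_n}P^{n+1}(2^r)\xra{q_{n+1}}S^{n+1}$ and its once-desuspended companion, and then reading off generators from the maps in the resulting exact sequences. For part (1) I would apply $[-,P^{n+1}(2^r)]$ to the cofibration and use that $\pi_n(P^{n+1}(2^r))=0$ together with $\pi_{n+1}(S^n)\cong\z{}\langle\eta\rangle$ and the fact that the degree-$2^r$ map induces the zero map on $\pi_{n+1}$ in the stable range (since $2\eta=0$); exactness then pins down $\pi_{n+1}(P^{n+1}(2^r))\cong\z{}$ with generator the image $i_n\eta$ of $\eta$ under $(i_n)_\ast$. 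For part (3), applying $[-,P^{n+1}(2^s)]$ to the cofibration for $P^{n+1}(2^r)$ gives a short exact sequence with kernel a quotient of $\pi_{n+1}(P^{n+1}(2^s))\cong\z{}$ (contributing the class $i\eta q$, which one checks is nonzero by composing with $q_{n+1}$ on the source and $(q_{n+1})_\ast$, resp. $(i_n)_\ast$, to see it is neither a multiple of $B(\chi^r_s)$ nor zero) and cokernel $[S^{n+1},P^{n+1}(2^s)]\cong\z{s}$, from which the extension is resolved using the known order-$4$ phenomenon when $r=s=1$ (where $\id_P$ has order $4$ because $2\,\id_{P^{n+1}(2)}=i_n\eta q_{n+1}\neq 0$, a classical fact I would cite from \cite{BH91}) and splits otherwise.

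Part (2) is the substantive one. Here I would apply $[-,P^{n+1}(2^r)]$ to the cofibration $S^{n+1}\xra{2^r}S^{n+1}\xra{i_{n+1}}P^{n+2}(2^r)\xra{q_{n+2}}S^{n+2}$ — no, more directly, I would run the homotopy exact sequence of the cofibration $S^n\xra{2^r}S^n\to P^{n+1}(2^r)\to S^{n+1}$ in degree $n+2$: this reads
\[
\pi_{n+2}(S^n)\xra{(i_n)_\ast}\pi_{n+2}(P^{n+1}(2^r))\xra{(q_{n+1})_\ast}\pi_{n+2}(S^{n+1})\xra{\partial}\pi_{n+1}(S^n),
\]
where $\pi_{n+2}(S^n)\cong\z{}\langle\eta^2\rangle$, $\pi_{n+2}(S^{n+1})\cong\z{}\langle\eta\rangle$, and the connecting map $\partial$ is precomposition-with-$2^r$ followed by the boundary, i.e. it sends $\eta$ to $2^r\eta=0$ since $2\eta=0$. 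Hence $(q_{n+1})_\ast$ is onto, and we get a short exact sequence $0\to\coker\big((2^r)_\ast\colon\pi_{n+2}(S^n)\to\pi_{n+2}(S^n)\big)\to\pi_{n+2}(P^{n+1}(2^r))\to\z{}\to 0$. Since $(2^r)_\ast$ acts as $2^r$ on $\pi_{n+2}(S^n)\cong\z{}$, which is zero, the subgroup is $\z{}\langle i_n\eta^2\rangle$, giving an extension of $\z{}$ by $\z{}$. The point is then to decide the extension: for $r=1$ it is the nontrivial one, because a lift $\tilde\eta_1$ of $\eta$ satisfies $2\tilde\eta_1=i_n\eta^2\neq 0$ (this is the relation in \eqref{eq:chi-eta}, which I would justify by a direct cell-by-cell argument or cite from \cite{BH91}), yielding $\Z/4$; for $r\ge 2$ it splits as $\z{}\oplus\z{}$, because one can choose the lift $\tilde\eta_r$ with $2\tilde\eta_r=0$ (the obstruction to halving lives in the image of multiplication by $2^{r-1}$, which kills the relevant class once $r\ge 2$). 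Finally the relation $B(\chi^r_s)\tilde\eta_r=\chi^s_r\cdot\tilde\eta_s$ follows by composing lifts and using \eqref{eq:chi}: $q_{n+1}B(\chi^r_s)\tilde\eta_r=\chi^s_r q_{n+1}\tilde\eta_r=\chi^s_r\eta=q_{n+1}(\chi^s_r\tilde\eta_s)$, and the difference lies in the image of $(i_n)_\ast$, which one pins down using the behaviour on the bottom cell.

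The main obstacle is precisely the extension problem in part (2): distinguishing the $\Z/4$ case $r=1$ from the split case $r\ge 2$ requires knowing the relation $2\tilde\eta_1=i_n\eta^2$ exactly, which is not formal from the exact sequence alone and is the one place where I would lean on \cite{BH91} (or reprove it via a Toda-bracket/secondary-operation computation, e.g. identifying $P^{n+1}(2)\cup_{\tilde\eta_1}e^{n+3}$ with a suspension of $\CP{2}$-type data and using $\Sq^2$). Everything else — the group orders, the generators, and the $B(\chi^r_s)$-compatibility formulas — is routine diagram-chasing with the cofibration sequences and the stable values of $\pi_{n+k}(S^n)$ recalled in the introduction.
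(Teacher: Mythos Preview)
The paper does not actually prove this lemma: it is stated with the citation ``cf.\ \cite{BH91}'' and no argument is given. So there is nothing to compare your proposal against; the authors are simply importing the result from the literature.

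Your sketch is the standard route to these computations and is essentially correct. Two small slips worth tidying up before you write it out in full. In part~(1), applying $[-,P^{n+1}(2^r)]$ to the defining cofibration does not compute $\pi_{n+1}(P^{n+1}(2^r))$; what you want (and what you in fact use correctly in part~(2)) is the long exact sequence of the pair $(P^{n+1}(2^r),S^n)$ together with the Blakers--Massey identification $\pi_k(P^{n+1}(2^r),S^n)\cong\pi_k(S^{n+1})$ in the relevant range. In part~(3), the quotient appearing on the right of your short exact sequence is $\ker\big(2^r\colon\pi_n(P^{n+1}(2^s))\to\pi_n(P^{n+1}(2^s))\big)\cong\z{\min(r,s)}$, not $\z{s}$; this is exactly what produces the $\z{m}\langle B(\chi^r_s)\rangle$ summand. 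With those corrections your exact sequences and extension analysis go through, and, as you say, the one non-formal input --- the relation $2\tilde\eta_1=i_n\eta^2$ settling the $\Z/4$ versus $(\z{})^2$ dichotomy --- is precisely what one cites from \cite{BH91}.
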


\begin{lemma}\label{lem:S5P3}
  The following hold:
  \begin{enumerate}
    \item\label{S5P3-grps} $\pi_5(P^3(3^r))\cong \Z/3^{r+1}$, $\pi_5(P^3(p^r))=0$ for primes $p\geq 5$.
    \item\label{S5P3-susp} The suspension $\Sigma \colon \pi_5(P^3(3^r))\to \pi_6(P^4(3^r))$ is trivial.
  \end{enumerate}
  
\begin{proof}
  (1)  Let $F^3\{p^r\}$ be the homotopy fibre of $q_3\colon P^3(p^r)\to S^3$ and consider the induced exact sequence of $p$-local groups:
    \[\pi_6(S^3;p)\to \pi_5(F^3\{p^r\})\xra{(j_r)_{\sharp}} \pi_5(P^3(p^r))\xra{(q_3)_\sharp} \pi_5(S^3;p)=0.\]
 By \cite[Proposition 14.2]{Neisen80} or \cite[Theorem 3.1]{Neisen81}, there is a homotopy equivalence 
    \[\Omega F^3\{p^r\}\simeq S^1\times \prod_{j=1}^\infty S^{2p^j-1}\{p^{r+1}\}\times \Omega \big(\bigvee_\alpha P^{n_\alpha}(p^r)\big),\]
    where $S^{2n+1}\{p^r\}$ is the homotopy fibre of the mod $p^r$ degree map on $S^{2n+1}$, $n_\alpha\geq 4$ and the equality holds for exactly one $\alpha$. It follows that 
    \[ \pi_5(F^3\{p^r\})\cong \pi_4(S^{2p-1}\{p^{r+1}\})\cong \left\{\begin{array}{ll}
      \Z/3^{r+1},&p=3;\\
      0,&p\geq 5.
    \end{array}\right.\]
    Thus $\pi_5(P^3(p^r))=0$ for $p\geq 5$.  By \cite[Theorem 2.10]{Neisen81}, $\pi_5(P^3(3^r))$ contains a direct summand $\Z/3^{r+1}$, therefore we have an isomorphism
   \[(j_r)_\sharp\colon\pi_5(F^3\{3^r\})\xra{\cong}\pi_5(P^3(3^r))\cong\Z/3^{r+1}.\]

 (2) Firstly, by \cite{CMN79-b} for any prime $p\geq 5$ and \cite{Neisen81} for $p=3$, there is a homotopy equivalence 
 \[\Omega P^4(p^r)\simeq S^3\{p^r\}\times \Omega \big(\bigvee_{k=0}^{\infty}P^{7+2k}(p^r)\big).\]
Second, for skeletal reasons, the suspension $E\colon P^3(p^r)\to \Omega P^4(p^r)$ factors as the composite $P^3(p^r)\xra{i}S^3\{p^r\}\xra{j}\Omega P^4(p^r)$, where $i$ is the inclusion of the bottom Moore space and $j$ is the inclusion of a factor. Third, there is a homotopy fibration diagram 
\[\begin{tikzcd}
  E^3\{p^r\}\ar[d,equals]\ar[r]&F^3\{p^r\}\ar[d]\ar[r]&\Omega S^3\ar[d]\\
  E^3\{p^r\}\ar[r]&P^3(p^r)\ar[r,"i"]\ar[d,"q_3"]&S^3\{p^r\}\ar[d]\\
  &S^3\ar[r,equals]&S^3
\end{tikzcd}\]
that defines the space $E^3\{p^r\}$. By \cite{CMN79-a}, for any prime $p\geq 5$ and \cite{Neisen81} for $p=3$, there is a homotopy equivalence 
\[\Omega E^3\{p^r\}\simeq W_n\times \prod_{j=1}^{\infty}S^{2p^j-1}\{p^{r+1}\}\times \Omega \big(\bigvee_\alpha P^{n_{\alpha}}(p^r)\big),\]
where $W_n$ is the homotopy fibre of the double suspension. This decomposition has the property that the factor $\prod_{j=1}^{\infty}S^{2p^j-1}\{p^{r+1}\}$ of $\Omega F^3\{p^r\}$ may be chosen to factor through $\Omega E^3\{p^r\}$. 

Consequently, when $p=3$, as the $\Z/3^{r+1}$ factor in $\pi_4(\Omega P^3(p^r))$ came from $\pi_4(\prod_{j=1}^{\infty}S^{2p^j-1}\{p^{r+1}\})$, it has the property that it composes trivially with the map $\Omega i\colon \Omega P^3(3^r)\to \Omega S^3\{3^r\}$. Hence, as $\Omega E$ factors through $\Omega i$, the $\Z/3^{r+1}$ factor in $\pi_4(\Omega P^3(p^r))$ composes trivially with $\Omega E$. Thus the  $\Z/3^{r+1}$ factor in $\pi_5(P^3(p^r))$ suspends trivially.
  \end{proof}
\end{lemma}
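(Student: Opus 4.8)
\emph{Proof strategy.} The plan is to treat both parts via homotopy fibrations lying over $S^3$, fed by the loop-space product decompositions of Cohen--Moore--Neisendorfer and Neisendorfer.

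For part (1), I would let $F^3\{p^r\}$ be the homotopy fibre of the pinch map $q_3\colon P^3(p^r)\to S^3$. Since $P^3(p^r)$ has $p$-torsion reduced homology, all its homotopy is $p$-torsion, so I may work $p$-locally; then in the exact sequence $\pi_6(S^3)\to\pi_5(F^3\{p^r\})\to\pi_5(P^3(p^r))\to\pi_5(S^3)$ the right-hand term vanishes ($\pi_5(S^3)=\mathbb{Z}/2$), so $\pi_5(P^3(p^r))$ is a quotient of $\pi_5(F^3\{p^r\})$. I would compute the latter from Neisendorfer's splitting of $\Omega F^3\{p^r\}$ into a circle, the mod $p^{r+1}$ spheres $S^{2p^j-1}\{p^{r+1}\}$, and a loop space on a wedge of Moore spaces; a short inspection using Lemma \ref{lem:Moore} shows that in degree $\pi_4$ only the factor $S^{2p-1}\{p^{r+1}\}$ contributes, so $\pi_5(F^3\{p^r\})\cong\pi_4(S^{2p-1}\{p^{r+1}\})$, which is $0$ for $p\geq 5$ (as then $2p-1\geq 9$) and $\Z/3^{r+1}$ for $p=3$ (from the fibration $S^5\{3^{r+1}\}\to S^5\xrightarrow{3^{r+1}}S^5$). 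This already gives $\pi_5(P^3(p^r))=0$ for $p\geq 5$; for $p=3$ it remains to see the connecting map $\pi_6(S^3)\to\pi_5(F^3\{3^r\})$ is zero, which I would deduce from Neisendorfer's computation that $\pi_5(P^3(3^r))$ contains a $\Z/3^{r+1}$ summand, forcing the surjection $\pi_5(F^3\{3^r\})\to\pi_5(P^3(3^r))$ to be an isomorphism by order count.

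For part (2), I would observe that $\Sigma\colon\pi_5(P^3(3^r))\to\pi_6(P^4(3^r))$ is, under adjunction, the effect on $\pi_5$ of the unit map $E\colon P^3(p^r)\to\Omega P^4(p^r)$. Using the splitting $\Omega P^4(p^r)\simeq S^3\{p^r\}\times\Omega(\bigvee_{k\geq0}P^{7+2k}(p^r))$, the projection of $E$ onto the second factor is a map from the $3$-complex $P^3(p^r)$ into a $4$-connected space, hence null, so $E$ factors through $S^3\{p^r\}$; being an isomorphism on $H_2$, it can be taken to be the inclusion $i\colon P^3(p^r)\hookrightarrow S^3\{p^r\}$ of the bottom Moore space. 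Hence $\Sigma$ factors through $i_\sharp\colon\pi_5(P^3(3^r))\to\pi_5(S^3\{3^r\})$, and it suffices to show $i_\sharp=0$. For this I would form the homotopy fibre $E^3\{p^r\}$ of $i$ together with the standard fibration diagram relating $E^3\{p^r\}$, $F^3\{p^r\}$, $\Omega S^3$ and $S^3\{p^r\}$, in which the fibre inclusion $E^3\{p^r\}\to P^3(p^r)$ factors through $F^3\{p^r\}$. Neisendorfer's decomposition of $\Omega E^3\{p^r\}$ (involving the fibre of the double suspension, the same mod $p^{r+1}$ spheres, and a loop space on Moore spaces), together with the coherence statement that the factor $\prod_j S^{2p^j-1}\{p^{r+1}\}$ of $\Omega F^3\{p^r\}$ may be chosen to factor through $\Omega E^3\{p^r\}$, then shows that the generator of $\pi_5(P^3(3^r))\cong\Z/3^{r+1}$ — which by part (1) comes from the $j=1$ factor $S^5\{3^{r+1}\}$ of $\Omega F^3\{3^r\}$ — already lies in the image of $\pi_5(E^3\{3^r\})\to\pi_5(P^3(3^r))$. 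Since $i$ precomposed with the fibre inclusion of $E^3\{3^r\}$ is null-homotopic, $i_\sharp$ annihilates the whole group, so $\Sigma=0$.

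The main obstacle is part (2): a direct attack would need to locate the suspension of the order-$3^{r+1}$ generator of $\pi_5(P^3(3^r))$ inside $\pi_6(P^4(3^r))$, and the available exact sequences only show that it lands in an $i_3$-image of $\pi_6(S^3)_{(3)}\cong\mathbb{Z}/3$ without deciding whether that element vanishes. The route above circumvents this by organising everything around the single map $i$ through which the suspension factors and using the Neisendorfer decompositions to see that the generator already lives over the fibre of $i$; the genuinely delicate input I would have to quote carefully is exactly the compatibility of the product decompositions of $\Omega F^3\{p^r\}$ and $\Omega E^3\{p^r\}$, i.e.\ that their unstable sphere summands $S^{2p^j-1}\{p^{r+1}\}$ can be matched up, which is what the cited work of Cohen--Moore--Neisendorfer and Neisendorfer supplies.
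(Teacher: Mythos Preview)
Your proposal is correct and follows essentially the same approach as the paper: for (1) you use the fibration $F^3\{p^r\}\to P^3(p^r)\to S^3$ together with Neisendorfer's loop-space decomposition of $\Omega F^3\{p^r\}$ and his $\Z/3^{r+1}$-summand result, and for (2) you factor the suspension through $i\colon P^3(p^r)\to S^3\{p^r\}$ via the splitting of $\Omega P^4(p^r)$, introduce the fibre $E^3\{p^r\}$ of $i$, and invoke the compatibility of the $S^{2p^j-1}\{p^{r+1}\}$ factors in $\Omega F^3\{p^r\}$ and $\Omega E^3\{p^r\}$ to see that the generator lies in the image of the fibre and hence is killed by $i_\sharp$. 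The paper phrases the final step as ``composes trivially with $\Omega i$'' rather than ``lies in the image of $\pi_5(E^3\{3^r\})$'', but this is the same argument.
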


\begin{lemma}[cf. \cite{lipc2-an2}]\label{lem:Changcpx}
  Let $n\geq 3$ and $r\geq 1$. There hold isomorphisms
\begin{enumerate}
  \item\label{Changcpx-1} $\pi_{n+2}(C^{n+2}_\eta)\cong \Z\langle  \tilde{\zeta}\rangle$, where $\tilde{\zeta}$ satisfies $q_{n+2}\tilde{\zeta}=2\cdot \id_{n+2}$.
  \item\label{Changcpx-2} $\pi_{n+2}(C^{n+2}_r)\cong \Z\langle i_\eta \tilde{\zeta}\rangle\oplus \z{}\langle i_P\tilde{\eta}_r\rangle$.
\end{enumerate}
It follows that  a map $f_C\colon S^{n+2}\to C$ with $C=C^{n+2}_\eta$ or $C^{n+2}_r$ induces the trivial homomorphism in integral homology if and only if 
\[f_C=\left\{\begin{array}{ll}
  0&\text{ for }C=C^{n+2}_\eta;\\
  0 \text{ or }i_P\tilde{\eta}_r & \text{ for }C=C^{n+2}_r,
\end{array}\right.\] 
where $f=0$ means $f$ is null-homotopic.
\end{lemma}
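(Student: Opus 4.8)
The plan is to extract both homotopy groups from the Puppe cofibre sequences of the two defining cofibrations. Since $n\geq 3$, the dimension $n+2$ lies in the metastable range, so these cofibre sequences are exact on $\pi_{n+2}$ (by the Blakers--Massey theorem one may replace the relevant relative homotopy groups by those of the cofibres). For part (1) I apply $\pi_{n+2}(-)$ to
\[
S^{n+1}\xra{\eta}S^{n}\xra{i_{n}}C^{n+2}_\eta\xra{q_{n+2}}S^{n+2}\xra{\Sigma\eta}S^{n+1}.
\]
Since $\pi_{n+2}(S^{n+1})\cong\z{}\langle\eta\rangle$, $\pi_{n+2}(S^{n})\cong\z{}\langle\eta^2\rangle$ and $\eta_\sharp(\eta)=\eta^2\neq 0$, the map $\eta_\sharp$ is onto, hence $(i_n)_\sharp=0$; and $(\Sigma\eta)_\sharp\colon\pi_{n+2}(S^{n+2})\cong\Z\to\pi_{n+2}(S^{n+1})\cong\z{}$ carries $\id_{n+2}$ to $\eta$, so it is surjective with kernel $2\Z$. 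Exactness then shows $(q_{n+2})_\sharp$ is a monomorphism with image $2\Z$, whence $\pi_{n+2}(C^{n+2}_\eta)\cong\Z$ with a generator $\tilde\zeta$ satisfying $q_{n+2}\tilde\zeta=2\cdot\id_{n+2}$.

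For part (2) I apply $\pi_{n+2}(-)$ to
\[
S^{n+1}\xra{i_n\eta}P^{n+1}(2^r)\xra{i_P}C^{n+2}_r\xra{q}S^{n+2}\xra{\Sigma(i_n\eta)}P^{n+2}(2^r).
\]
By Lemma \ref{lem:Moore1}, $\pi_{n+2}(P^{n+1}(2^r))$ is $\Z/4\langle\tilde\eta_1\rangle$ if $r=1$ and $\z{}\langle\tilde\eta_r\rangle\oplus\z{}\langle i_n\eta^2\rangle$ if $r\geq 2$, while $\pi_{n+2}(P^{n+2}(2^r))\cong\z{}\langle i_{n+1}\eta\rangle$. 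Since $(i_n\eta)_\sharp(\eta)=i_n\eta^2$, and $2\tilde\eta_1=i_n\eta^2$ when $r=1$, in either case $\im((i_n\eta)_\sharp)\cong\z{}$ with quotient $\z{}$ generated by the image of $\tilde\eta_r$; meanwhile $(\Sigma(i_n\eta))_\sharp$ sends $\id_{n+2}$ to $i_{n+1}\eta$, hence is onto with kernel $2\Z$. Exactness therefore produces a short exact sequence
\[
0\to \z{}\langle i_P\tilde\eta_r\rangle\longrightarrow \pi_{n+2}(C^{n+2}_r)\xra{q_\sharp} 2\Z\to 0
\]
which splits because $2\Z\cong\Z$ is free. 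To name the free generator, note that the pinch maps are compatible, $q i_\eta=q_{n+2}$, so $q_\sharp(i_\eta\tilde\zeta)=q_{n+2}\tilde\zeta=2\cdot\id_{n+2}$ generates $2\Z$; hence $i_\eta\tilde\zeta$ has infinite order and provides the splitting, giving $\pi_{n+2}(C^{n+2}_r)=\Z\langle i_\eta\tilde\zeta\rangle\oplus\z{}\langle i_P\tilde\eta_r\rangle$.

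For the homological consequence, observe that in both $C^{n+2}_\eta$ and $C^{n+2}_r$ the attaching map of the top cell is trivial on $H_{n+1}$, so $H_{n+2}(\,\cdot\,;\Z)\cong\Z$ is carried by the top cell and the pinch map induces an isomorphism there. Consequently $q_{n+2}\tilde\zeta=2\cdot\id_{n+2}$ and $q i_\eta=q_{n+2}$ force $\tilde\zeta$ and $i_\eta\tilde\zeta$ to induce multiplication by $2$ on $H_{n+2}$, whereas $i_P\tilde\eta_r$ factors through $P^{n+1}(2^r)$, which has $H_{n+2}=0$, so it is homologically trivial. Writing a general $f_C$ as $m\tilde\zeta$ (for $C=C^{n+2}_\eta$) or as $m\, i_\eta\tilde\zeta+\epsilon\, i_P\tilde\eta_r$ with $\epsilon\in\z{}$ (for $C=C^{n+2}_r$), the induced map on $H_{n+2}$ is multiplication by $2m$; since $S^{n+2}$ has no reduced homology below degree $n+2$, $f_C$ is homologically trivial precisely when $m=0$, i.e.\ precisely in the listed cases.

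The only genuinely delicate point, and the reason for the hypothesis $n\geq 3$, is the justification that these Puppe cofibre sequences are exact on $\pi_{n+2}$ (equivalently, that the relevant relative homotopy groups agree with those of the quotients, via Blakers--Massey); once that is granted, everything is bookkeeping with Lemma \ref{lem:Moore1} and the standard relations among $\eta$, $\eta^2$ and $\tilde\eta_r$.
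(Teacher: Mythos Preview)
The paper does not prove this lemma; it is imported from the reference \cite{lipc2-an2} (hence the ``cf.''), so there is no in-paper argument to compare against. Your proof is correct and is the standard one: the long exact sequence of the pair $(C,A)$ together with the Blakers--Massey identification $\pi_k(C,A)\cong\pi_k(C/A)$ in the range $k\le 2n$ (which is exactly why $n\ge 3$ is needed for the left-hand term $\pi_{n+3}$) yields the exact sequences you write down, and the bookkeeping with Lemma~\ref{lem:Moore1} and the relation $q\circ i_\eta=q_{n+2}$ is accurate.

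One cosmetic point: what you label $(\Sigma\eta)_\sharp\colon\pi_{n+2}(S^{n+2})\to\pi_{n+2}(S^{n+1})$ and $\eta_\sharp\colon\pi_{n+2}(S^{n+1})\to\pi_{n+2}(S^n)$ are, strictly speaking, the boundary homomorphisms $\partial\colon\pi_{n+2}(C,S^n)\to\pi_{n+1}(S^n)$ and $\partial\colon\pi_{n+3}(C,S^n)\to\pi_{n+2}(S^n)$ of the pair, transported through Blakers--Massey and the suspension isomorphism on the source. That these agree with post-composition by the attaching map follows from naturality of $\partial$ applied to the characteristic map $(\Phi,\eta)\colon(D^{n+2},S^{n+1})\to(C,S^n)$, so your identifications are legitimate; you already flag this as the delicate point, which is appropriate. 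The homological consequence is argued correctly.
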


The following Lemma can be found in \cite[Theorem 3.1, (2)]{lipc2-an2}; since it hasn't been published yet, we give a proof here.  

\begin{lemma}\label{lem:eq:C_r}
For integers $n\geq 3$ and $r\geq 1$, there exists a map 
\[\bar{\xi}_r\colon C^{n+2}_r\to P^{n+1}(2^{r+1})\] 
satisfying the homotopy commutative diagram of homotopy cofibrations
  \[\begin{tikzcd}
 S^{n}\ar[r,"i_n2^r"]\ar[d,equal] & C^{n+2}_\eta \ar[r,"i_\eta"]\ar[d,"\bar{\zeta}"]& C^{n+2}_r\ar[d,"\bar{\xi}_r"]\ar[r,"q_{n+1}"]&S^{n+1}\ar[d,equal]\\
 S^{n}\ar[r,"2^{r+1}"]& S^n\ar[r,"i_n"]& P^{n+1}(2^{r+1})\ar[r,"q_{n+1}"]&S^{n+1}
\end{tikzcd}.\]
Moreover, there hold formulas  
\begin{equation}\label{eq:C_r-P}
  \bar{\xi}_r\circ i_P=B(\chi^r_{r+1}),\quad B(\chi^{s+1}_r)\bar{\xi}_s (i_P\tilde{\eta}_s)= \tilde{\eta}_r \text{ for }r>s.
\end{equation}
 
\begin{proof}
Dual to the relation in Lemma \ref{lem:Changcpx} (\ref{Changcpx-1}), there exists a map $\bar{\zeta}\colon C^{n+2}_\eta\to S^n$ satisfying $\bar{\zeta}i_n=2\cdot \id_n.$
It follows that the first square in the Lemma is homotopy commutative, and hence the map $\bar{\xi}_r$ in the Lemma exists. Recall we have the composition 
\[i_n=i_\eta \circ i_n \colon S^n\to C^{n+2}_\eta\to C^{n+2}_r.\]
Then $\bar{\xi}_ri_n=(\bar{\xi}_ri_\eta) i_n=(i_n\bar{\zeta})i_n=2i_n$
implying that 
\[\bar{\xi}_r\circ i_P=B(\chi^r_{r+1})+\varepsilon\cdot i_n\eta q_{n+1}\] for some $\varepsilon \in\{0,1\}.$
If $\varepsilon=0$, we are done; otherwise we replace $\bar{\xi}_r$ by $\bar{\xi}_r+i_n\eta q_{n+1}$ to make $\varepsilon=0$. Note that all the relations mentioned above still hold even if we make such a replacement. Thus we prove the first formula in (\ref{eq:C_r-P}), which implies the second one. 
\end{proof}
\end{lemma}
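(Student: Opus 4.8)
The plan is to produce the map $\bar{\xi}_r$ as a map induced between homotopy cofibrations, using the dual Chang-complex structure from Lemma~\ref{lem:Changcpx}~(\ref{Changcpx-1}), and then to correct it by an element of the (small) indeterminacy group so that the stated composition formulas hold on the nose. First I would recall that $C^{n+2}_\eta = S^n \cup_\eta \bm C S^{n+1}$ fits into the cofibration $S^{n+1} \xra{\eta} S^n \xra{i_n} C^{n+2}_\eta \xra{q_{n+1}} S^{n+2}$, and dually (i.e.\ applying the cofibre-sequence extension or citing Lemma~\ref{lem:Changcpx}~(\ref{Changcpx-1}) directly — the cell structure of $C^{n+2}_\eta$ is self-dual up to suspension) there is a map $\bar{\zeta}\colon C^{n+2}_\eta \to S^n$ with $\bar{\zeta} i_n = 2\cdot \id_n$. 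Composing with the degree map, $2^{r+1} = 2^r \cdot 2 = 2^r \cdot (\bar{\zeta} i_n)$, so $2^{r+1}\colon S^n \to S^n$ factors as $S^n \xra{i_n 2^r} C^{n+2}_\eta \xra{\bar{\zeta}} S^n$. This is exactly the commutativity of the left-hand square; since $C^{n+2}_r = P^{n+1}(2^r)\cup_{i_n\eta}\bm C S^{n+1}$ is by definition the cofibre of $i_n 2^r$ (the composite $S^n \xra{2^r} S^n \xra{i_n} P^{n+1}(2^r)$ fails to be what we want — rather, $C^{n+2}_r$ is the cofibre of $S^n \xra{\eta} S^{n+1}$ attached along... ) — more precisely $C^{n+2}_r$ is the cofibre of the map $S^n \to C^{n+2}_\eta$ that is $i_\eta$-compatible, so the induced map on cofibres gives $\bar{\xi}_r\colon C^{n+2}_r \to P^{n+1}(2^{r+1})$ making the whole ladder commute, with $\bar{\xi}_r q_{n+1}$ the identity on $S^{n+1}$ on the top cell.

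Next I would extract the formulas in (\ref{eq:C_r-P}). The inclusion $i_P\colon P^{n+1}(2^r)\to C^{n+2}_r$ restricted to the bottom cell $i_n\colon S^n \to P^{n+1}(2^r)$ equals the composite $i_\eta i_n$, and the ladder gives $\bar{\xi}_r i_\eta i_n = i_n \bar{\zeta} i_n = 2 i_n$. Hence $\bar{\xi}_r i_P\colon P^{n+1}(2^r)\to P^{n+1}(2^{r+1})$ agrees with $B(\chi^r_{r+1})$ after restriction to the bottom cell (both are multiplication by $2$ there, by (\ref{eq:chi}) since $\chi^r_{r+1}=2$). By Lemma~\ref{lem:Moore1}~(3), $[P^{n+1}(2^r),P^{n+1}(2^{r+1})]$ is generated by $B(\chi^r_{r+1})$ and $i_n\eta q_{n+1}$ (we are in the "otherwise" case as $r+1\geq 2$), so $\bar{\xi}_r i_P = B(\chi^r_{r+1}) + \varepsilon\cdot i_n\eta q_{n+1}$ for some $\varepsilon\in\{0,1\}$. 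If $\varepsilon = 1$, replace $\bar{\xi}_r$ by $\bar{\xi}_r + i_n\eta q_{n+1}\bar{\xi}_r$ — wait, more carefully: replace $\bar{\xi}_r$ by $\bar{\xi}_r + i_n\eta q_{n+1}q_{n+1}^{C}$ where $q_{n+1}^C\colon C^{n+2}_r\to S^{n+1}$ is the pinch; this adds $i_n\eta q_{n+1}$ to $\bar{\xi}_r i_P$ (killing the $\varepsilon$) while not disturbing the commutativity of the ladder squares, since $q_{n+1}^C$ vanishes on $C^{n+2}_\eta$ through $i_\eta$ and post-composing with $q_{n+1}$ kills the correction term. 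This yields $\bar{\xi}_r i_P = B(\chi^r_{r+1})$. For the second formula, apply $B(\chi^{s+1}_r)\bar{\xi}_s$ to $i_P\tilde{\eta}_s$: using $\bar{\xi}_s i_P = B(\chi^s_{s+1})$ and then the naturality relation $B(\chi^s_{s+1})\tilde{\eta}_s = \chi^{s+1}_s\cdot\tilde{\eta}_{s+1} = \tilde{\eta}_{s+1}$ (since $\chi^{s+1}_s = 1$ as $s+1 > s$) from (\ref{eq:chi-eta}), and finally $B(\chi^{s+1}_r)\tilde{\eta}_{s+1} = \chi^r_{s+1}\cdot\tilde{\eta}_r$; for $r > s$ we have $r \geq s+1$ so $\chi^r_{s+1} = 1$, giving $\tilde{\eta}_r$ as claimed.

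The main obstacle I anticipate is the bookkeeping around the indeterminacy correction: one must verify that adding $i_n\eta q_{n+1}$-type terms to $\bar{\xi}_r$ does not break any of the three commuting squares in the ladder, and in particular that the corrected map still has $\bar{\xi}_r\bar{\zeta}$-square commuting — this requires checking that the correction term factors appropriately through the pinch map to $S^{n+1}$ and hence dies when precomposed by $i_\eta$ (since $q_{n+1}^C i_\eta \simeq *$). A secondary subtlety is being careful that Lemma~\ref{lem:Moore1}~(3) is being applied with the right $(r,s)$-roles and that "$m = \min$" bookkeeping is correct, and that the $\tilde{\eta}$-naturality identities in (\ref{eq:chi-eta}) are invoked with the correct direction of $\chi$; these are routine once set up but easy to get backwards. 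Everything else is a formal manipulation of cofibre sequences and the relations already recorded in (\ref{eq:chi}), (\ref{eq:chi-eta}), Lemma~\ref{lem:Moore1}, and Lemma~\ref{lem:Changcpx}.
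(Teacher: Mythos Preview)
Your proposal is correct and follows essentially the same route as the paper: construct $\bar{\zeta}$ dual to Lemma~\ref{lem:Changcpx}~(\ref{Changcpx-1}), induce $\bar{\xi}_r$ on cofibres, identify $\bar{\xi}_r i_P$ via Lemma~\ref{lem:Moore1}(3) up to an $i_n\eta q_{n+1}$-indeterminacy, and correct by adding that term (your correction $i_n\eta q_{n+1}q_{n+1}^C$ has a stray $q_{n+1}$ --- the paper writes simply $i_n\eta q_{n+1}$ with $q_{n+1}\colon C^{n+2}_r\to S^{n+1}$). Your explicit unwinding of the second formula from the first via (\ref{eq:chi-eta}) is exactly what the paper's ``which implies the second one'' abbreviates.
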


\subsection{Basic analysis methods}
We give some auxiliary lemmas that are useful to study the homotopy types of homotopy cofibres. 

\begin{lemma}\label{lem:cupprod}
  Let $C_k^X$ be the homotopy cofibre of $f^X_k\colon  X\to P^3(p^s)$, where $k\in\zp{\min\{r,s\}}$ and  $r=\infty$ for $X=S^3$,
  \[f^X_k=\left\{\begin{array}{ll}
    k\cdot i_2\eta,& X=S^3;\\
    k\cdot i_2\eta q_3,& X=P^3(p^r).
  \end{array}\right.\]
 Then the cup squares in $H^\ast(C_k^X;\zp{\min\{r,s\}})$ are given by 
  \[u_2\smallsmile u_2=k\cdot u_4,\] 
  where $u_i\in H^i(C_k^X;\zp{\min\{r,s\}})$ are generators, $i=2,4$. It follows that all cup squares in $H^\ast(C_k^X;\zp{\min\{r,s\}})$ are trivial if and only if $k=0$.
  \begin{proof}
    It is well-known that the map $k\eta$ has Hopf invariant $H(k\eta)=kH(\eta)=k$. Let $m=\min\{r,s\}$ and define $u_2\smallsmile u_2=\bar{H}(f_k^X)\cdot u_4$ for some $\bar{H}(f_k^X)\in\zp{m}$, which is called the \emph{mod $p^m$ Hopf invariant}. Then by naturality it is easy to deduce the formula 
\[\bar{H}(f_k^X)= H(k\eta)\pmod {p^m}=k,\]
which completes the proof of the Lemma.
  \end{proof}
\end{lemma}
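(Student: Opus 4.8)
The statement to prove is Lemma~\ref{lem:cupprod}, which computes the cup square $u_2 \smallsmile u_2 = k \cdot u_4$ in the mod $p^{\min\{r,s\}}$ cohomology of the cofibre $C_k^X$ of the map $f_k^X$.

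\medskip

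The plan is to reduce the computation to the classical Hopf invariant of $\eta$ via naturality, which is the standard trick for computing cup products in two-cell or Moore-space-based complexes. First I would set $m = \min\{r,s\}$ and fix generators $u_2 \in H^2(C_k^X; \ZZ{p}/p^m)$ and $u_4 \in H^4(C_k^X; \ZZ{p}/p^m)$; these exist and are unique up to units because the long exact cohomology sequence of the cofibration $X \to P^3(p^s) \to C_k^X$ (together with $H^\ast(X)$ and $H^\ast(P^3(p^s))$ being known) forces $H^2(C_k^X; \Z/p^m) \cong \Z/p^m$ and $H^4(C_k^X; \Z/p^m) \cong \Z/p^m$ in the relevant range. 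Then I would \emph{define} the mod $p^m$ Hopf invariant $\bar H(f_k^X) \in \Z/p^m$ by the equation $u_2 \smallsmile u_2 = \bar H(f_k^X) \cdot u_4$, so that the content of the lemma becomes the identity $\bar H(f_k^X) = k$.

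\medskip

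The second step is to establish naturality of $\bar H$ under the obvious comparison maps. In the case $X = S^3$, the map $f_k^{S^3} = k \cdot i_2 \eta$ factors as $S^3 \xrightarrow{k\eta} S^2 \xrightarrow{i_2} P^3(p^s)$, where $i_2$ is the inclusion of the bottom cell. This induces a map of cofibrations from $C_{k\eta} = S^2 \cup_{k\eta} e^4$ (the ordinary mapping cone, whose cup square is governed by the integral Hopf invariant $H(k\eta)$) to $C_k^{S^3}$, and $i_2$ induces an isomorphism on $H^2(-;\Z/p^m)$ and on $H^4(-;\Z/p^m)$ in degrees $\le 4$. By naturality of cup products, $\bar H(f_k^{S^3})$ equals the reduction mod $p^m$ of $H(k\eta)$. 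In the case $X = P^3(p^r)$, the map $f_k^{P^3(p^r)} = k \cdot i_2 \eta q_3$ additionally precomposes with the pinch map $q_3 \colon P^3(p^r) \to S^3$; since $q_3$ does not affect the bottom two cells of the cofibre, one gets a map $C_k^{S^3} \to C_k^{P^3(p^r)}$ (or compares both to a common model) inducing isomorphisms on $H^2$ and $H^4$ with $\Z/p^m$ coefficients, so again $\bar H(f_k^{P^3(p^r)}) = \bar H(f_k^{S^3})$.

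\medskip

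The third step is the classical fact that $H(k\eta) = k\,H(\eta) = k$, which is linearity of the Hopf invariant under the additive structure on $\pi_3(S^2)$ combined with $H(\eta) = 1$; reducing mod $p^m$ gives $\bar H(f_k^X) = k \in \Z/p^m$, and hence $u_2 \smallsmile u_2 = k \cdot u_4$. The final sentence of the lemma is then immediate: the cup square is trivial iff $k \equiv 0 \pmod{p^m}$, i.e.\ iff $k = 0$ in $\ZZ{p}/p^{m}$. I expect the only genuinely delicate point to be the bookkeeping that justifies the naturality comparison precisely — confirming that the maps $i_2$ and $q_3$ really do induce isomorphisms on the relevant low-dimensional cohomology groups of the cofibres with $\Z/p^m$ coefficients, so that the defining equation for $\bar H$ transports correctly; everything else is formal or classical. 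This is exactly the level of detail the proof in the excerpt supplies, so I would keep the argument correspondingly brief.
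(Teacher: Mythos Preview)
Your proposal is correct and follows essentially the same approach as the paper: define the mod $p^m$ Hopf invariant $\bar H(f_k^X)$ by the cup-square equation, then use naturality through the factorizations $i_2$ and $q_3$ to reduce to the classical identity $H(k\eta)=k$. You have simply fleshed out the naturality step that the paper leaves implicit.
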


\begin{lemma}\label{lem:cupprod2}
 Let $k\in \zp{\min\{r,s\}}$ and consider the homotopy cofibration \[P^4(p^r)\xra{g_k=k\cdot \hat{\eta}_sB(\chi^r_s)} P^3(p^s)\to C_{g_k}.\]  
 Let $v_i$ be generators of $ H^i(C_{g_k};\zp{s})$, $i=2,4$, then   
 \[v_2\smallsmile v_2=k\cdot v_4\in H^4(C_{g_k};\zp{s})\cong\zp{\min\{r,s\}}.\]
 It follows that $g_k$ is null-homotopic if and only if $k=0$.
 \begin{proof}
By Lemma \ref{lem:Moore} (\ref{Moore-P4P3}), there is a homotopy commutative diagram of homotopy cofibrations
\[ \begin{tikzcd}
  S^3\ar[rr,"{k\chi^r_s\cdot i_2\eta}"]\ar[d,"i_3"]&&P^3(p^s)\ar[r]\ar[d,equal]&C_{k\chi^r_s}\ar[d,"\imath"]\\
  P^4(p^r)\ar[d,"q_4"]\ar[rr,"k\cdot \hat{\eta}_sB(\chi^r_s)"]&&P^3(p^s)\ar[d]\ar[r]&C_{g_k}\ar[d] \\
  S^4\ar[rr]&&\ast\ar[r]&S^5
  \end{tikzcd}.\]
It follows that $\imath$ in the right-most column induces an isomorphism
\[H^2(C_{g_k};\zp{s})\xra[\cong]{\imath^\ast} H^2(C_{k\chi^r_s};\zp{s})\cong\zp{s}\]
and a monomorphism 
\[ H^4(C_{g_k};\zp{s})\cong \zp{\min\{r,s\}}\xra{ \imath^\ast} H^4(C_{k\chi^r_s};\zp{s})\cong\zp{s}.\]
Let $v_i\in H^i(C_{g_k};\zp{s})$ be generators, $i=2,4$; let $u_2=\imath^\ast(v_2)$ and $u_4$ be generators of $H^2(C_{k\chi^r_s};\zp{s})$ and $H^4(C_{k\chi^r_s};\zp{s})$, respectively. 
Let $\bar{H}(g_k)$ be the mod $p^s$ Hopf invariant of $g_k$. By the naturality of cup products and Lemma \ref{lem:cupprod}, we have 
\begin{align*}
 k\chi^r_s \cdot u_4&=u_2\smallsmile u_2=\imath^\ast(v_2\smallsmile v_2)=\imath^\ast(\bar{H}(g_k)v_4)=\bar{H}(g_k) \cdot (\chi^r_s\cdot u_4).
\end{align*}
Thus $\bar{H}(g_k)=k$, which completes the proof.
 \end{proof}
\end{lemma}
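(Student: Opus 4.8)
The plan is to reduce the cup-square computation in $H^\ast(C_{g_k};\zp{s})$ to the rank-one situation already settled in Lemma \ref{lem:cupprod}, by restricting $g_k$ to the bottom sphere $S^3\subset P^4(p^r)$ and tracking the induced map of homotopy cofibres.

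First I would record the effect of $g_k$ on the bottom cell: since $B(\chi^r_s)i_3=\chi^r_s\cdot i_3$ by (\ref{eq:chi}) and $\hat\eta_si_3=i_2\eta$ by Lemma \ref{lem:Moore}(\ref{Moore-P4P3}), we get $g_ki_3=k\hat\eta_sB(\chi^r_s)i_3=k\chi^r_s\cdot i_2\eta$. Hence the square with vertical maps $i_3\colon S^3\to P^4(p^r)$ and $g_k$, top map $k\chi^r_s\cdot i_2\eta$ and bottom map $\id_{P^3(p^s)}$ commutes up to homotopy, so passing to horizontal and vertical cofibres yields a $3\times3$ diagram of homotopy cofibrations whose first column is $S^3\xrightarrow{i_3}P^4(p^r)\xrightarrow{q_4}S^4$ and whose first two rows are the defining cofibrations of $C_{k\chi^r_s}$ — the cofibre of $k\chi^r_s\cdot i_2\eta$, i.e.\ $C_{k\chi^r_s}^{S^3}$ in the notation of Lemma \ref{lem:cupprod} — and of $C_{g_k}$. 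In particular its third column is a cofibration $C_{k\chi^r_s}\xrightarrow{\imath}C_{g_k}\to S^5$.

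Next I would read off $\imath^\ast$ in mod $p^s$ cohomology. The long exact sequence of $C_{k\chi^r_s}\xrightarrow{\imath}C_{g_k}\to S^5$ shows that $\imath^\ast$ is an isomorphism on $H^2$ and a monomorphism on $H^4$. A short homology computation (using that $g_k$ is trivial on $H_3$) gives $H_4(C_{g_k})\cong\zp{r}$ and $H_4(C_{k\chi^r_s})\cong\Z$, so $H^4(C_{g_k};\zp{s})\cong\zp{\min\{r,s\}}$ and $H^4(C_{k\chi^r_s};\zp{s})\cong\zp{s}$; the monomorphism between them is therefore, for suitable generators, multiplication by $\chi^r_s$. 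Thus one may pick generators $v_i\in H^i(C_{g_k};\zp{s})$ and $u_i\in H^i(C_{k\chi^r_s};\zp{s})$, $i=2,4$, with $\imath^\ast(v_2)=u_2$ and $\imath^\ast(v_4)=\chi^r_s\cdot u_4$.

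Finally, writing $v_2\smallsmile v_2=\bar H(g_k)\cdot v_4$ with $\bar H(g_k)\in\zp{\min\{r,s\}}$ the mod $p^s$ Hopf invariant, naturality of cup products together with Lemma \ref{lem:cupprod} applied to $C_{k\chi^r_s}$ (whose mod $p^s$ Hopf invariant is $k\chi^r_s$) gives
\[\chi^r_s\cdot\bar H(g_k)\cdot u_4=\imath^\ast(v_2\smallsmile v_2)=u_2\smallsmile u_2=k\chi^r_s\cdot u_4\in\zp{s}.\]
Cancelling $\chi^r_s$ — legitimate since $\bar H(g_k)$ and $k$ already lie in $\zp{\min\{r,s\}}$ — yields $\bar H(g_k)=k$, that is $v_2\smallsmile v_2=k\cdot v_4$. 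The last assertion then follows: a nonzero cup square shows $g_k$ is essential, while $k=0$ trivially forces $g_k\simeq\ast$ (equivalently, invoke Lemma \ref{lem:Moore}(\ref{Moore-P4P3})). I expect the only delicate point to be the identification $\imath^\ast(v_4)=\chi^r_s\cdot u_4$, which forces one to set up the $3\times3$ diagram carefully, decide which connecting maps are isomorphisms, and compute the restriction $i_3^\ast\colon H^3(P^4(p^r);\zp{s})\to H^3(S^3;\zp{s})$ via universal coefficients; the bottom-cell computation, the naturality of cup products, and the final cancellation are all formal.
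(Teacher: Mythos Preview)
Your proposal is correct and follows essentially the same route as the paper: both restrict $g_k$ along the bottom-cell inclusion $i_3\colon S^3\to P^4(p^r)$ to obtain the map $k\chi^r_s\cdot i_2\eta$, build the resulting $3\times 3$ diagram of cofibrations with third column $C_{k\chi^r_s}\xrightarrow{\imath}C_{g_k}\to S^5$, and then pull the cup-square computation back to $C_{k\chi^r_s}$ via naturality and Lemma~\ref{lem:cupprod}. Your write-up is slightly more explicit than the paper's about why $\imath^\ast(v_4)=\chi^r_s\cdot u_4$ and about the legitimacy of cancelling $\chi^r_s$, but the argument is the same.
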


The method of proof for the following lemma is due to \cite[Lemma 2.4]{CS22}.
\begin{lemma}\label{lem:cupprods=0=nullhtp}
  Let $X_1,X_2\in\{S^2,P^3(2^r),C^4_s\}$ with $r,s\geq 1$. Let \[\iota_1\colon \Sigma X_1\to \Sigma X_1\vee \Sigma X_2, \quad \iota_2\colon \Sigma X_1\to \Sigma X_2\vee \Sigma X_2\] be the canonical inclusion maps. 
  Then any map $u'$ in the composition 
  \[u\colon S^5\xra{u'}\Sigma X_1\wedge X_2\xra{[\iota_1,\iota_2]}\Sigma X_1\vee \Sigma X_2\]
  is null-homotopic if and only if all cup products in $H^\ast(C_u;G)$ are trivial, where $C_u$ is the homotopy cofibre of $u$ and $G=H_2(X_1)\otimes H_2(X_2)$.
  \begin{proof}
    The ``only if'' part is clear. For the ``if'' part, consider the following homotopy commutative diagram of homotopy cofibrations
    \begin{equation*}
      \begin{tikzcd}
        S^5\ar[r,"u'"]\ar[d,equal]&\Sigma X_1\wedge X_2\ar[d,"{[\iota_1,\iota_2]}"]\ar[r,"i'"]&C_{u'}\ar[d]\\
        S^5\ar[r,"u"]\ar[d]&\Sigma X_1\vee \Sigma X_2\ar[d,"j"]\ar[r,"i"]&C_{u}\ar[d,"\bar{j}"]\\
        \ast\ar[r]&\Sigma X_1\times\Sigma X_2\ar[r,equal]&\Sigma X_1\times \Sigma X_2
      \end{tikzcd},
    \end{equation*}
which induces the commutative diagram with exact rows and columns:
\[\begin{tikzcd}
  H^5(C_{u'};G)\ar[r,"(i')^\ast"]\ar[d,"\delta_1"]&H^5(\Sigma X_1\wedge X_2;G)\ar[d,"\delta_2"]\ar[r,"(u')^\ast"]&H^5(S^5;G)\\
H^6(\Sigma X_1\times\Sigma X_2;G)\ar[r,equal]\ar[d,"\bar{j}^\ast"]&H^6(\Sigma X_1\times\Sigma X_2;G)\ar[d]\\
H^6(C_u;G)\ar[r]&H^6(\Sigma X_1\vee \Sigma X_2;G)=0
\end{tikzcd}.\]
Note that $H^6(\Sigma X_1\times\Sigma X_2;G)$ is generated by cup products, while all cup products in $H^6(C_u;G)$ are trivial by assumption. It follows that $\bar{j}^\ast=0$ and hence $\delta_1$ is surjective. The homomorphism $\delta_2$ is obviously an isomorphism for $X_1,X_2\in\{S^2,P^3(2^r)\}$ because $H^5(\Sigma X_1\vee \Sigma X_2;G)=0$; for $X_2=C^4_s$, $X_1=S^2,P^3(2^r)$ or $C^4_r$, we have $H^j(C^4_s;G)\cong G$ for $j=2,3,4$, where $G=\z{s}$ or $\z{\min\{r,s\}}$. By computations,
 \begin{align*}
  H^5(\Sigma X_1\wedge C^4_s;G)&\cong \bigoplus_{i+j=5}\tilde{H}^i(\Sigma X_1;\tilde{H}^{j}(C^4_s;G))\cong H^3(\Sigma X_1;H^2(C^4_s;G)),\\ H^6(\Sigma X_1\times C^5_s;G)&\cong \bigoplus_{i+j=6} H^i(\Sigma X_1;H^{j}(C^5_s;G))\cong H^3(\Sigma X_1;H^3(C^5_s;G)). 
 \end{align*}
Thus $\delta_2$ is an isomorphism for all $X_1,X_2$. The upper commutative square then implies that $(i')^\ast$ is surjective and therefore $(u')^\ast$ is the zero map by exactness.
Since $\Sigma X_1\wedge X_2$ is $4$-connected, the universal coefficient theorem for cohomology implies that
\[0=(u')_\ast\colon H_5(S^5)\to H_5(\Sigma X_1\wedge X_2).\] 
Therefore $u'$ is null-homotopic, by the Hurewicz theorem.
  \end{proof}
\end{lemma}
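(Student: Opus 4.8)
The plan is to recognise $u$ as the first stage of the standard ``product decomposition'' of $\Sigma X_1\times\Sigma X_2$ and to measure how $C_u$ differs from the trivial cofibre $S^6\vee\Sigma X_1\vee\Sigma X_2$. Concretely, I would start from the well-known cofibration
\[\Sigma X_1\wedge X_2\xra{[\iota_1,\iota_2]}\Sigma X_1\vee\Sigma X_2\xra{j}\Sigma X_1\times\Sigma X_2,\]
in which $j$ is the inclusion and $[\iota_1,\iota_2]$ is the generalised Whitehead product. Since $u=[\iota_1,\iota_2]\circ u'$, splicing the cofibre sequences of $u'$, $u$ and $\mathrm{id}_{\Sigma X_1\times\Sigma X_2}$ produces a $3\times 3$ diagram of homotopy cofibrations whose third column is a cofibration $C_{u'}\to C_u\xra{\bar j}\Sigma X_1\times\Sigma X_2$; applying $H^\ast(-;G)$ then gives a commuting ladder of long exact sequences. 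The key observation is that $H^6(\Sigma X_1\times\Sigma X_2;G)$ is, in this range, generated by external products of positive-degree classes pulled back from the two factors — classes forced to live in degree $3$ — and that $\bar j^\ast$ carries such a product $a_1\times a_2$ to the cup product $\bar j^\ast a_1\smile\bar j^\ast a_2$ in $H^\ast(C_u;G)$.

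The ``only if'' direction is immediate: if $u'\simeq\ast$ then $u\simeq\ast$, so $C_u\simeq S^6\vee\Sigma X_1\vee\Sigma X_2$, a wedge of suspensions and a sphere in which every cup product of reduced classes vanishes. For the converse, assume all cup products in $H^\ast(C_u;G)$ are trivial and run the ladder chase in three moves. (i) By the observation above the hypothesis forces $\bar j^\ast=0$ on $H^6(\Sigma X_1\times\Sigma X_2;G)$, so the connecting map $\delta_1\colon H^5(C_{u'};G)\to H^6(\Sigma X_1\times\Sigma X_2;G)$ of the third column is onto. (ii) The connecting map $\delta_2\colon H^5(\Sigma X_1\wedge X_2;G)\to H^6(\Sigma X_1\times\Sigma X_2;G)$ of the middle column is an isomorphism: surjectivity holds because $H^6(\Sigma X_1\vee\Sigma X_2;G)=0$, and injectivity because composing $[\iota_1,\iota_2]$ with either pinch $\Sigma X_1\vee\Sigma X_2\to\Sigma X_i$ is null-homotopic (a Whitehead product involving a constant map), so $[\iota_1,\iota_2]^\ast$ annihilates all of $H^5(\Sigma X_1\vee\Sigma X_2;G)$; for the pairs containing $C^4_s$ one may instead simply match the Künneth computations of $H^5(\Sigma X_1\wedge C^4_s;G)$ and $H^6(\Sigma X_1\times C^5_s;G)$. (iii) The square expressing $\delta_1=\delta_2\circ(i')^\ast$ commutes, so $(i')^\ast$ is surjective, whence $(u')^\ast=0$ on $H^5(-;G)$ by exactness of the cofibre sequence of $u'$.

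To conclude, note that $\Sigma X_1\wedge X_2=\Sigma(X_1\wedge X_2)$ is $4$-connected with $H_5(\Sigma X_1\wedge X_2)\cong H_2(X_1)\otimes H_2(X_2)=G$. The universal coefficient theorem turns $(u')^\ast=0$ on $H^5(-;G)$ into the vanishing of $(u')_\ast\colon H_5(S^5)\to H_5(\Sigma X_1\wedge X_2)$ (the UCT identifies $(u')^\ast$ with the map on $\mathrm{Hom}(-,G)$ induced by $(u')_\ast$, so evaluating at the identity of $G$ recovers $(u')_\ast$), and the Hurewicz theorem then identifies this with $u'$ being null-homotopic.

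The step I expect to need the most care is (ii): ensuring that $\delta_2$ really is an isomorphism for \emph{every} admissible pair $(X_1,X_2)$, especially when $X_1$ or $X_2$ equals $C^4_s$, since then $\Sigma X_i$ carries cohomology up through degree $5$ and one must track the Künneth and Tor contributions to both the smash product and the Cartesian product. The uniform argument through $[\iota_1,\iota_2]^\ast=0$ dodges that bookkeeping, but it tacitly uses that cup products with $G$-coefficients are meaningful, which is fine here only because each relevant coefficient group $G$ is cyclic, so $G\otimes G\cong G$ via multiplication.
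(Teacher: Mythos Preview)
Your argument is correct and follows essentially the same route as the paper: the same $3\times 3$ diagram of cofibrations, the same chase through $\bar j^\ast=0\Rightarrow\delta_1$ onto $\Rightarrow(i')^\ast$ onto $\Rightarrow(u')^\ast=0$, and the same UCT/Hurewicz finish. The one genuine difference is your treatment of step~(ii): the paper verifies that $\delta_2$ is an isomorphism by noting $H^6(\Sigma X_1\vee\Sigma X_2;G)=0$ for surjectivity and then, when a $C^4_s$ factor is present, matching Künneth computations of source and target to conclude; your observation that $p_i\circ[\iota_1,\iota_2]\simeq\ast$ kills $[\iota_1,\iota_2]^\ast$ on all of $H^5(\Sigma X_1\vee\Sigma X_2;G)$ gives injectivity of $\delta_2$ uniformly and avoids that case analysis altogether, which is a small but genuine simplification.
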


\begin{lemma}\label{lem:Sq2-Chang}
 The Steenrod square $\Sq^2\colon H^{n}(C;\z{})\to H^{n+2}(C;\z{})$
is an isomorphism for every $(n+2)$-dimensional elementary Chang complex $C$.
\begin{proof}
  Obvious or see \cite{ZP17}.
\end{proof}
\end{lemma}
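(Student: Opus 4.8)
The plan is to treat the four families of elementary Chang complexes in turn, reducing the latter three to the first by naturality of $\Sq^2$.

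For $C^{n+2}_\eta=\Sigma^{n-2}\CP{2}$ I would argue directly: since $\Sq^2$ is stable it suffices to take $n=2$, and there $\Sq^2$ sends the generator $x\in H^2(\CP{2};\z{})$ to $x\smallsmile x$, which generates $H^4(\CP{2};\z{})\cong\z{}$ (equivalently, $\eta$ has Hopf invariant one, cf. Lemma \ref{lem:cupprod}). Hence $\Sq^2\colon H^n(C^{n+2}_\eta;\z{})\to H^{n+2}(C^{n+2}_\eta;\z{})$ is an isomorphism.

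For the remaining three types the strategy is uniform: exhibit a homotopy cofibration relating the complex in question to $C^{n+2}_\eta$ or to $C^{n+2}_r$ whose third term has reduced $\z{}$-cohomology concentrated in degree $n+1$; then the remaining map of the cofibration induces isomorphisms on $H^n(-;\z{})$ and $H^{n+2}(-;\z{})$ by the long exact sequence, and naturality of $\Sq^2$ settles that case. Concretely: in $C^{n+2}_r=P^{n+1}(2^r)\cup_{i_n\eta}\bm{C}S^{n+1}$ the top-cell attaching map $i_n\eta=i_n\circ\eta$ factors through the bottom sphere $S^n\subset P^{n+1}(2^r)$, so $C^{n+2}_\eta=S^n\cup_\eta\bm{C}S^{n+1}$ is a subcomplex (the $n$- and $(n+2)$-cells) with quotient $S^{n+1}$, and I would use the inclusion $i_\eta\colon C^{n+2}_\eta\hookrightarrow C^{n+2}_r$. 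In $C^{n+2,s}=S^n\cup_{\eta q_{n+1}}\bm{C}P^{n+1}(2^s)$ the attaching map restricts to $q_{n+1}i_n\simeq\ast$ on the bottom sphere of $P^{n+1}(2^s)$, so the corresponding cone cell forms an $S^{n+1}$ subcomplex; collapsing it and using $\eta q_{n+1}=\eta\circ q_{n+1}$ identifies the quotient with $S^n\cup_\eta\bm{C}S^{n+1}=C^{n+2}_\eta$, and I would use this collapse map. Replacing the base sphere $S^n$ by $P^{n+1}(2^r)$ verbatim handles $C^{n+2,s}_r=P^{n+1}(2^r)\cup_{i_n\eta q_{n+1}}\bm{C}P^{n+1}(2^s)$, whose quotient by the evident $S^{n+1}$ subcomplex is $C^{n+2}_r$, reducing it to the previous case.

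I expect the only delicate points to be the cell-level identifications of these subcomplexes and quotients — each a short diagram chase from the factorizations $i_n\eta=i_n\circ\eta$, $\eta q_{n+1}=\eta\circ q_{n+1}$ together with $q_{n+1}i_n\simeq\ast$ — and the routine check that the third term of each cofibration really has reduced $\z{}$-cohomology only in degree $n+1$. I do not anticipate a genuine obstacle here; alternatively, one may simply quote the full table of Steenrod actions on elementary Chang complexes from \cite{ZP17}.
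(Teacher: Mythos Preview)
Your proposal is correct. The paper's own proof is simply ``Obvious or see \cite{ZP17}'', so you have supplied the details the authors omit: the base case $C^{n+2}_\eta$ via Hopf invariant one, and the reduction of the other three families by cofibrations with third term concentrated in degree $n+1$ (these cofibrations follow cleanly from the octahedral axiom applied to the factorizations $i_n\eta=i_n\circ\eta$ and $\eta q_{n+1}=\eta\circ q_{n+1}$, exactly as you outline). There is nothing to compare beyond noting that your argument is an explicit realization of what the paper deems routine.
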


For $n\geq 3$ and $r\geq 1$, we define homotopy cofibres
\begin{equation}\label{eq:An+3}
   A^{n+3}(\tilde{\eta}_r)=P^{n+1}(2^r)\cup_{\tilde{\eta}_r}e^{n+3},~~A^{n+3}(i_P\tilde{\eta}_r)=C^{n+2}_r\cup_{i_P\tilde{\eta}_r}e^{n+3}.
\end{equation}

\begin{lemma}\label{lem:Sq2}
  The Steenrod square $\Sq^2\colon H^{n+1}(X;\z{})\to H^{n+3}(X;\z{})$
  is an isomorphism for $X= A^{n+3}(\tilde{\eta}_r)$ and $A^{n+3}(i_P\tilde{\eta}_r)$.
  \begin{proof}
    The statement for $X= A^{n+3}(\tilde{\eta}_r)$ refers to \cite[Lemma 2.6]{lipc23}. For $X=A^{n+3}(i_P\tilde{\eta}_r)$, consider the homotopy commutative diagram of homotopy cofibrations
    \[\begin{tikzcd}
      S^{n+2}\ar[d,equal]\ar[r,"\tilde{\eta}_r"]&P^{n+1}(2^r)\ar[d,"i_P"]\ar[r]&A^{n+3}(\tilde{\eta}_r)\ar[d,"\imath"]\\
      S^{n+2}\ar[d]\ar[r,"i_P\tilde{\eta}_r"]&C^{n+2}_r\ar[r]\ar[d,"q_{n+2}"]&A^{n+3}(i_P\tilde{\eta}_r)\ar[d]\\
      \ast\ar[r]&S^{n+2}\ar[r,equal]&S^{n+2}
    \end{tikzcd}.\]
    From the first two rows of the homotopy commutative diagram, it is easy to compute that 
  \[ H^{n+i}(A^{n+3}(\tilde{\eta}_r);\Z/2)\cong H^{n+i}(A^{n+3}(i_P\tilde{\eta}_r);\Z/2)\cong\Z/2 \text{ for $i=1,3$}.\]
  The third column homotopy cofibration implies that the induced homomorphisms $\imath^\ast$ are monomorphisms of mod $2$ homology groups of dimension $n+1$ and $n+3$, hence it is an isomorphism.
Then we complete the proof by the naturality of $\Sq^2$.
  \end{proof}
\end{lemma}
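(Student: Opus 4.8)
The plan is to reduce both cases, by collapsing the bottom sphere $S^n$ of the Moore space $P^{n+1}(2^r)$ (a subcomplex of each of $A^{n+3}(\tilde{\eta}_r)$ and $A^{n+3}(i_P\tilde{\eta}_r)$), to Lemma~\ref{lem:Sq2-Chang} applied to the elementary Chang complex $C^{n+3}_\eta=S^{n+1}\cup_\eta e^{n+3}$, on which $\Sq^2\colon H^{n+1}(-;\z{})\to H^{n+3}(-;\z{})$ is an isomorphism.

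First I would identify the quotients by this $S^n$. Since $P^{n+1}(2^r)/S^n\simeq S^{n+1}$ and $q_{n+1}\tilde{\eta}_r=\eta$ by (\ref{eq:chi-eta}), collapsing $S^n$ in $A^{n+3}(\tilde{\eta}_r)=P^{n+1}(2^r)\cup_{\tilde{\eta}_r}e^{n+3}$ produces exactly $C^{n+3}_\eta$. For $A^{n+3}(i_P\tilde{\eta}_r)=C^{n+2}_r\cup_{i_P\tilde{\eta}_r}e^{n+3}$, the top Moore cell of the subcomplex $P^{n+1}(2^r)\subset C^{n+2}_r$ is attached to $S^n$ through $i_n\eta$, so collapsing $S^n$ trivialises that attaching map and gives $C^{n+2}_r/S^n\simeq S^{n+1}\vee S^{n+2}$; moreover the composite $P^{n+1}(2^r)\xra{i_P}C^{n+2}_r\to C^{n+2}_r/S^n$ is the pinch $q_{n+1}\colon P^{n+1}(2^r)\to S^{n+1}$ followed by the inclusion of the $S^{n+1}$ wedge summand, so precomposing with $\tilde{\eta}_r$ and using $q_{n+1}\tilde{\eta}_r=\eta$ once more shows $A^{n+3}(i_P\tilde{\eta}_r)/S^n\simeq C^{n+3}_\eta\vee S^{n+2}$.

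Next I would check that each collapse map $c\colon X\to X/S^n$ induces isomorphisms on $H^{n+1}(-;\z{})$ and $H^{n+3}(-;\z{})$. The cofibre of $c$ is $\Sigma S^n=S^{n+1}$, so I would run the long exact cohomology sequence of $S^n\to X\xra{c}X/S^n\to S^{n+1}$: since $S^n$ is the bottom cell of $X$, the restriction $H^n(X;\z{})\to H^n(S^n;\z{})$ is an isomorphism, which forces the connecting map $H^n(S^n;\z{})\to H^{n+1}(X/S^n;\z{})$ to vanish and hence $c^\ast$ to be an isomorphism in degree $n+1$; in degree $n+3$ it is an isomorphism for dimension reasons, as $H^{n+2}(S^n;\z{})=H^{n+3}(S^n;\z{})=0$. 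This uses only the elementary computations $\tilde H^j(A^{n+3}(\tilde{\eta}_r);\z{})=\z{}$ for $j\in\{n,n+1,n+3\}$ and $\tilde H^j(A^{n+3}(i_P\tilde{\eta}_r);\z{})=\z{}$ for $j\in\{n,n+1,n+2,n+3\}$, which hold because the top cell of each complex is attached through $\tilde{\eta}_r\colon S^{n+2}\to P^{n+1}(2^r)$ and is therefore trivial in $\z{}$-homology. Finally, since $\Sq^2$ is an isomorphism on $C^{n+3}_\eta$ by Lemma~\ref{lem:Sq2-Chang}, hence on $C^{n+3}_\eta\vee S^{n+2}$ (the summand $S^{n+2}$ contributes nothing in degrees $n+1$ and $n+3$), the naturality of $\Sq^2$ along $c^\ast$ transports the isomorphism back to $A^{n+3}(\tilde{\eta}_r)$ and $A^{n+3}(i_P\tilde{\eta}_r)$.

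The only step needing genuine care is the identification $A^{n+3}(i_P\tilde{\eta}_r)/S^n\simeq C^{n+3}_\eta\vee S^{n+2}$, i.e.\ checking that the surviving attaching map has no component on the $S^{n+2}$ summand; but this is exactly $q_{n+1}\tilde{\eta}_r=\eta$ together with the factorisation of $i_P$ through the bottom Moore space, and everything else is routine diagram and exact-sequence chasing. An alternative that sidesteps computing $C^{n+2}_r/S^n$ is to compare the two complexes directly along $i_P\colon P^{n+1}(2^r)\to C^{n+2}_r$: this fits into a $3\times 3$ diagram of homotopy cofibrations whose first column is $\id_{S^{n+2}}$ and whose second column is $P^{n+1}(2^r)\xra{i_P}C^{n+2}_r\xra{q_{n+2}}S^{n+2}$, so the induced map $A^{n+3}(\tilde{\eta}_r)\to A^{n+3}(i_P\tilde{\eta}_r)$ has cofibre $S^{n+2}$ and hence induces isomorphisms on $H^{n+1}(-;\z{})$ and $H^{n+3}(-;\z{})$, after which naturality of $\Sq^2$ finishes the argument in the same way.
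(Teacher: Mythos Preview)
Your proof is correct, and in fact your \emph{alternative} at the end is exactly the paper's argument: the paper cites an external reference for $A^{n+3}(\tilde{\eta}_r)$ and then, for $A^{n+3}(i_P\tilde{\eta}_r)$, builds precisely the $3\times 3$ diagram you describe (columns $\id_{S^{n+2}}$, $P^{n+1}(2^r)\xra{i_P}C^{n+2}_r\xra{q_{n+2}}S^{n+2}$, and the induced $A^{n+3}(\tilde{\eta}_r)\xra{\imath}A^{n+3}(i_P\tilde{\eta}_r)\to S^{n+2}$), reads off that $\imath^\ast$ is an isomorphism on $H^{n+1}$ and $H^{n+3}$, and invokes naturality of $\Sq^2$.

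Your \emph{main} route---collapsing the bottom $S^n$ and reducing to $C^{n+3}_\eta$ via $q_{n+1}\tilde{\eta}_r=\eta$---is a genuinely different and somewhat cleaner packaging. It treats both complexes uniformly and is entirely self-contained within the paper: the $A^{n+3}(\tilde{\eta}_r)$ case no longer needs an external citation but follows directly from Lemma~\ref{lem:Sq2-Chang}. The cost is the extra step of identifying $C^{n+2}_r/S^n\simeq S^{n+1}\vee S^{n+2}$ and checking that the surviving attaching map has no $S^{n+2}$ component, which you handle correctly by noting that $i_P\tilde{\eta}_r$ factors through the $(n+1)$-skeleton $P^{n+1}(2^r)$. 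The paper's route is marginally shorter once the first case is taken on faith, since its comparison cofibre is a single sphere; yours buys independence from the cited lemma.
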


  \begin{lemma}[Lemma 6.4 of \cite{HL}]\label{lem:HL}
    Let $S\xra{f}(\bigvee_{i=1}^nA_i)\vee B \xra{g}\Sigma C$ be a homotopy cofibration of simply-connected CW-complexes. For each $j=1,\cdots, n$, let \[p_j\colon \big(\bigvee_{i}A_i\big)\vee B\to A_j,\quad q_B\colon \big(\bigvee_{i}A_i\big)\vee B\to B\] be the obvious projections. Suppose that the composite $p_jf$
  is null-homotopic for each $j\leq n$, then there is a homotopy equivalence
  \[\Sigma C\simeq \big(\bigvee_{i=1}^nA_i\big)\vee C_{q_Bf},\]
  where $C_{q_Bf}$ is the homotopy cofibre of the composite $q_Bf$.
  \end{lemma}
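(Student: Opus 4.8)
The plan is to construct a homotopy equivalence $\Phi\colon\Sigma C\to\big(\bigvee_{i=1}^nA_i\big)\vee C_{q_Bf}$ explicitly out of the co-$H$-structure of the suspension $\Sigma C$, and to verify it is an equivalence by a homology computation together with Whitehead's theorem. The role of the homology computation is precisely to circumvent the fact that $g|_B\colon B\to\Sigma C$ has no obvious reason to extend over $C_{q_Bf}$ — indeed $f$ need not factor through $B$, as Whitehead-product examples show. Write $W=\bigvee_{i=1}^nA_i$; since the mapping cone represents the homotopy cofibre, we may assume $\Sigma C=C_f=(W\vee B)\cup_f\bm{C}S$ with $g$ the inclusion, so that $g$, and hence its restriction $g_W:=g|_W\colon W\to\Sigma C$, is a cofibration of simply-connected CW-complexes.

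First I would upgrade each projection $p_j$ to a retraction of the cofibre. Since $p_jf\simeq\ast$, the map $p_j\colon W\vee B\to A_j$ extends over the cofibration $S\xra{f}W\vee B\xra{g}\Sigma C$ to a map $r_j\colon\Sigma C\to A_j$ with $r_jg\simeq p_j$; hence $r_j\,g_W\simeq p_j|_W$, which is the projection $W\to A_j$, so $r_j\,g_W|_{A_j}\simeq\id_{A_j}$ and $r_j\,g_W|_{A_k}\simeq\ast$ for $k\neq j$. On reduced homology this means $R:=\bigoplus_j(r_j)_\ast\colon\tilde H_\ast(\Sigma C)\to\bigoplus_j\tilde H_\ast(A_j)=\tilde H_\ast(W)$ is a left inverse of $(g_W)_\ast$; in particular $(g_W)_\ast$ is split injective, so in the homology exact sequence of the cofibration $W\xra{g_W}\Sigma C\xra{q}Z$, where $Z:=C_f/W$ and $q$ is the collapse map, the connecting homomorphisms vanish and $q_\ast$ is surjective with $\ker q_\ast=\im(g_W)_\ast$.

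Next I would define $\Phi$ and check it. Let $\nu\colon\Sigma C\to(\Sigma C)^{\vee(n+1)}$ be an iterate of the suspension comultiplication and put
\[\Phi:=(r_1\vee\cdots\vee r_n\vee q)\circ\nu\colon\Sigma C\longrightarrow A_1\vee\cdots\vee A_n\vee Z=W\vee Z.\]
On reduced homology $\nu_\ast$ is the diagonal, so $\Phi_\ast(x)=\big(R(x),\,q_\ast(x)\big)\in\tilde H_\ast(W)\oplus\tilde H_\ast(Z)$. Injectivity is immediate: if $R(x)=0$ and $q_\ast(x)=0$ then $x=(g_W)_\ast(w)$ for some $w$, and $w=R(g_W)_\ast(w)=R(x)=0$. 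For surjectivity, given $(w,z)$ choose $x_0$ with $q_\ast(x_0)=z$, replace it by $x_1:=x_0-(g_W)_\ast R(x_0)$ (still $q_\ast(x_1)=z$, now $R(x_1)=0$), and set $x:=x_1+(g_W)_\ast(w)$; then $\Phi_\ast(x)=(w,z)$. Thus $\Phi_\ast$ is an isomorphism, and since $\Sigma C$ and $W\vee Z$ are simply-connected CW-complexes ($Z$ is $B$ with a cone attached, hence simply-connected), Whitehead's theorem shows $\Phi$ is a homotopy equivalence.

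Finally I would identify $Z$ with $C_{q_Bf}$: the quotient map $W\vee B\to(W\vee B)/W=B$ is exactly $q_B$, so collapsing $W$ inside $C_f=(W\vee B)\cup_f\bm{C}S$ gives $Z=C_f/W=B\cup_{q_Bf}\bm{C}S=C_{q_Bf}$. Combining, $\Sigma C\simeq W\vee C_{q_Bf}=\big(\bigvee_{i=1}^nA_i\big)\vee C_{q_Bf}$. The step I expect to be the real content is the homology computation of the second and third paragraphs: it is there that the hypothesis that $\Sigma C$ is a suspension (so that $\nu$ exists) enters, and it is what produces, through $\Phi^{-1}$, the splitting that one cannot conveniently write down by hand.
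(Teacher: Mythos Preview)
The paper does not give its own proof of this lemma: it is quoted verbatim as Lemma~6.4 of \cite{HL} and used as a black box, so there is nothing in the present paper to compare your argument against.

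That said, your argument is correct and is the standard way to prove such a splitting. The essential points are all present: (i) the null-homotopies of $p_jf$ produce retractions $r_j\colon\Sigma C\to A_j$ of $g|_{A_j}$; (ii) the hypothesis that the cofibre is a \emph{suspension} supplies the comultiplication $\nu$ needed to add the $r_j$ and the collapse $q$ into a single map $\Phi$; (iii) the homology check, using that $\nu_\ast$ is the diagonal and that $(g_W)_\ast$ is split injective, shows $\Phi_\ast$ is an isomorphism; (iv) Whitehead's theorem applies since everything is simply-connected; and (v) the identification $C_f/W\cong C_{q_Bf}$ is immediate from the mapping-cone model. Your remark that $f$ need not factor through $B$ because of Whitehead products is exactly the reason one cannot simply ``cancel'' the $A_i$ summands on the nose, and why the co-$H$-structure is genuinely needed. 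One small point: your justification that $Z$ is simply-connected (``$B$ with a cone attached'') is fine, but it tacitly uses that $S$ and $B$ are simply-connected, which is part of the hypothesis.
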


  \begin{lemma}\label{lem:splitting}
    Let $(\bigvee_{i=1}^nA_i)\vee B \xra{f}C\to D$ be a homotopy cofibration of CW-complexes. If the restriction of $f$ to $A_i$ is null-homotopic for each $i=1,\cdots,n$, then there is a homotopy equivalence 
    \[D\simeq \big(\bigvee_{i=1}^n\Sigma A_i\big)\vee E,\]
    where $E$ is the homotopy cofibre of the restriction $f|B\colon B\to C$.
    \begin{proof}
      Clear.
    \end{proof}
  \end{lemma}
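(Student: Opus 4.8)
The plan is to first replace $f$ by a homotopic map that literally ignores the summands $A_i$, and then to read off the homotopy cofibre directly from the reduced mapping cone. Set $W=(\bigvee_{i=1}^nA_i)\vee B$. A based map out of a wedge is determined up to based homotopy by its restrictions to the wedge summands, so since each $f|_{A_i}$ is null-homotopic, $f$ is homotopic to the map $f'=(f|_B)\circ\mathrm{pr}$, where $\mathrm{pr}\colon W\to B$ collapses every $A_i$; indeed $f'|_{A_i}$ is constant and $f'|_B=f|_B$. As homotopic maps have homotopy equivalent homotopy cofibres, $D=C_f\simeq C_{f'}$, and it suffices to identify $C_{f'}$.

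Next I would compute $C_{f'}$ using the reduced mapping cone $C_{f'}=C\cup_{f'}\bm{C}W$, the cone being attached to $C$ along its base $W\cong W\wedge\{0\}$ via $f'$. Since the reduced cone is a smash with an interval and the smash product distributes over wedges of (well-pointed) CW-complexes, $\bm{C}W\cong\bm{C}B\vee\bigvee_{i=1}^n\bm{C}A_i$, compatibly with the base decomposition $W=B\vee\bigvee_iA_i$. Under $f'$ the subcone $\bm{C}B$ is glued to $C$ along $f|_B$, contributing precisely the homotopy cofibre $E=C\cup_{f|_B}\bm{C}B$, while each subcone $\bm{C}A_i$ is glued to $C$ along the constant map to the basepoint. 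Attaching a cone $\bm{C}A_i$ along the constant map identifies its base $A_i\wedge\{0\}$ with the basepoint, and $\bm{C}A_i/(A_i\wedge\{0\})\cong\Sigma A_i$, so each such attachment simply wedges on a copy of $\Sigma A_i$ at the basepoint. These cones are attached along maps meeting only at the basepoint, hence they contribute independent wedge summands, and we obtain $D\simeq C_{f'}\simeq E\vee\bigvee_{i=1}^n\Sigma A_i$, as claimed.

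This lemma is genuinely elementary — the text merely says ``Clear'' — so there is no serious obstacle; the only two points that merit a word of care are that the null-homotopies of the individual $f|_{A_i}$ assemble into one based homotopy of $f$ (immediate from the universal property of $\bigvee_iA_i$ as a coproduct of pointed spaces), and that a cone attached along a constant map splits off as a suspension wedge summand, which is exactly where distributivity of the reduced cone over the wedge enters. Alternatively, one may argue without coordinates: $f$ restricted to $\bigvee_iA_i$ is null-homotopic, so $f$ factors up to homotopy through the cofibre $B$ of the split inclusion $\bigvee_iA_i\hookrightarrow W$; the octahedral cofibre sequence of a composite then yields a cofibration $\bigvee_i\Sigma A_i\to C_f\to C_{f|_B}$, and the section $B\to W$ of $\mathrm{pr}$ realizes $C_{f|_B}$ as a retract of $C_f$, forcing this cofibration to split. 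Either way the decomposition $D\simeq(\bigvee_{i=1}^n\Sigma A_i)\vee E$ follows.
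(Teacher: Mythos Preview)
Your argument is correct and supplies exactly the details that the paper omits (the paper's proof is literally the single word ``Clear''). Both your mapping-cone computation and your alternative octahedral argument are standard and sound; there is nothing to compare since the paper gives no proof at all.
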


Let $X=\Sigma X'$, $Y_i=\Sigma Y_i'$ be suspensions, $i=1,2,\cdots,n$. Let \[i_l\colon Y_l\to \bigvee_{j=i}^nY_i,\quad p_k\colon \bigvee_{i=1}^n Y_i\to Y_k\] be respectively the canonical inclusions and projections, $1\leq k,l\leq n$. By the Hilton-Milnor theorem,  we may write a map
  \(f\colon X\to\bigvee_{i=1}^nY_i\)
  as \[f=\sum_{k=1}^n i_k\circ f_{k}+\theta,\]
  where $f_{k}=p_k\circ f\colon X\to Y_k$ and $\theta$ satisfies $\Sigma \theta=0$. The first part $\sum_{k=1}^n i_k\circ f_{k}$ is usually represented by a vector $u_f=(f_1,f_2,\cdots,f_n)^t.$ 
  We say that $f$ is completely determined by its components $f_k$ if $\theta=0$; in this case, denote $f=u_f$. Let $h=\sum_{k,l}i_lh_{lk}p_k$ be a self-map of $\bigvee_{i=1}^nY_i$ which is completely determined by its components $h_{kl}=p_k\circ h\circ i_l\colon Y_l\to Y_k$. Denote by   
  \[M_h\coloneqq (h_{kl})_{n\times n}=\begin{bmatrix}
    h_{11}&h_{12}&\cdots&h_{1n}\\
    h_{21}&h_{22}&\cdots&h_{2n}\\
    \vdots&\vdots&\ddots&\vdots\\
    h_{n1}&h_{n1}&\cdots&h_{nn}
  \end{bmatrix}\] 
  Then the composition law
  \(h(f+g)\simeq h f+h g\)
  implies that the product
  \[M_h[f_1,f_2,\cdots,f_n]^t\]
  given by the matrix multiplication represents the composite $h\circ f$.
  Two maps $f=u_f$ and $g=u_g$ are called \emph{equivalent}, denoted by 
  \[[f_1,f_2,\cdots,f_n]^t\sim [g_1,g_2,\cdots,g_n]^t,\]
  if there is a self-homotopy equivalence $h$ of $\bigvee_{i=1}^n Y_i$, which can be represented by the matrix $M_h$, such that 
  \[M_h[f_1,f_2,\cdots,f_n]^t\simeq [g_1,g_2,\cdots,g_n]^t.\] 
  Note that the above matrix multiplication refers to elementary row operations in matrix theory; and the homotopy cofibres of the maps $f=u_f$ and $g=u_g$ are homotopy equivalent if $f$ and $g$ are equivalent.

\section{Homology Decomposition of $\Sigma M$}\label{sec:homoldecomp}

Recall the homology decomposition of a simply-connected space $X$ (cf. \cite[Theorem 4H.3]{hatcherbook}). For $n\geq 2$, the \emph{$n$-th homology section} $X_n$ of $X$ is a CW-complex constructed from $X_{n-1}$ by attaching a cone on a Moore space $M(H_nX,n-1)$;  by definition, $X_1=\ast$. Note that for each $n\geq 2$, there is a canonical map $j_n\colon X_n\to X$ that induces an isomorphism $j_{n\ast}\colon H_r(X_n)\to H_r(X)$ for $r\leq n$ and $H_r(X_n)=0$ for $r>n$.

%In this section we determine the homotopy types of the first five homology sections of $\Sigma M$.
Firstly we note that similar arguments to the proof of \cite[Lemma 5.1]{ST19} proves the following lemma.
\begin{lemma}\label{lem:ST}
  Let $M$ be an orientable closed manifold with $H_1(M)\cong\Z^l\oplus H$, where $l\geq 1$ and $H$ is a torsion abelian group. Then there is a homotopy equivalence 
\[\Sigma M\simeq \bigvee_{i=1}^l S^2\vee \Sigma W,\]
where $W=M/\bigvee_{i=1}^l S^1$ is the quotient space with $H_1(W)\cong H$.
\end{lemma}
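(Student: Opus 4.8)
The plan is to follow the strategy of \cite[Lemma 5.1]{ST19}, which rests on splitting off the wedge of circles coming from the free part of $H_1(M)$ \emph{before} suspending, using Poincar\'e duality to control what remains. First I would recall that since $M$ is an orientable closed $n$-manifold (here $n=5$, but the argument is dimension-independent), Poincar\'e duality and the fact that $H_1(M)\cong\Z^l\oplus H$ give $H_{n-1}(M)\cong H^1(M)\cong\Z^l$ together with a torsion summand, and in particular $H_1(M;\Z)$ has free rank $l$. Choose maps $S^1\to M$ representing a basis of the free part $\Z^l$; assembling them gives a map $\varphi\colon\bigvee_{i=1}^l S^1\to M$. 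The quotient $W=M/\varphi(\bigvee_{i=1}^l S^1)$ then has $H_1(W)\cong H$ by the long exact sequence of the pair, and $H_r(W)\cong H_r(M)$ for $r\geq 2$ in the relevant range (one checks the connecting maps vanish using that $\varphi_*$ is injective on $H_1$).

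Next I would produce, after one suspension, a splitting $\Sigma M\simeq \bigl(\bigvee_{i=1}^l S^2\bigr)\vee\Sigma W$. The key point is that $\Sigma\varphi\colon\bigvee_{i=1}^l S^2\to\Sigma M$ admits a left homotopy inverse: one must build a map $\Sigma M\to\bigvee_{i=1}^l S^2$ restricting to the identity (in cohomology, a basis of $H^2$). Such a retraction exists because the $l$ classes in $H^2(\Sigma M;\Z)\cong H^1(M;\Z)/\mathrm{torsion}\oplus\cdots$ dual to the chosen circles can be realized by maps to $S^2$ after suspension (there is no obstruction since $S^2$ is the Eilenberg--Mac\,Lane-like target in the bottom degree and $\Sigma M$ is a suspension, so $[\Sigma M,S^2]$ surjects onto $H^2(\Sigma M;\Z)$ via the obvious argument on the low skeleton, the higher obstructions lying in groups into which these classes inject). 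Having a retraction, the standard cofibration argument (as in \cite{ST19}, or via Lemma \ref{lem:HL}-type splitting) shows $\Sigma M$ splits as the wedge of $\bigvee_{i=1}^l S^2$ with the cofibre of $\Sigma\varphi$, which is precisely $\Sigma(M/\varphi(\bigvee S^1))=\Sigma W$.

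I expect the main obstacle to be the construction of the retraction $\Sigma M\to\bigvee_{i=1}^l S^2$, i.e.\ verifying that the cohomology classes dual to the chosen circles are genuinely realized by maps to spheres after a single suspension and that they can be chosen mutually compatibly into a single map to the wedge. This is where orientability and Poincar\'e duality enter essentially: they guarantee that the relevant degree-$2$ cohomology of $\Sigma M$ is free of rank $l$ on exactly these classes with no interference from torsion that could obstruct lifting through $S^2\simeq K(\Z,2)$ in the bottom cell. Once this retraction is in hand, the remaining steps — identifying the cofibre with $\Sigma W$ and computing $H_1(W)\cong H$ — are routine diagram chases with the long exact homology sequence of the cofibration $\bigvee_{i=1}^l S^1\to M\to W$.
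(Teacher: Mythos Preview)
Your overall strategy---choose circles generating the free part of $H_1(M)$, form the quotient $W$, and split off $\bigvee S^2$ after suspension via a retraction---is exactly the approach of \cite[Lemma 5.1]{ST19} that the paper invokes. The homology computation for $W$ is fine.

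The gap is in your construction of the retraction $\Sigma M\to\bigvee_{i=1}^l S^2$. You attempt to realize the relevant classes in $H^2(\Sigma M;\Z)$ by maps to $S^2$ directly, hand-waving about obstructions (``$S^2$ is the Eilenberg--Mac\,Lane-like target in the bottom degree''). But $S^2$ is \emph{not} a $K(\Z,2)$, and for a general closed $n$-manifold the obstructions to lifting a class in $H^2(\Sigma M;\Z)\cong[\Sigma M,K(\Z,2)]$ to $[\Sigma M,S^2]$ live in groups like $H^4(\Sigma M;\pi_3 S^2)$, $H^5(\Sigma M;\pi_4 S^2)$, \ldots, which need not vanish. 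Your parenthetical does not address this.

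The clean fix---and this is the actual content of the argument in \cite{ST19}---is to work \emph{before} suspending. Since $S^1=K(\Z,1)$, one has $[M,S^1]\cong H^1(M;\Z)\cong\Z^l$ on the nose (note $H^1$ is always torsion-free by the universal coefficient theorem, so your ``$/\mathrm{torsion}$'' is superfluous and Poincar\'e duality is not needed here). Choose maps $\alpha_i\colon M\to S^1$ representing a basis dual to your $\varphi_i\colon S^1\to M$, so that $\alpha_j\varphi_i$ has degree $\delta_{ij}$. Then $\Sigma\alpha_i\colon\Sigma M\to S^2$ are the maps you want, with no obstruction theory required; assemble them using the co-$H$ structure on $\Sigma M$ (or suspend $M\to\prod_i S^1$ and project) to obtain the retraction. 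With this in hand, your final paragraph goes through.
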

By Lemma \ref{lem:ST} and (\ref{HM}), the homology groups of $\Sigma W$ is given by  
\begin{equation}\label{HSW}
  \begin{tabular}{ccccccc}
    \toprule 
    $i$&$2$&$3$&$4$&$5$&$0,6$& \text{otherwise} \\
    \midrule 
    $H_i(\Sigma W)$&$H$&$\Z^d\oplus T$&$\Z^d\oplus H$&$\Z^l$&$\Z$&$0$
   \\ \bottomrule
  \end{tabular}
\end{equation}
Let $W_i$ be the $i$-th homology section of $\Sigma W$. There are  homotopy cofibrations in which the attaching maps are \emph{homologically trivial} (induce trivial homomorphisms in integral homology):
\begin{equation}\label{Cofs}
  \begin{aligned}
   & \big(\bigvee_{i=1}^dS^2\big)\vee P^3(T)\xra{f} P^3(H)\to W_3,\\ 
   &\big(\bigvee_{i=1}^dS^3\big)\vee P^4(H)\xra{g} W_3\to W_4,\\
   &\bigvee_{i=1}^lS^4\xra{h} W_4\to W_5,\quad S^5\xra{\phi} W_5\to \Sigma W.
  \end{aligned}
\end{equation} 
%Note that there are natural inclusions $\Sigma W_{(i)}\subseteq W_{i+1}\subseteq \Sigma W_{(i+1)}$, where $W_{(i)}$ denotes the $i$-th skeleton of $W$. 
From now on we assume that $H\cong \bigoplus_{j=1}^h\Z/q_j^{s_j}$ where  $q_j$ are odd primes and $s_j\geq 1$.

\begin{lemma}\label{lem:W3}
There is a homotopy equivalence 
  \[W_3\simeq \big(\bigvee_{i=1}^dS^3\big)\vee P^3(H)\vee P^4(T).\]
\begin{proof}
 It suffices to show the map $f$ in (\ref{Cofs}) is null-homotopic, or equivalently the following components of $f$ are null-homotopic:
 \begin{align*}
  f^{S}&\colon \bigvee_{i=1}^dS^2\hookrightarrow \big( \bigvee_{i=1}^dS^2\big)\vee P^3(T)\xra{f}P^3(H),\\
  f^{T}&\colon P^3(T) \hookrightarrow \big(\bigvee_{i=1}^dS^2\big)\vee P^3(T)\xra{f}P^3(H),
 \end{align*}
where $\hookrightarrow$ denote the canonical inclusion maps. $f$ is homologically trivial, so are $f^{S}$ and $f^{T}$. Then the Hurewicz theorem and Lemma \ref{lem:Moore} (\ref{Moore-S3P3}) imply $f^{S}$ is null-homotopic.

Since $[P^3(p^r),P^3(q^s)]=0$ for different primes $p,q$, it suffices to consider the case where $T$ and $H$ have the same prime factors. Denote by $T_H\cong \bigoplus_{j}\Z/q_j^{r_j}$  the component of $T$ that has the same prime factors with $H$. 
The canonical inclusion $\imath_3\colon W_3\to \Sigma W$ induces an isomorphism with $m_j=\min\{r_j,s_j\}$:
\[\imath_3^\ast\colon H^2(\Sigma W;\Z/q_j^{m_j})\to H^2(W_3;\Z/q_j^{m_j}).\]
It follows that all the cup squares of cohomology classes of $H^2(W_3;\Z/q_j^{m_j})$, and hence of $H^2(C_{f^T};\Z/q_j^{m_j})$ are trivial for any $j$. Let $C_{f^T_j}$ be the homotopy cofibre of the compositions 
\[f^T_j\colon P^3(q_j^{r_j})\hookrightarrow P^3(T)\xra{f^T}P^3(H)
\twoheadrightarrow P^3(q_j^{s_j}),\]
where the unlabelled maps are the canonical inclusions and projections, respectively. 
Then \cite[Lemma 4.2]{ST19} implies that all cup squares of cohomology classes of $H^2(C_{f^T_j};\Z/q_j^{m_j})$ are trivial for any $j$ and hence $f^T_j$ is null-homotopic, by Lemma \ref{lem:cupprod}. Therefore $f^T$ is also null-homotopic and we complete the proof.
\end{proof}
\end{lemma}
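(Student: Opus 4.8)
The plan is to prove Lemma~\ref{lem:W3} by showing the attaching map $f$ in the first cofibration of (\ref{Cofs}) is null-homotopic. Since $P^3(H)$ is a wedge of mod-$q^s$ Moore spaces, $f$ is null-homotopic if and only if all of its components into the wedge summands are, and by naturality it is enough to treat the case where $T$ and $H$ share the same prime factors (for distinct primes the relevant homotopy sets vanish). Write $f^S$ and $f^T$ for the restrictions of $f$ to $\bigvee_{i=1}^d S^2$ and to $P^3(T)$ respectively; it suffices to kill both.

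First I would dispose of $f^S$. It is homologically trivial (since $f$ is), $\bigvee S^2$ is simply connected, so the Hurewicz theorem identifies $f^S$ with an element of $\pi_2$ of each Moore-space summand, and by Lemma~\ref{lem:Moore}(\ref{Moore-S3P3}) (specifically $\pi_3(P^3(p^r))$ has no contribution from a degree-$2$ class, and more to the point $\pi_2$ of $P^3(p^r)$ is torsion generated so a homologically trivial map out of $S^2$ is trivial) $f^S$ must be null-homotopic. Concretely, a homologically trivial map $S^2 \to P^3(p^r)$ lifts through the $2$-connected cover / factors through the next stage, but since $H_2$ detects $\pi_2$ here the conclusion is immediate.

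The substance is $f^T$. The idea is to detect it by cup-square (Hopf invariant) information. The canonical map $\imath_3\colon W_3 \to \Sigma W$ induces an isomorphism on $H^2(-;\Z/q_j^{m_j})$ with $m_j=\min\{r_j,s_j\}$ (both sides are the prescribed group in degree $2$), and $H^4(\Sigma W;\Z/q_j^{m_j})$ receives the cup product; but in $\Sigma W$ all cup products vanish because $\Sigma W$ is a suspension. Hence all cup squares of classes in $H^2(W_3;\Z/q_j^{m_j})$ vanish, and therefore the same holds in $H^2(C_{f^T};\Z/q_j^{m_j})$ (as the cofibre of $f^T$ maps to $W_3$ compatibly, or rather $W_3$ is the cofibre and pulls the classes back). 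Passing to the component $f^T_j\colon P^3(q_j^{r_j}) \to P^3(q_j^{s_j})$ obtained by including one summand of $T$ and projecting to one summand of $H$, the triviality of cup squares in $H^2$ of its cofibre persists (invoking \cite[Lemma 4.2]{ST19}), and then Lemma~\ref{lem:cupprod} says exactly that $f^T_j = 0$ because the mod $p^{m_j}$ Hopf invariant of $f^T_j$ equals its coefficient $k$, which is forced to be $0$. Running over all $j$ gives $f^T$ null-homotopic, hence $f$ is null-homotopic and the cofibration splits as the claimed wedge.

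The main obstacle is organizational rather than deep: one must be careful that the cup-square vanishing genuinely transfers from $\Sigma W$ down to the single Moore-to-Moore component $f^T_j$. This needs the two naturality inputs — the isomorphism on $H^2$ induced by $\imath_3$, and \cite[Lemma 4.2]{ST19} which lets one restrict attention to one cyclic summand at a time — together with the fact that $[P^3(p^r),P^3(q^s)]=0$ for distinct primes to reduce to matched prime factors in the first place. Once the Hopf invariant is identified (Lemma~\ref{lem:cupprod}), triviality is automatic.
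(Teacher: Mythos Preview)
Your proposal is correct and follows essentially the same route as the paper's own proof: split $f$ into $f^S$ and $f^T$, kill $f^S$ by Hurewicz (a homologically trivial map $S^2\to P^3(q^s)$ is null since $\pi_2(P^3(q^s))\cong H_2$), and kill each component $f^T_j$ by observing that cup squares vanish in $\Sigma W$ (being a suspension), transferring this to $W_3$ and then to $C_{f^T_j}$ via \cite[Lemma 4.2]{ST19}, and applying Lemma~\ref{lem:cupprod}. The only cosmetic difference is that your discussion of $f^S$ momentarily invokes $\pi_3$ before landing on the correct $\pi_2$ argument; the paper simply cites Hurewicz and Lemma~\ref{lem:Moore}(\ref{Moore-S3P3}) without elaboration.
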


\begin{lemma}\label{lem:W4}
 There is a homotopy equivalence 
\[W_4\simeq \big(\bigvee_{i=1}^d(S^3\vee S^4)\big)\vee P^3(H)\vee P^5(H)\vee P^4(T).\]

\begin{proof}
By (\ref{Cofs}) and Lemma \ref{lem:W3}, $W_4$ is the homotopy cofibre of a homologically trivial map 
\[\bar{g}\colon \big(\bigvee_{i=1}^dS^3\big)\vee P^4(H)\xra{g} W_3\xra[\simeq]{e} \big(\bigvee_{i=1}^dS^3\big)\vee P^3(H)\vee P^4(T). \]  
Consider the compositions 
\begin{align*}
 &S^3\hookrightarrow  \big(\bigvee_{i=1}^dS^3\big)\vee P^4(H)\xra{g}W_3\to \bigvee_{i=1}^dS^3\to S^3,\\
 &S^3\hookrightarrow  \big(\bigvee_{i=1}^dS^3\big)\vee P^4(H)\xra{g}W_3\to P^4(T),\\
&P^4(q_j^{s_j}) \hookrightarrow  \big(\bigvee_{i=1}^dS^3\big)\vee P^4(H)\xra{g}W_3\to \bigvee_{i=1}^dS^3\to S^3,\\
&P^4(q_j^{s_j}) \hookrightarrow  \big(\bigvee_{i=1}^dS^3\big)\vee P^4(H)\xra{g}W_3\to P^4(T)\to P^4(q_j^{r_j}),
\end{align*}
where the unlabelled maps are the canonical inclusions and projections. Since $[P^4(p^r),S^3]=0$, the Hurewicz theorem and Lemma \ref{lem:Moore} (\ref{Moore-P3P3}) imply that all the above compositions are null-homotopic. Hence by Lemma \ref{lem:HL} there is a homotopy equivalence
\[W_4\simeq \big(\bigvee_{i=1}^dS^3\big)\vee P^4(T)\vee C_{g'}\] 
for some map $g'\colon \big(\bigvee_{i=1}^dS^3\big)\vee P^4(H)\to P^3(H)$.

By the homology decomposition for $\Sigma W$ and the universal coefficient theorem for cohomology, the canonical map $\imath_4\colon W_4\to \Sigma W$ induces isomorphisms 
\[\imath_4^\ast\colon H^i(\Sigma W)\to H^i(W_4),~i=2,4.\]
Consider the commutative diagram 
\[\begin{tikzcd}
  H^2(\Sigma W;\Z/q_j^{s_j})\ar[r,"\smallsmile^2"]\ar[d,"\imath_4^\ast","\cong"swap]&H^4(\Sigma W;\Z/q_j^{s_j})\ar[d,"\imath_4^\ast","\cong"swap]\\
  H^2(W_4;\Z/q_j^{s_j})\ar[r,"\smallsmile^2"]&H^4(W_4;\Z/q_j^{s_j})
\end{tikzcd},\]
where $\smallsmile^2$ denotes the cup squares. All cup squares in $H^\ast(\Sigma W;\Z/q_j^{s_j})$ are trivial implying that all cup squares in $H^4(W_4;\Z/q_j^{s_j})$ are trivial. Let $C_{g'_j}$ and $C_{g'_{ij}}$ be the homotopy cofibres of the compositions 
\begin{align*}
  g'_j&\colon S^3\hookrightarrow\big(\bigvee_{i=1}^dS^3\big)\vee P^4(H)\xra{g'} P^3(H)\twoheadrightarrow P^3(q_j^{s_j}),\\
  g'_{ij}&\colon P^4(q_j^{r_i})\hookrightarrow\big(\bigvee_{i=1}^dS^3\big)\vee P^4(H)\xra{g'} P^3(H)\twoheadrightarrow P^3(q_j^{s_j}).
\end{align*}
By \cite[Lemma 4.2]{ST19}, we get the triviality of cup squares in $H^\ast(C_{g'_j};\Z/q_j^{s_j})$ and $H^\ast(C_{g'_{ij}};\Z/q_j^{s_j}))$. Then  Lemma \ref{lem:cupprod} and \ref{lem:cupprod2} imply that $g_j'$ and $g'_{ij}$ are both null-homotopic. Thus by Lemma \ref{lem:splitting}, there is a homotopy equivalence 
\[C_{g'}\simeq \big(\bigvee_{i=1}^dS^4\big)\vee P^3(H)\vee P^5(H),\]
which completes the proof of the Lemma.
\end{proof}
\end{lemma}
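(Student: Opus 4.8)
The plan is to realize $W_4$ as a homotopy cofibre and then peel off wedge summands one at a time. By the second homotopy cofibration in (\ref{Cofs}) together with Lemma \ref{lem:W3}, the space $W_4$ is the homotopy cofibre of a composite
\[\bar{g}\colon\big(\bigvee_{i=1}^d S^3\big)\vee P^4(H)\xra{g}W_3\xra{\simeq}\big(\bigvee_{i=1}^d S^3\big)\vee P^3(H)\vee P^4(T),\]
and since $g$, hence $\bar{g}$, is homologically trivial, so is every component of $\bar{g}$ obtained by pre- and post-composing with the canonical wedge inclusions and projections. The entire task is to upgrade homological triviality to null-homotopy, component by component, so that the splitting Lemmas \ref{lem:HL} and \ref{lem:splitting} apply.

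First I would dispose of the components landing in the spheres and in $P^4(T)$. A homologically trivial self-map of $S^3$ has degree $0$ and so is null; a homologically trivial map $S^3\to P^4(T)$ is null by the Hurewicz theorem, as $P^4(T)$ is $2$-connected; the groups $[P^4(q_j^{s_j}),S^3]$ vanish, as one sees by applying $[-,S^3]$ to the cofibration defining $P^4(q_j^{s_j})$; and the groups $[P^4(q_j^{s_j}),P^4(q_j^{r_j})]$ are cyclic, generated by a map of the form $B(\chi^{s_j}_{r_j})$ by Lemma \ref{lem:Moore}(\ref{Moore-P3P3}), in which the only homologically trivial element is $0$. Lemma \ref{lem:HL} then produces a homotopy equivalence $W_4\simeq\big(\bigvee_{i=1}^d S^3\big)\vee P^4(T)\vee C_{g'}$ for some map $g'\colon\big(\bigvee_{i=1}^d S^3\big)\vee P^4(H)\to P^3(H)$.

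The heart of the argument is to show that $g'$ is null-homotopic, so that $C_{g'}\simeq\big(\bigvee_{i=1}^d S^4\big)\vee P^5(H)\vee P^3(H)$. Here homological triviality is genuinely insufficient: the component $g'_j\colon S^3\to P^3(q_j^{s_j})$ could a priori be a nonzero multiple of the homologically trivial class $i_2\eta$, and the same holds for the components $g'_{ij}\colon P^4(q_j^{r_i})\to P^3(q_j^{s_j})$ coming from Lemma \ref{lem:Moore}(\ref{Moore-P4P3}). The key point is that $\Sigma W$ is a suspension, so all cup products in $H^\ast(\Sigma W;\Z/q_j^{s_j})$ vanish; since the structure map $\imath_4\colon W_4\to\Sigma W$ of the homology decomposition induces isomorphisms on $H^2$ and $H^4$, all cup squares in $H^\ast(W_4;\Z/q_j^{s_j})$ vanish as well. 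Passing to the relevant subcomplexes via \cite[Lemma 4.2]{ST19} forces the cup squares in the cofibres of $g'_j$ and $g'_{ij}$ to vanish, and then Lemma \ref{lem:cupprod} (for $g'_j$) and Lemma \ref{lem:cupprod2} (for $g'_{ij}$) — which in each case detect the component by its mod-$q_j$ Hopf invariant — force $g'_j$ and $g'_{ij}$ to be null-homotopic. Since a map out of a wedge is detected by its restrictions to the wedge summands, $g'$ is itself null-homotopic, so $C_{g'}$ splits as asserted by Lemma \ref{lem:splitting}. Assembling this with the previous paragraph yields the homotopy equivalence in the statement.

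The step I expect to be the main obstacle is this last one: one must recognize that the only obstruction to the null-homotopy of $g'$ lies in the cup-square (Hopf-invariant) information, and then exploit the fact that $W_4$ receives this information from the suspension $\Sigma W$, where it necessarily vanishes. Everything else is routine bookkeeping with the wedge-decomposition formalism of Section \ref{sec:prelim}.
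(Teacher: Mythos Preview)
Your proposal is correct and follows essentially the same route as the paper: split off the $S^3$- and $P^4(T)$-summands via Lemma~\ref{lem:HL} using the elementary homotopy-group computations of Lemma~\ref{lem:Moore}, then kill the remaining map $g'$ into $P^3(H)$ by pulling back the vanishing of cup squares from the suspension $\Sigma W$ through $\imath_4$, invoking \cite[Lemma 4.2]{ST19} to pass to the individual cofibres, and applying Lemmas~\ref{lem:cupprod} and~\ref{lem:cupprod2}. Your identification of the Hopf-invariant/cup-square step as the nontrivial one is exactly right.
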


\begin{proposition}\label{prop:W5}
  There is a homotopy equivalence 
 \begin{multline*}
  W_5\simeq P^3(H)\!\vee\! P^5(H)\!\vee\!  P^4(T_{\neq 2})
  \!\vee\! \big(\bigvee_{i=1}^{d-c_1}S^3\big)\!\vee\!  \big(\bigvee_{i=1}^{d}S^4\big) \!\vee\!  \big(\bigvee_{i=1}^{l-c_1-c_2}S^5\big)\\\!\vee\! \big(\bigvee_{i=1}^{c_1}C^5_\eta\big)\!\vee\! \big(\bigvee_{j=c_2+1}^{t_2}P^4(2^{r_j})\big)\!\vee\! \big(\bigvee_{j=1}^{c_2}C^5_{r_j}\big),
 \end{multline*}
 where $0\leq c_1 \leq \min\{l,d\}$ and $0\leq c_2\leq \min\{l-c_1,t_2\}$; $c_1=c_2=0$ if and only if $\Sq^2\big(H^2(M;\z{})\big)=0$. 

 %In particular, if $l=1$, then $(c_1,c_2)\in \{(0,0),(1,0),(0,1)\}$.
\begin{proof}
  By (\ref{Cofs}) and Lemma \ref{lem:W4}, $W_5$ is the homotopy cofibre of a map 
  \[\bigvee_{i=1}^lS^4\xra{h}W_4\simeq \big(\bigvee_{i=1}^d(S^3\vee S^4)\big)\vee P^3(H)\vee P^5(H)\vee P^4(T).\]
Similar arguments to that in the proof of Lemma \ref{lem:W4} show that there is a homotopy equivalence 
\begin{equation}\label{eq:W5}
  W_5\simeq \big(\bigvee_{i=1}^d S^4\big)\vee P^3(H)\vee P^5(H)\vee P^4(T_{\neq 2})\vee C_{h'},
\end{equation}
where $h'\colon \bigvee_{i=1}^lS^4\to \big(\bigvee_{i=1}^dS^3\big)\vee \big(\bigvee_{i=1}^{t_2}P^4(2^{r_i})\big)$. 

Since $\pi_4(P^4(2^r))\cong \z{}\langle i_3\eta\rangle$, we may represent the map $h'$ by a $(d+t_2)\times l$-matrix $M_{h'}$ with entries $0$, $\eta$ or $i_3\eta$. There hold homotopy equivalences 
\begin{align*}
  \begin{bmatrix}
    \id_3&0\\
   i_3&\id_P
  \end{bmatrix}\matwo{\eta}{i_3\eta}&\simeq \matwo{\eta}{0}\colon S^4\to S^3\vee P^4(2^r),\\
  \begin{bmatrix}
    \id_P&0 \\
   B(\chi^r_s) &\id_P
  \end{bmatrix}\matwo{i_3\eta}{i_3\eta}&\simeq \matwo{i_3\eta}{0}\colon S^4\to P^4(2^r)\vee P^4(2^s) \text{ for $r\geq s$}.
\end{align*}
Then by elementary matrix operations we have an equivalence 
\[M_{h'}\sim \begin{bmatrix}
  D_{c_1}&O\\
  O&O\\
  O&\mat{E_{c_2}}{O}{O}{O}
\end{bmatrix},\]
where $O$ denote suitable zero matrices, $D_{c_1}$ is the diagonal matrix of rank $c_1$ whose diagonal entries are $\eta$,  $E_{c_2}$ is a $c_2\times c_2$-matrix which has exactly one entry $i_3\eta$ in each row and column. It follows that there is a homotopy equivalence 
\[C_{h'}\simeq \big(\bigvee_{i=1}^{l-c_1-c_2}S^5\big)\vee  \big(\bigvee_{i=1}^{d-c_1}S^3\big)\vee \big(\bigvee_{j=c_2+1}^{t_2}P^4(2^{r_j})\big)\vee \big(\bigvee_{i=1}^{c_1}C^5_\eta\big)\vee \big(\bigvee_{j=1}^{c_2}C^5_{r_j}\big).\]
The proof of the Lemma then follows by (\ref{eq:W5}) and Lemma \ref{lem:Sq2-Chang}.
\end{proof}
\end{proposition}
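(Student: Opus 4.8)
The plan is to realize $W_5$ as a homotopy cofibre, split off the wedge summands carrying no relevant homotopy, and then classify the remaining attaching map by a matrix normal form. By the last cofibration of \eqref{Cofs} together with Lemma \ref{lem:W4}, $W_5$ is the homotopy cofibre of a homologically trivial map
\[ h\colon\bigvee_{i=1}^l S^4\longrightarrow W_4\simeq\big(\bigvee_{i=1}^d(S^3\vee S^4)\big)\vee P^3(H)\vee P^5(H)\vee P^4(T). \]
Decompose $h$ into its Hilton--Milnor components. A component into an $S^4$ lies in $\pi_4(S^4)\cong\Z$ and is homologically trivial (a summand projection of a homologically trivial map is again homologically trivial), hence null-homotopic; a component into a $P^5(q_j^{s_j})$ lies in $\pi_4(P^5(q_j^{s_j}))\cong\Z/q_j^{s_j}$, which is detected by $H_4$, hence null-homotopic; and a component into a $P^3(q_j^{s_j})$ or a $P^4(q_j^{s_j})$ with $q_j$ odd is null-homotopic since these $\pi_4$-groups vanish by Lemma \ref{lem:Moore} (\ref{Moore-S3P3}). (Unlike in the proof of Lemma \ref{lem:W4}, no Hopf-invariant computation is needed here because the domain is a sphere.) So Lemma \ref{lem:HL} applies and gives \eqref{eq:W5} with
\[ h'\colon\bigvee_{i=1}^l S^4\longrightarrow\big(\bigvee_{i=1}^d S^3\big)\vee\big(\bigvee_{j=1}^{t_2}P^4(2^{r_j})\big). \]

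The next step is to classify $h'$ up to self-homotopy-equivalences of its target. Since $\pi_4(S^3)\cong\z{}\langle\eta\rangle$ and $\pi_4(P^4(2^{r_j}))\cong\z{}\langle i_3\eta\rangle$, I would record $h'$ by a $(d+t_2)\times l$ matrix $M_{h'}$ over $\z{}$, the $S^3$-rows carrying multiples of $\eta$ and the $P^4(2^{r_j})$-rows multiples of $i_3\eta$. The elementary operations realized by self-equivalences of the target are: permutations of the $S^4$-columns, of the $S^3$-rows, and of equal-order $P^4$-rows; mod-$2$ addition of one $S^3$-row to another; addition of an $S^3$-row to a $P^4$-row through the bottom-cell inclusion $i_3$ (which turns $\eta$ into $i_3\eta$); and addition of a $P^4(2^r)$-row to a $P^4(2^s)$-row with $r\ge s$ through $B(\chi^r_s)$, which by \eqref{eq:chi} fixes the generator ($B(\chi^r_s)i_3\eta=i_3\eta$); that these together with $2\eta=0$ and $2\,i_3\eta=0$ account for the relevant mapping groups follows from Lemma \ref{lem:Moore1}(3). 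Running Gaussian elimination in the block-triangular order forced by these ``one-way'' row operations --- first clearing columns against the $S^3$-rows, then the remaining columns against the $P^4$-rows in order of decreasing $2$-adic exponent --- brings $M_{h'}$ to a normal form whose only nonzero entries form a rank-$c_1$ diagonal block $D_{c_1}$ of $\eta$'s, pairing $c_1$ of the $S^4$'s with $c_1$ of the $S^3$'s, together with a block $E_{c_2}$ having exactly one $i_3\eta$ in each of its $c_2$ rows and columns and pairing $c_2$ of the remaining $S^4$'s with $c_2$ of the $P^4(2^{r_j})$'s. This forces $0\le c_1\le\min\{l,d\}$ and $0\le c_2\le\min\{l-c_1,t_2\}$. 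Reading the cofibres off the normal form --- $\eta\colon S^4\to S^3$ has cofibre $C^5_\eta$, $i_3\eta\colon S^4\to P^4(2^{r_j})$ has cofibre $C^5_{r_j}$, an unused $S^4$ suspends to $S^5$, and unused $S^3$'s and $P^4(2^{r_j})$'s survive --- and substituting into \eqref{eq:W5} yields the asserted decomposition of $W_5$.

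It remains to prove the $\Sq^2$-characterization of the vanishing of $c_1$ and $c_2$. Plainly $c_1=c_2=0$ iff $M_{h'}=0$ iff $h'$ is null-homotopic iff the decomposition of $W_5$ has no Chang-complex summand. On a wedge of spheres and mod-$p^r$ Moore spaces, $\Sq^2\colon H^3(-;\z{})\to H^5(-;\z{})$ vanishes, whereas by Lemma \ref{lem:Sq2-Chang} each $C^5_\eta$ and each $C^5_{r_j}$ contributes an independent nonzero $\Sq^2$ on $H^3$ (cohomology operations act diagonally across a wedge); hence the rank of $\Sq^2\colon H^3(W_5;\z{})\to H^5(W_5;\z{})$ equals $c_1+c_2$. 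Since the canonical map $W_5\to\Sigma W$ is a $\z{}$-cohomology isomorphism in degrees $\le 5$, since $\Sigma M\simeq\big(\bigvee_{i=1}^l S^2\big)\vee\Sigma W$ by Lemma \ref{lem:ST} with the $S^2$-summands invisible in degrees $3$ and $5$, and since $\Sq^2$ commutes with suspension, this rank equals the rank of $\Sq^2\colon H^2(M;\z{})\to H^4(M;\z{})$. Therefore $c_1=c_2=0$ if and only if $\Sq^2$ acts trivially on $H^2(M;\z{})$.

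The step I expect to be the main obstacle is the normal-form argument: one must pin down exactly which row and column operations are induced by self-equivalences of $\big(\bigvee S^3\big)\vee\big(\bigvee P^4(2^{r_j})\big)$ --- in particular that row additions between Moore spaces of different order, and from $S^3$-summands into Moore spaces, run in only one direction --- so that the elimination must be carried out in the right order, and then verify that the resulting counts obey precisely the stated inequalities. By comparison, the peeling-off in the first step is routine once the $\pi_4$-computations are in place, and the $\Sq^2$-bookkeeping in the last step is purely formal.
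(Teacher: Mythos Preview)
Your proposal is correct and follows essentially the same route as the paper's proof: peel off the inert summands via Lemma \ref{lem:HL}, reduce the remaining attaching map $h'$ to a $(d+t_2)\times l$ matrix over $\z{}$ and bring it to block-diagonal normal form by elementary operations, then read off the cofibre and identify the $\Sq^2$-rank via Lemma \ref{lem:Sq2-Chang}. One small addition for the normal-form step you flagged as the main obstacle: you also need column \emph{additions} on $\bigvee_{i=1}^l S^4$ (available from the co-$H$ structure of the suspension), not just permutations, in order to clear the non-pivot columns and reach the form $\smatwo{D_{c_1}\ O}{O\ E_{c_2}}$; with these in hand the elimination goes through exactly as you describe.
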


\section{Proof of Theorem \ref{thm:main} and \ref{thm:dbsusp}}\label{sec:Proofs}
Let $M$ be the given $5$-manifold described in Theorem \ref{thm:main}.
By (\ref{Cofs}) there is a homotopy cofibration 
$S^5\xra{\phi}W_5\to \Sigma W$ with $W_5$ (and integers $c_1,c_2$) given by Proposition \ref{prop:W5}. Since $\phi$ is homologically trivial, so are the compositions 
\begin{align*}
  \phi_\eta&\colon S^5\xra{\phi}W_5\twoheadrightarrow \bigvee_{i=1}^{c_1} C^5_\eta\twoheadrightarrow C^5_\eta,\\
  \phi_{C_j}&\colon   S^5\xra{\phi}W_5\twoheadrightarrow \bigvee_{j=1}^{c_2} C^5_{r_j}\twoheadrightarrow C^5_{r_j},\\
  \phi_{H,j}&\colon S^5\xra{\phi}W_5\twoheadrightarrow P^3(H)\twoheadrightarrow P^3(q_j^{s_j}).
\end{align*} 
By Lemma \ref{lem:Changcpx},  $\phi_\eta$ is null-homotopic and $\phi_{C_j}=w_j\cdot i_P\tilde{\eta}_{r_j}$ for some $w_j\in\z{}$. 
By Lemma \ref{lem:S5P3}, $\phi_{H,j}$ is null-homotopic for primes $q_j\geq 5$ and $\Sigma \phi_{H,j}$ are null-homotopic for all odd primes $q_j$. Write $H=H_3\oplus H_{\geq 5}$ with $H_3$ the $3$-primary component of $H$.
It follows by Lemma \ref{lem:Moore} (\ref{Moore-P3P3}) and \ref{lem:HL} that 
there are homotopy equivalences  
\begin{align}
 \Sigma W&\!\simeq\! P^3(H_{\geq 5})\!\vee\! P^5(H)\!\vee\! P^4(T_{\neq 2})\!\vee\!\big(\bigvee_{i=1}^{l-c_1-c_2}S^5\big)\!\vee\!\big(\bigvee_{i=1}^{c_1} C^5_\eta\big)\!\vee\! C_{\bar{\phi}},\label{eq:SW-C_phi}\\
   \Sigma^2 W&\!\simeq\! P^4(H)\!\vee\!P^6(H)\!\vee\! P^5(T_{\neq 2})\!\vee\!\big(\bigvee_{i=1}^{l-c_1-c_2}S^6\big)\!\vee\!\big(\bigvee_{i=1}^{c_1} C^6_\eta\big)\!\vee\! C_{\Sigma\bar{\phi}},\label{eq:S2W-C_phi}
\end{align}
for some homologically trivial map 
\[\bar{\phi}\colon S^5\to P^3(H_3)\vee\big(\bigvee_{i=1}^{d-c_1}S^3\big)\vee \big(\bigvee_{i=1}^{d}S^4\big)\vee \big(\bigvee_{j=c_2+1}^{t_2}P^4(2^{r_j})\big)\vee \big(\bigvee_{j=1}^{c_2}C^5_{r_j}\big).\]
From now on we assume that $H_3=0$ to study the homotopy type of $\Sigma W$ or the homotopy cofibre $C_{\bar{\phi}}$. 
By Lemma \ref{lem:Moore1} and \ref{lem:Changcpx} we may put 
\begin{equation}\label{map:phi}
\small \bar{\phi}=\sum_{i=1}^{d-c_1}x_i\cdot \eta^2+\sum_{i=1}^{d}y_i\cdot \eta +\sum_{j=c_2+1}^{t_2}(z_j\cdot \tilde{\eta}_{r_j}+\epsilon_j\cdot i_3\eta^2)+\sum_{j=1}^{c_2}w_j\cdot i_P\tilde{\eta}_{r_j}+\theta,
\end{equation}
where all coefficients belong to $\z{}$ and $\theta$ is a linear combination of Whitehead products. By the Hilton-Milnor theorem the domain $\Wh$ of $\theta$ is given by 
\begin{align*}
  \Wh=&\bigoplus_{1\leq i,j\leq d-c_1}\pi_5(\Sigma S^2_i\wedge S^2_j)\oplus \bigoplus_{\substack{1\leq i\leq d-c_1\\c_2+1\leq j\leq t_2}}\pi_5(\Sigma S^2_i\wedge P^3(2^{r_j}))\\
  &\oplus \bigoplus_{\substack{1\leq i\leq d-c_1\\1\leq j\leq c_2}}\pi_5(\Sigma S^2_i\wedge C^4_{r_j})\oplus \bigoplus_{\substack{c_2+1\leq i,j\leq t_2}}\pi_5(\Sigma P^3(2^{r_i})\wedge P^3(2^{r_j}))\\
  &\oplus \bigoplus_{\substack{c_2+1\leq i\leq t_2\\1\leq j\leq c_2}}\pi_5(\Sigma P^3(2^{r_i})\wedge C^4_{r_j})\oplus \bigoplus_{\substack{1\leq i,j\leq c_2}}\pi_5(\Sigma C^4_{r_i}\wedge C^4_{r_j}).  
\end{align*}
Note that all the spaces $\Sigma X_i\wedge X_j$ are $4$-connected and hence there are Hurewicz isomorphisms $\pi_5(\Sigma X_i\wedge X_j)\cong H_5(\Sigma X_i\wedge X_j)$. For different $X_i$ and $X_j$, we use the ambiguous notations 
\[\iota_1\colon \Sigma X_i\to \Sigma X_i\vee \Sigma X_j, \quad \iota_2\colon \Sigma X_j\to \Sigma X_i\vee \Sigma X_j\] 
to denote the natural inclusions.  Then we can write 
\begin{equation}\label{eq:theta}
  \theta=a_{ij}+b_{ij}+c_{ij}+e_{ij}+f_{ij},
\end{equation}
where \begin{align*}
  a_{ij}&\colon S^5\xra{a_{ij}'}\Sigma S^2_i\wedge S^2_j\xra{[\iota_1,\iota_2]}\Sigma S^2_i\vee \Sigma S^2_j,\\ 
  b_{ij}&\colon S^5\xra{b_{ij}'}\Sigma S^2_i\wedge P^3(2^{r_j})\xra{[\iota_1,\iota_2]}\Sigma S^2_i\vee \Sigma P^3(2^{r_j}),\\
  c_{ij}&\colon S^5\xra{c_{ij}'}\Sigma S^2_i\wedge C^4_{r_j}\xra{[\iota_1,\iota_2]}\Sigma S^2_i\vee \Sigma C^4_{r_j},\\
  d_{ij}&\colon S^5\xra{d_{ij}'}\Sigma P^3(2^{r_i})\wedge P^3(2^{r_j})\xra{[\iota_1,\iota_2]}\Sigma P^3(2^{r_i})\vee \Sigma P^3(2^{r_j}),\\
  e_{ij}&\colon S^5\xra{e_{ij}'}\Sigma P^3(2^{r_i})\wedge C^4_{r_j}\xra{[\iota_1,\iota_2]}\Sigma P^3(2^{r_i})\vee \Sigma C^4_{r_j},\\
  f_{ij}&\colon S^5\xra{f_{ij}'}\Sigma C^4_{r_j}\wedge C^4_{r_i}\xra{[\iota_1,\iota_2]}\Sigma C^4_{r_i}\vee \Sigma C^4_{r_j}.
\end{align*} 

Since the homotopy cofibre of $\phi$ is $\Sigma W$, similar arguments to the proof of \cite[Lemma 4.2]{CS22} show the following lemma.
\begin{lemma}\label{lem:cupprods=0}
  Let $C_u$ be the homotopy cofibre of a map $u$ with $u$ given by (1) $u=a_{ij}$, (2) $u=b_{ij}$, (3) $u=c_{ij}$, (4) $u=d_{ij}$, (5) $u=e_{ij}$, (6) $u=f_{ij}$. Then all cup products in $H^\ast(C_u;R)$ are trivial for any principal ideal domain $R$.
\end{lemma}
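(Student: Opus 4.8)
The plan is to reduce the assertion to the vanishing of a single ``Hopf invariant'' pairing on $C_u$, and then to read off that vanishing from the fact that the homotopy cofibre of $\phi$ is the \emph{suspension} $\Sigma W$.

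In each of the six cases $u$ is a homologically trivial map $S^5\to \Sigma X_i\vee \Sigma X_j$ with $X_i,X_j\in\{S^2,P^3(2^r),C^4_s\}$, and $\Sigma X_i,\Sigma X_j$ are distinct wedge summands of the domain of $\bar{\phi}$ (see \eqref{eq:SW-C_phi} and what follows it); the Hilton--Milnor decomposition involves only Whitehead products between distinct summands. Hence $\tilde{H}^\ast(C_u;R)$ agrees with $\tilde{H}^\ast(\Sigma X_i\vee \Sigma X_j;R)$ in degrees $\leq 5$ and is $\cong R$, generated by the top cell, in degree $6$, while all reduced cohomology of $\Sigma X_i\vee \Sigma X_j$ sits in degrees $\geq 3$. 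Since $\Sigma X_i\vee \Sigma X_j$ is a suspension, the only products of positive-degree classes of $C_u$ not forced to be zero are products of two $3$-dimensional classes, which land in $H^6(C_u;R)\cong R$. Writing $H^3(C_u;R)=H^3(\Sigma X_i;R)\oplus H^3(\Sigma X_j;R)$ and lifting classes along the inclusion $\Sigma X_i\vee \Sigma X_j\hookrightarrow C_u$, I reduce the lemma to showing that the pairing $(x,y)\mapsto\langle u;x,y\rangle\in R$ given by cup product is identically zero. As in \cite[Lemma~4.2]{CS22}, this pairing is additive in $u\in\pi_5(\Sigma X_i\vee \Sigma X_j)$ and natural under maps of mapping cones induced from maps of the attached spaces.

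For two $3$-classes supported on the \emph{same} summand I argue directly: being a (generalized) Whitehead product with its two factors on the distinct summands $\Sigma X_i$ and $\Sigma X_j$, the map $u$ has null projection onto either summand, so such a product in $C_u$ is pulled back, via a map $C_u\to \Sigma X_i\vee S^6$ (the cofibre of the corresponding null map $S^5\to\Sigma X_i$), from a product in $\Sigma X_i\vee S^6$, and this vanishes because $\Sigma X_i$ is a suspension. For a ``cross'' product ($x$ supported on $\Sigma X_i$, $y$ on $\Sigma X_j$) I use the global picture: by \eqref{map:phi}--\eqref{eq:theta}, $u$ is exactly the Whitehead-product component of $\bar{\phi}$ carried by the pair $\Sigma X_i,\Sigma X_j$, whereas every other component of $\bar{\phi}$ is either one of the single-summand terms $x_i\cdot\eta^2$, $y_i\cdot\eta$, $z_j\cdot\tilde{\eta}_{r_j}$, $\epsilon_j\cdot i_3\eta^2$, $w_j\cdot i_P\tilde{\eta}_{r_j}$, or a Whitehead product between a different pair of summands; in either case its attached $6$-cell lies in a wedge summand disjoint from $x$ or from $y$, so that component pairs to $0$ with $(x,y)$. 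By additivity $\langle u;x,y\rangle=\langle \bar{\phi};x,y\rangle$ for such $x,y$; but $\langle\bar{\phi};x,y\rangle$ is computed inside $C_{\bar{\phi}}$, which by \eqref{eq:SW-C_phi} is a wedge summand of the suspension $\Sigma W$ and hence carries no nontrivial reduced cup products, so $\langle\bar{\phi};x,y\rangle=0$. Thus all cup products of positive-degree classes in $H^\ast(C_u;R)$ are trivial for every principal ideal domain $R$, uniformly in the six cases.

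The step I expect to require the most care is the Hilton--Milnor bookkeeping: precisely identifying $u$ as the component of $\bar{\phi}$ attached to the pair $\Sigma X_i,\Sigma X_j$, and confirming that no other component of $\bar{\phi}$ contributes to the pairing $\langle\,\cdot\,;x,y\rangle$ for the relevant $x,y$ --- this is exactly where the hypothesis that $\phi$ has cofibre $\Sigma W$ enters. Establishing (or quoting from \cite[Lemma~4.2]{CS22}) the additivity and naturality of the Hopf-invariant pairing, and checking that the single-summand terms of \eqref{map:phi} contribute nothing to any cup product in their own mapping cones, are routine.
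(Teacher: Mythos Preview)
Your argument is correct and is essentially a fleshed-out version of what the paper does: the paper's proof is the single sentence ``since the homotopy cofibre of $\phi$ is $\Sigma W$, similar arguments to the proof of \cite[Lemma~4.2]{CS22} show the lemma,'' and your additivity/naturality argument for the Hopf-invariant pairing, together with the observation that $C_{\bar\phi}$ is a wedge summand of the suspension $\Sigma W$ by \eqref{eq:SW-C_phi}, is precisely that argument made explicit. One small remark: your closing comment about ``checking that the single-summand terms contribute nothing to any cup product in their own mapping cones'' is unnecessary---what you actually need, and what you already argued correctly in the preceding paragraph, is only that such a term has cofibre splitting as $(\text{one summand}\cup e^6)\vee(\text{rest})$, so the \emph{cross} pairing with the class on the other summand vanishes.
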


By Lemma \ref{lem:cupprods=0} and \ref{lem:cupprods=0=nullhtp} we then get 
\begin{corollary}\label{cor:theta=0}
  The Whitehead product component $\theta$ (\ref{eq:theta}) of $\bar{\phi}$ is trivial.
\end{corollary}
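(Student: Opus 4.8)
The plan is to apply Lemmas \ref{lem:cupprods=0} and \ref{lem:cupprods=0=nullhtp} termwise to the decomposition (\ref{eq:theta}). By the Hilton--Milnor theorem together with the connectivity observations made after (\ref{map:phi}) --- every basic product of weight $\geq 3$, and every weight-$2$ basic product not listed in the domain $\Wh$ of $\theta$, has underlying smash complex of connectivity $\geq 6$ and hence carries no essential map from $S^5$ --- the Whitehead-product part $\theta$ of $\bar\phi$ is exactly the sum, over the relevant index ranges, of the components $a_{ij},b_{ij},c_{ij},d_{ij},e_{ij},f_{ij}$. Each such component is the composite of the Whitehead product map $[\iota_1,\iota_2]$ with a map from $S^5$ into the relevant space $\Sigma X_1\wedge X_2$, where $X_1,X_2\in\{S^2,P^3(2^r),C^4_s\}$; so it suffices to prove that every one of these maps out of $S^5$ is null-homotopic.

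Fix one component $u$, with smash factor $\Sigma X_1\wedge X_2$ and coefficient group $G=H_2(X_1)\otimes H_2(X_2)$ (equal to $\Z$, to $\z{r}$, or to $\z{\min\{r,s\}}$ according to the type of $u$). By Lemma \ref{lem:cupprods=0}, all cup products in $H^\ast(C_u;R)$ vanish for every principal ideal domain $R$; since that lemma is proved by a mod-$2^m$ Hopf-invariant computation of the kind used in Lemmas \ref{lem:cupprod}--\ref{lem:cupprod2}, the same argument gives the vanishing with coefficients in any $\z{m}$, so in every case all cup products in $H^\ast(C_u;G)$ are trivial. Since $\Sigma X_1\wedge X_2$ is $4$-connected with $H_5(\Sigma X_1\wedge X_2)\cong G$ for each admissible pair, Lemma \ref{lem:cupprods=0=nullhtp} applies and shows that the map out of $S^5$, and hence $u$ itself, is null-homotopic. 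Summing over the finitely many indices yields $\theta\simeq\ast$, as claimed.

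The real content is carried by the two cited lemmas, and the only point needing care is that the coefficient group $G$ required by Lemma \ref{lem:cupprods=0=nullhtp} is, for the mixed Moore and Chang factors, a finite cyclic $2$-group rather than a PID, so the cup-product vanishing of Lemma \ref{lem:cupprods=0} must be invoked with those coefficients. This is harmless: in each $C_u$ the only potentially nonzero cup product is the pairing $H^3(C_u;G)\otimes H^3(C_u;G)\to H^6(C_u;G)\cong G$ (as $C_u$ is $2$-connected of dimension $6$), and it is governed by precisely the Hopf-invariant datum that the proof of Lemma \ref{lem:cupprods=0} computes. I expect this coefficient bookkeeping, rather than any genuine topological obstacle, to be the main thing to get right.
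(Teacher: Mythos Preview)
Your approach coincides with the paper's: the corollary follows by applying Lemma~\ref{lem:cupprods=0=nullhtp} to each component of $\theta$, using Lemma~\ref{lem:cupprods=0} to verify the cup-product hypothesis; the paper's own proof is a one-line citation of precisely these two lemmas.

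You are right to flag the coefficient mismatch --- Lemma~\ref{lem:cupprods=0} is stated for principal ideal domains $R$, whereas Lemma~\ref{lem:cupprods=0=nullhtp} requires $G=H_2(X_1)\otimes H_2(X_2)\cong\z{m}$, which is not a domain for $m\geq 2$ --- and the paper does gloss over this. However, your fix misreads how Lemma~\ref{lem:cupprods=0} is established. It is \emph{not} a Hopf-invariant computation in the style of Lemmas~\ref{lem:cupprod}--\ref{lem:cupprod2}; the paper explicitly says it follows \cite[Lemma~4.2]{CS22}, and that argument is pure naturality. Namely, $C_{\bar\phi}$ is a wedge summand of the suspension $\Sigma W$ by~(\ref{eq:SW-C_phi}), so all cup products in $H^\ast(C_{\bar\phi};G)$ vanish for \emph{any} coefficient module $G$; projecting the codomain of $\bar\phi$ onto the relevant pair $\Sigma X_1\vee\Sigma X_2$ of wedge summands then transports this vanishing to $H^\ast(C_u;G)$ via the induced map on cofibres (which is injective on the cohomology classes in question). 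The PID hypothesis in the statement of Lemma~\ref{lem:cupprods=0} is therefore inessential, and the gap you identified closes for this reason rather than the one you proposed.
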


For each $n\geq 2$, let $\Theta_n$ be secondary cohomology operation based on the null-homotopy of the composition
\[K_n\xra{\theta_n=\smatwo{\Sq^2\Sq^1}{\Sq^2}} K_{n+3}\times K_{n+2}\xra{\varphi_n=[\Sq^1,\Sq^2]}K_{n+4},\]
where $K_m=K(\z{},m)$ denotes the Eilenberg-MacLane space of type $(\z{},m)$. More concretely,  $\Theta_n\colon S_n(X)\to T_n(X)$ is a cohomology operation with
 \begin{align*}
  S_n(X)&=\ker(\theta_n)_\sharp=\ker(\Sq^2)\cap \ker(\Sq^2\Sq^1)\\
  T_n(X)&=\coker(\Omega\varphi_n)_\sharp=H^{n+3}(X;\z{})/\im(\Sq^1+\Sq^2). 
\end{align*} 
Note that $\Theta_n$ detects the maps $\eta^2\in\pi_{n+2}(S^n)$ and $i_n\eta^2\in\pi_{n+2}(P^{n+1}(2^r))$ (cf. 
\cite[Section 2.4]{lipc23}).  By the method outlined in \cite[page 32]{MM79}, the stable secondary operation $\Theta=\{\Theta_n\}_{n\geq 2}$ is \emph{spin trivial} (cf. \cite{Thomas67}), which means the following Lemma holds.
\begin{lemma}\label{lem:spin-trivial}
The secondary operation $\Theta\colon H^\ast(M;\z{})\to H^{\ast+3}(M;\z{})$ is trivial for any orientable closed smooth spin manifold $M$.
\end{lemma}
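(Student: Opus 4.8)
The plan is to recognise $\Theta$ as one of the stable secondary cohomology operations that Thomas \cite{Thomas67} and Madsen--Milgram \cite{MM79} singled out as \emph{spin trivial}, so that the lemma is in essence a direct application of their work; the only thing one has to do is to check that our $\Theta$ genuinely falls into their framework, and then unpack what spin triviality gives on a concrete closed manifold.

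First I would verify that the defining relation is genuine and ``small''. Using the Adem relations $\Sq^3=\Sq^1\Sq^2$ and $\Sq^2\Sq^2=\Sq^3\Sq^1$ one computes
\[
\varphi_n\circ\theta_n=\Sq^1\!\cdot\!(\Sq^2\Sq^1)+\Sq^2\!\cdot\!\Sq^2=\Sq^1\Sq^2\Sq^1+\Sq^1\Sq^2\Sq^1=0,
\]
so $\Theta=\{\Theta_n\}_{n\geq 2}$ is a well-defined stable secondary operation of degree $3$ based on this relation. Moreover every primary operation appearing, namely $\Sq^1$, $\Sq^2$ and $\Sq^2\Sq^1$, lies in the subalgebra $\mathcal{A}(1)\subset\mathcal{A}$ generated by $\Sq^1$ and $\Sq^2$, and the displayed identity $\Sq^2\Sq^2=\Sq^1\Sq^2\Sq^1$ is one of the defining relations of $\mathcal{A}(1)$.

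Next I would recall what spin triviality buys. Following the method of \cite[p.\,32]{MM79} (and \cite{Thomas67}): for an arbitrary closed smooth manifold $M$ the Wu formula expresses the action of $\Sq^i$ on $H^\ast(M;\z{})$ through the Wu classes $v_i$, and combining this with the Cartan-type deviation formula for the secondary operation one obtains a universal formula for $\Theta$ on $H^\ast(M;\z{})$ whose only manifold-dependent ingredients are the Wu classes of $M$; for an operation built from a relation inside $\mathcal{A}(1)$ only $v_1$ and $v_2$ intervene. Equivalently, one may argue spectrum-theoretically: a spin manifold is $ko$-orientable, the $\mathcal{A}$-module $H^\ast(ko;\z{})\cong\mathcal{A}/\!\!/\mathcal{A}(1)$ vanishes in degree $3$ and has its bottom class annihilated by $\Sq^1$ and $\Sq^2$, and Atiyah (Spanier--Whitehead) duality carries $\Theta$ on $H^\ast(M;\z{})$ to the corresponding operation on the Thom spectrum of the stable normal bundle of $M$, which maps to $MSpin$ and thence to $ko$. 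Either way, since $M$ spin gives $v_1=w_1(M)=0$ and $v_2=w_1(M)^2+w_2(M)=0$, every term of the universal formula vanishes, so $\Theta_n\colon S_n(M)\to T_n(M)$ is the zero homomorphism for all $n$ — note that the indeterminacy $\im(\Sq^1)+\im(\Sq^2)$ is already absorbed into the target $T_n(M)$, so this is precisely the assertion of the lemma.

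The only genuinely delicate point is bookkeeping the indeterminacy of a secondary operation while passing from the universal examples (Eilenberg--MacLane spectra, $BSO$, $MSpin$) to a concrete closed manifold and its fundamental class. That bookkeeping, however, is exactly the content of \cite[p.\,32]{MM79} and of \cite{Thomas67}, so once the computation above identifies our $\Theta$ as a secondary operation built from a relation lying in $\mathcal{A}(1)$, those results apply verbatim and no further work is required.
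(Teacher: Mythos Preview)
Your proposal is correct and follows precisely the same approach as the paper: the paper does not give an independent proof but simply records that $\Theta$ is spin trivial by the method of \cite[p.~32]{MM79} and \cite{Thomas67}, and you have supplied the verification (via the Adem relations and the observation that the defining relation lies in $\mathcal{A}(1)$) that these references indeed apply to the operation $\Theta$ in question. Your write-up is thus a fleshed-out version of the paper's one-line citation, with no deviation in strategy.
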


Now we are prepared to classify the homotopy types of $C_{\bar{\phi}}$. Note that for a closed orientable smooth $5$-manifold $M$, the second Stiefel-Whitney class equals the second Wu class $v_2$, which satisfies $\Sq^2(x)=v_2\smallsmile x$ for all $x\in H^3(M;\z{})$ \cite[page 132]{MS74}. It follows that the orientable smooth $5$-manifold $M$ is spin if and only if $\Sq^2$ acts trivially on $H^3(M;\z{})$, which is equivalent to $\Sq^2$ acting trivially on $H^4(\Sigma W;\z{})$ or $H^4(C_{\bar{\phi}};\z{})$, by Lemma \ref{lem:ST} and the homotopy decomposition (\ref{eq:SW-C_phi}). 

\begin{proposition}\label{prop:Sq2eq0}
If $M$ is a closed orientable smooth spin $5$-manifold, then there is a homotopy equivalence 
\[C_{\bar{\phi}}\simeq \big(\bigvee_{i=1}^{d-c_1}S^3\big)\!\vee\! \big(\bigvee_{i=1}^{d}S^4\big)\!\vee\! \big(\bigvee_{j=c_2+1}^{t_2}P^4(2^{r_j})\big)\!\vee\!\big(\bigvee_{j=1}^{c_2}C^5_{r_j}\big)\!\vee\! S^6.\]

 \begin{proof}
The smooth spin condition on $M$, together with Lemma \ref{lem:spin-trivial}, implies that $x_i=\epsilon_j=0$ for all $i,j$ in (\ref{map:phi}).
By the comments above Proposition \ref{prop:Sq2eq0}, $M$ is spin implies that the Steenrod square $\Sq^2$ acts trivially on $H^4(C_{\bar{\phi}};\z{})$.   
Then Lemma \ref{lem:Sq2-Chang} and \ref{lem:Sq2} imply $y_i=z_j=w_j=0$ for all $i,j$. Thus the map $\bar{\phi}$ in (\ref{map:phi}) is null-homotopic and therefore we get the homotopy equivalence in the Proposition.
 \end{proof}
\end{proposition}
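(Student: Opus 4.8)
The plan is to exploit the three tools assembled just before the statement: (i) Corollary~\ref{cor:theta=0}, which kills the Whitehead-product component $\theta$, so that $\bar{\phi}$ is given entirely by the linear formula \eqref{map:phi}; (ii) Lemma~\ref{lem:spin-trivial}, the spin-triviality of the secondary operation $\Theta$; and (iii) the characterization, recalled in the paragraph above the Proposition, that $M$ smooth spin is equivalent to $\Sq^2$ acting trivially on $H^4(\Sigma W;\Z/2)\cong H^4(C_{\bar{\phi}};\Z/2)$. The goal is to show every coefficient in \eqref{map:phi} vanishes, so that $\bar{\phi}\simeq 0$ and $C_{\bar{\phi}}$ splits as the wedge of the suspensions of the summands of its domain together with $S^6$ (the cofibre of the trivial map $S^5\to S^3$ is $S^3\vee S^6$, etc.).

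First I would handle the $\eta^2$-type coefficients $x_i$ and $\epsilon_j$. These are exactly the coefficients detected by the secondary operation $\Theta_n$, as noted in the excerpt just before Lemma~\ref{lem:spin-trivial}: $\Theta$ detects $\eta^2\in\pi_{n+2}(S^n)$ and $i_n\eta^2\in\pi_{n+2}(P^{n+1}(2^r))$. Concretely, if some $x_i$ (resp.\ $\epsilon_j$) were nonzero, then $\Theta$ would act nontrivially on the corresponding degree-$3$ (resp.\ degree-$3$, on the $S^3$-bottom of $P^4(2^{r_j})$) class of $C_{\bar{\phi}}$, hence of $\Sigma W$, hence of $\Sigma M$, hence of $M$ by desuspension; since $\Theta$ is spin trivial by Lemma~\ref{lem:spin-trivial}, this forces $x_i=\epsilon_j=0$ for all $i,j$. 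After this reduction $\bar{\phi}$ only involves the $\eta$-type and $\tilde{\eta}_r$-type terms $y_i\cdot\eta$, $z_j\cdot\tilde{\eta}_{r_j}$ and $w_j\cdot i_P\tilde{\eta}_{r_j}$.

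Next I would kill the remaining coefficients using the $\Sq^2$-detection argument. The point is that each of $\eta$, $\tilde{\eta}_r$ and $i_P\tilde{\eta}_r$ is detected by $\Sq^2$: attaching a cell along a nonzero such map produces a complex on which $\Sq^2$ is nonzero between the appropriate mod $2$ cohomology groups. More precisely, $C^5_\eta=S^3\cup_\eta e^5$, $A^6(\tilde{\eta}_r)=P^4(2^r)\cup_{\tilde{\eta}_r}e^6$ and $A^6(i_P\tilde{\eta}_r)=C^5_r\cup_{i_P\tilde{\eta}_r}e^6$ all carry a nontrivial $\Sq^2\colon H^3(-;\Z/2)\to H^5(-;\Z/2)$ by Lemma~\ref{lem:Sq2-Chang} and Lemma~\ref{lem:Sq2}. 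If some $y_i$, $z_j$ or $w_j$ were nonzero, then projecting $C_{\bar{\phi}}$ onto the corresponding two-cell or three-cell sub-quotient complex would exhibit a nonzero $\Sq^2$ on $H^4(C_{\bar{\phi}};\Z/2)$, contradicting that $M$ is spin. Hence $y_i=z_j=w_j=0$ for all $i,j$, so $\bar{\phi}$ is null-homotopic and the asserted splitting of $C_{\bar{\phi}}$ follows from the elementary fact that the cofibre of a trivial map out of $S^5$ splits off $S^6$ from the suspension of the target.

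The main obstacle is the bookkeeping in the last step: one must be careful that projecting onto a single Moore-space or Chang-complex summand and collapsing everything else genuinely isolates the coefficient one wants, and that the mod $2$ cohomology class it acts on really does survive into $H^4(C_{\bar{\phi}};\Z/2)$ rather than being hit by a differential or a boundary; this is where the homologically-trivial hypothesis on $\bar{\phi}$ (inherited from $\phi$ being homologically trivial in \eqref{Cofs}) and the explicit form of $W_5$ in Proposition~\ref{prop:W5} are used, so the argument is really just naturality of $\Sq^2$ and of $\Theta$ applied to the obvious cofibre projections. No genuinely new input beyond Lemmas~\ref{lem:Sq2-Chang}, \ref{lem:Sq2}, \ref{lem:spin-trivial} and Corollary~\ref{cor:theta=0} is needed.
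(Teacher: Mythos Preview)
Your proposal is correct and follows essentially the same route as the paper's proof: first use spin-triviality of $\Theta$ (Lemma~\ref{lem:spin-trivial}) to eliminate the $\eta^2$-type coefficients $x_i,\epsilon_j$, then use triviality of $\Sq^2$ on $H^4(C_{\bar{\phi}};\Z/2)$ together with Lemmas~\ref{lem:Sq2-Chang} and~\ref{lem:Sq2} to eliminate $y_i,z_j,w_j$. One cosmetic slip: the relevant cofibres are $C^6_\eta$, $A^6(\tilde{\eta}_r)$, $A^6(i_P\tilde{\eta}_r)$ with $\Sq^2\colon H^4(-;\Z/2)\to H^6(-;\Z/2)$, not $H^3\to H^5$ as you wrote, but this does not affect the argument.
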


\begin{remark}\label{rmk:PDcpx}
  If $M$ is a general $5$-dimensional connected \PD complex such that $\Sq^2$ acts trivially on $H^3(M;\z{})$, then we have the following two additional possibilities for the homotopy types of $C_{\bar{\phi}}$ in terms of the secondary cohomology operation $\Theta$:
  \begin{enumerate}
    \item If for any $u\in H^3(M;\z{})$ with $\Theta(u)\neq 0$ and any $v\in \ker(\Theta)$, there holds $\beta_r(u+v)=0$ for all $r$, then there is a homotopy equivalence 
    \[ C_{\bar{\phi}}\simeq \big(\bigvee_{i=2}^{d-c_1}S^3\big)\!\vee\! \big(\bigvee_{i=1}^{d}S^4\big)\!\vee\!  \big(\bigvee_{j=c_2+1}^{t_2}P^4(2^{r_j})\big)\!\vee\! \big(\bigvee_{j=1}^{c_2}C^5_{r_j}\big)\!\vee\! (S^3\cup_{\eta^2}e^6). \]
    \item If there exist $u\in H^3(M;\z{})$ with $\Theta(u)\neq 0$ and $v\in \ker(\Theta)$ such that $\beta_r(u+v)\neq 0$, then there is a homotopy equivalence 
    \[  C_{\bar{\phi}}\simeq \big(\bigvee_{i=1}^{d-c_1}S^3\big)\!\vee\! \big(\bigvee_{i=1}^{d}S^4\big)\!\vee\!  \big(\bigvee_{j_0\neq j=c_2+1}^{t_2}P^4(2^{r_j})\big)\!\vee\! \big(\bigvee_{j=1}^{c_2}C^5_{r_j}\big)\!\vee\! A^6(2^{r_{j_0}}\eta^2), \]
    where $A^6(2^{r_{j_0}}\eta^2)=P^4(2^{r_{j_0}})\cup_{i_3\eta^2}e^6$, $j_0$ is the index such that $r_{j_0}$ is the maximum of $r_j$ satisfying $\beta_{r_j}(u+v)\neq 0$.
  \end{enumerate}

\end{remark}

\begin{proposition}\label{prop:Sq2neq0}
 Suppose that $\Sq^2$ acts non-trivially on $H^3(M;\z{})$, or equivalently $\Sq^2$ acts non-trivially on $H^4(C_{\bar{\phi}};\z{})$. 
\begin{enumerate}
  \item\label{Sq2neq0-1} If for any $u, v\in H^4(C_{\bar{\phi}};\z{})$ satisfying $\Sq^2(u)\neq 0$ and $\Sq^2(v)=0$, there holds $u+v\notin \im(\beta_r)$ for any $r\geq 1$,
  then there is a homotopy equivalence 
  \[C_{\bar{\phi}}\simeq \big(\bigvee_{i=1}^{d-c_1}S^3\big)\!\vee\! \big(\bigvee_{i=2}^{d}S^4\big)\!\vee\! \big(\bigvee_{j=c_2+1}^{t_2}P^4(2^{r_j})\big)\!\vee\!\big(\bigvee_{j=1}^{c_2}C^5_{r_j}\big)\!\vee\! C^6_\eta.\]
 \item\label{Sq2neq0-2} If there exist $u, v\in H^4(C_{\bar{\phi}};\z{})$ with $\Sq^2(u)\neq 0$ and $v\in \ker(\Sq^2)$ such that $u+v\in\im(\beta_r)$ for some $r$, then either there is a homotopy equivalence 
 \[ C_{\bar{\phi}}\simeq \big(\bigvee_{i=1}^{d-c_1}S^3\big)\!\vee\! \big(\bigvee_{i=1}^{d}S^4\big)\!\vee\! \big(\bigvee_{j_1\neq j=c_2+1}^{t_2}P^4(2^{r_j})\big)\!\vee\!\big(\bigvee_{j=1}^{c_2}C^5_{r_j}\big)\!\vee\! A^6(\tilde{\eta}_{r_{j_1}}),\]
or there is a homotopy equivalence 
\[C_{\bar{\phi}}\simeq \big(\bigvee_{i=1}^{d-c_1}S^3\big)\!\vee\! \big(\bigvee_{i=1}^{d}S^4\big)\!\vee\! \big(\bigvee_{j=c_2+1}^{t_2}P^4(2^{r_j})\big)\!\vee\!\big(\bigvee_{j_1\neq j=1}^{c_2}C^5_{r_j}\big)\!\vee\! A^6(i_P\tilde{\eta}_{r_{j_1}}),\]
where the last two complexes are defined by (\ref{eq:An+3}) and $r_{j_1}$ is the minimum of $r_j$ such that $u+v\in \im(\beta_{r_j})$. 
\end{enumerate}

\begin{proof}
  Recall the equation for $\bar{\phi}$ given by (\ref{map:phi}).
 Since $\Sq^2$ acts non-trivially on $H^4(C_{\bar{\phi}};\z{})$, at least one of $y_i,z_j,w_j$ equals $1$. 

(1) The conditions in (\ref{Sq2neq0-1}) implying that $z_j=w_j=0$ for all $j$ and hence $y_i=1$ for some $i$. Clearly we may assume that 
$y_1=1$ and $y_i=0$ for all $2\leq i\leq d$. By the equivalences 
\[\matwo{\eta}{\eta^2}\sim \matwo{\eta}{0}\colon S^5\to S^4\vee S^3,~~\matwo{\eta}{i_3\eta^2}\sim \matwo{\eta}{0}\colon S^5\to S^4\vee P^4(2^r),\]
we may further assume that $x_i=\epsilon_i=0$ for all $i$ in (\ref{map:phi}). Thus we have
\[\bar{\phi}=\eta\colon S^5\to S^4,\]
which proves the homotopy equivalence in (\ref{Sq2neq0-1}).

(2) The conditions in (\ref{Sq2neq0-2}) implies that $z_j=1$ or $w_j=1$ for some $j$. 
For maps $\tilde{\eta}_r,i_3\eta^2\colon S^5\to P^4(2^r)$ and $i_P\tilde{\eta}_s\colon S^5\to C^5_s$, the formulas (\ref{eq:chi}) and (\ref{eq:chi-eta}) indicate the following equivalences 
\begin{align*}
  &\matwo{\tilde{\eta}_r}{\eta^a}\sim \matwo{\tilde{\eta}_r}{0}~(a=1,2),\quad \matwo{i_P\tilde{\eta}_r}{\eta^a}\sim \matwo{i_P\tilde{\eta}_r}{0}~(a=1,2);\\
  &\matwo{\tilde{\eta}_r}{\tilde{\eta}_s}\sim \matwo{\tilde{\eta}_r}{0}~(r\leq s),\quad \matwo{i_P\tilde{\eta}_r}{i_P\tilde{\eta}_s}\sim \matwo{i_P\tilde{\eta}_r}{0} ~(r\leq s);\\
  &\matwo{\tilde{\eta}_r}{i_3\eta^2}\sim \matwo{\tilde{\eta}_r}{0}(i_3\eta^2\in \pi_5(P^4(2^s)), r\neq s),\quad \matwo{i_P\tilde{\eta}_r}{i_3\eta^2}\sim \matwo{i_P\tilde{\eta}_r}{0}.
\end{align*}
It follows that we may assume that $x_i=y_i=0$ for all $i$ regardless of whether $z_j=1$ or $w_j=1$. 
\begin{enumerate}[(i)]
  \item  If $z_j=1$ for some $j$, we assume that $z_j=1$ for exactly one $j$, say $z_{j_1}=1$; in this case, $\epsilon_j=0$ for all $j\neq j_1$. Note that $\id_P+i_3\eta q_4$ is a self-homotopy equivalence of $P^4(2^r)$ and 
  \[(\id_P+i_3\eta q_4)(\tilde{\eta}_r+i_3\eta^2)=\tilde{\eta}_r+i_3\eta^2+i_3\eta^2=\tilde{\eta}_r, \] 
  we may assume that $\epsilon_{j_1}=1$ and $\epsilon_j=0$ for $j\neq j_1$.
  \item If $w_j=1$ for some $j$, then $w_j=1$ for exactly one $j$, say $w_{j_2}=1$; in this case, $\epsilon_j=0$ for all $j$.
\end{enumerate} 

By (\ref{eq:C_r-P}) we have the equivalences for maps $S^5\to P^4(2^r)\vee C^5_s$: 
    \[\matwo{\tilde{\eta}_r}{i_P\tilde{\eta}_s}\sim  \matwo{\tilde{\eta}_r}{0} \text{ if }r\leq s; \quad \matwo{\tilde{\eta}_r}{i_P\tilde{\eta}_s}\sim \matwo{0}{i_P\tilde{\eta}_s}\text{ if }r>s.\] 
Thus we may assume that $\bar{\phi}=\tilde{\eta}_{r_{j_1}}$ if $r_{j_1}\leq r_{j_2}$; otherwise $\bar{\phi}=i_P\tilde{\eta}_{r_{j_2}}$, which prove the homotopy equivalences in (\ref{Sq2neq0-2}).
\end{proof}
\end{proposition}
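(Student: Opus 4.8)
The plan is to turn the statement into a normalization of the single attaching map $\bar{\phi}\colon S^5\to W_5$ of (\ref{map:phi}). By Corollary \ref{cor:theta=0} its Whitehead-product component $\theta$ already vanishes, so $\bar{\phi}$ is a $\z{}$-linear combination of the primitive pieces $x_i\eta^2$, $y_i\eta$, $z_j\tilde{\eta}_{r_j}+\epsilon_j i_3\eta^2$ and $w_ji_P\tilde{\eta}_{r_j}$; from here the whole argument is a sequence of elementary row operations on this vector, realized by self-homotopy-equivalences of the target wedge as in Section \ref{sec:prelim} and governed by the relations (\ref{eq:chi}), (\ref{eq:chi-eta}), (\ref{eq:C_r-P}). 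First I would determine which coefficients can be nonzero: by Lemmas \ref{lem:Sq2-Chang} and \ref{lem:Sq2} the only elementary summands whose cofibres carry a nontrivial $\Sq^2\colon H^4\to H^6$ are those attached by $\eta\colon S^5\to S^4$, $\tilde{\eta}_{r_j}\colon S^5\to P^4(2^{r_j})$ and $i_P\tilde{\eta}_{r_j}\colon S^5\to C^5_{r_j}$, so the standing assumption that $\Sq^2$ is nontrivial on $H^4(C_{\bar{\phi}};\z{})$ forces at least one of the $y_i,z_j,w_j$ to equal $1$. Then I would separate the two cases by a higher Bockstein: the $H^4$-generator contributed by a $P^4(2^r)$- or $C^5_r$-summand lies in $\im(\beta_r)$, whereas that contributed by an $S^4$-summand lies in no $\im(\beta_r)$; hence the hypothesis in (\ref{Sq2neq0-1}) forces $z_j=w_j=0$ for all $j$ and some $y_i=1$, while the hypothesis in (\ref{Sq2neq0-2}) forces some $z_j=1$ or $w_j=1$.

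In case (\ref{Sq2neq0-1}) I would take $y_1=1$ and then apply the equivalences $\matwo{\eta}{\eta^2}\sim\matwo{\eta}{0}$, $\matwo{\eta}{i_3\eta^2}\sim\matwo{\eta}{0}$ and $\matwo{\eta}{\eta}\sim\matwo{\eta}{0}$ (each realized by an evident lower-triangular self-equivalence of $S^4\vee S^3$, $S^4\vee P^4(2^r)$ or $S^4\vee S^4$, using that $\eta^2$ has order $2$) to absorb every remaining $x_i\eta^2$, $\epsilon_j i_3\eta^2$ and $y_i\eta$, so that $\bar{\phi}\simeq\eta\colon S^5\to S^4$; splitting off the untouched summands by Lemmas \ref{lem:HL} and \ref{lem:splitting} then yields the stated decomposition, whose single extra summand is $C^6_\eta=S^4\cup_\eta e^6$.

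In case (\ref{Sq2neq0-2}) I would first use $\matwo{\tilde{\eta}_r}{\eta^a}\sim\matwo{\tilde{\eta}_r}{0}$ and $\matwo{i_P\tilde{\eta}_r}{\eta^a}\sim\matwo{i_P\tilde{\eta}_r}{0}$ for $a=1,2$ (with $q_4\tilde{\eta}_r=\eta$) to kill all $x_i\eta^2$ and $y_i\eta$, then $\matwo{\tilde{\eta}_r}{\tilde{\eta}_s}\sim\matwo{\tilde{\eta}_r}{0}$ and $\matwo{i_P\tilde{\eta}_r}{i_P\tilde{\eta}_s}\sim\matwo{i_P\tilde{\eta}_r}{0}$ for $r\le s$ to reduce to at most one surviving $z_{j_1}=1$ (with $r_{j_1}$ minimal among the $z$-indices) and at most one $w_{j_2}=1$ (with $r_{j_2}$ minimal among the $w$-indices). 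The self-equivalence $\id_P+i_3\eta q_4$ of $P^4(2^{r_{j_1}})$ replaces $\tilde{\eta}_{r_{j_1}}+\epsilon_{j_1}i_3\eta^2$ by $\tilde{\eta}_{r_{j_1}}$, and $\matwo{\tilde{\eta}_r}{i_3\eta^2}\sim\matwo{\tilde{\eta}_r}{0}$, $\matwo{i_P\tilde{\eta}_r}{i_3\eta^2}\sim\matwo{i_P\tilde{\eta}_r}{0}$ remove the remaining $\epsilon_j$-terms; finally (\ref{eq:C_r-P}) gives $\matwo{\tilde{\eta}_r}{i_P\tilde{\eta}_s}\sim\matwo{\tilde{\eta}_r}{0}$ for $r\le s$ and $\matwo{\tilde{\eta}_r}{i_P\tilde{\eta}_s}\sim\matwo{0}{i_P\tilde{\eta}_s}$ for $r>s$, so exactly one of $\tilde{\eta}_{r_{j_1}}$, $i_P\tilde{\eta}_{r_{j_2}}$ survives, namely the one with the smaller subscript, which is the minimal $r_j$ with $u+v\in\im(\beta_{r_j})$ named in the statement. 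The extra summand is then $A^6(\tilde{\eta}_{r_{j_1}})$ or $A^6(i_P\tilde{\eta}_{r_{j_1}})$ from (\ref{eq:An+3}), according as that minimum is attained on a $z$- or a $w$-index.

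The hard part will not be any individual computation but the bookkeeping that connects the cohomological hypotheses to the coefficient constraints: one must check that a surviving $z_j=1$ or $w_j=1$ genuinely manufactures a class $u+v\in H^4(C_{\bar{\phi}};\z{})$ with $\Sq^2(u)\neq 0$, $\Sq^2(v)=0$ and $u+v\in\im(\beta_{r_j})$, and conversely that forbidding all such classes kills every $z_j$ and $w_j$. A subsidiary delicate point is the $\epsilon_j i_3\eta^2$ summand sitting on the same Moore space as $\tilde{\eta}_{r_j}$, which cannot be cleared by a row operation and instead requires the self-homotopy-equivalence $\id_P+i_3\eta q_4$ of $P^4(2^{r_j})$; everything else is elementary-matrix manipulation over the groups $\pi_5$ of the relevant wedge summands followed by identification of the resulting $6$-dimensional cofibres.
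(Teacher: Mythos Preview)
Your proposal is correct and follows essentially the same route as the paper: reduce $\bar\phi$ to its coefficient vector via Corollary~\ref{cor:theta=0}, interpret the Bockstein hypothesis as a constraint on which of $y_i,z_j,w_j$ can be nonzero, and then normalize by elementary row operations using the relations (\ref{eq:chi}), (\ref{eq:chi-eta}), (\ref{eq:C_r-P}) and the self-equivalence $\id_P+i_3\eta q_4$. Your write-up is in fact a bit more explicit than the paper's about the Bockstein bookkeeping linking the hypotheses to the vanishing of the $z_j,w_j$ (which the paper asserts without elaboration), and you correctly flag the one non-matrix step, namely clearing $\epsilon_{j_1}$ on the same Moore summand via $\id_P+i_3\eta q_4$.
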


\begin{proof}[Proof of Theorem \ref{thm:main}]
 Combine Lemma \ref{lem:ST}, the homotopy decomposition (\ref{eq:SW-C_phi}) and Proposition \ref{prop:Sq2eq0} and \ref{prop:Sq2neq0}.
\end{proof}

\begin{proof}[Proof of Theorem \ref{thm:dbsusp}]
 The homotopy types of the discussion of the suspension $\Sigma C_{\bar{\phi}}$ is totally similar to that of $C_{\bar{\phi}}$. The Theorem then follows by Lemma \ref{lem:ST}, the homotopy decomposition (\ref{eq:S2W-C_phi}) and the suspended version of Proposition \ref{prop:Sq2eq0} and \ref{prop:Sq2neq0}.
\end{proof}

\section{Some Applications}\label{sec:appl}
In this section we apply the homotopy decomposition of $\Sigma^2M$ given by Theorem \ref{thm:main} to study the reduced $K$-groups and the cohomotopy sets of $M$. 
\subsection{Reduced $K$-groups}\label{sec:k-grps}
To prove Corollary \ref{cor:kgrps} we recall that the reduced complex $K$-group $\wk(S^n)$ is isomorphic to $\Z$ if $n$ is even, otherwise $\wk(S^n)=0$; the reduced $KO$-groups of spheres are given by
\begin{equation}\label{ko:spheres}
  \begin{tabular}{cccccccccc}
    \toprule 
    $i \pmod 8$&$0$&$1$&$2$&$3$&$4$& $5$&$6$&$7$\\
    \midrule 
    $\ko(S^i)$&$\Z$&$\z{}$&$\z{}$&$0$&$\Z$&$0$&$0$&$0$
   \\ \bottomrule
  \end{tabular}.
\end{equation}

Using the reduced complex $K$-groups and $KO$-groups of spheres one can easily get the following lemma, where the notations $A^7(\tilde{\eta}_r)$ and $A^7(i_P\tilde{\eta}_r)$ refer to (\ref{eq:An+3}).

\begin{lemma}\label{lem:ko-grps}
  Let $m,r$ be positive integers and let $p$ be a prime.
  \begin{enumerate}
    \item $\wk(P^{2m}(p^r))\cong\zp{r}$ and $\wk(P^{2m+1}(p^r))=0$. 
    \item $\wk(C^{2m}_\eta)\cong\Z\oplus\Z$ and $\wk(C^{2m+1}_\eta)=0$.
    \item  $\wk(C^{6}_r)\cong\wk(A^7(i_P\tilde{\eta}_r))\cong\Z$, $\wk(A^7(\tilde{\eta}_r))=0$.
    \item $\ko^2(P^{4+i}(p^r))=\ko^2(C^7_\eta)=0$ for $p\geq 3$ and $i=0,1,2$.
    \item $\ko^2(P^5(2^r))\cong \ko^2(A^7(\tilde{\eta}_r))\cong\z{}$.
    \item $\ko^2(C^6_\eta)\cong\ko^2(C^6_r)\cong \ko^2(A^7(i_P\tilde{\eta}_r))\cong \Z\oplus\z{}$.
  \end{enumerate}

\end{lemma}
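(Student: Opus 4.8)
The plan is to compute each $\widetilde{K}$-group and each $\widetilde{KO}{}^2$-group directly from the cofibre sequences defining the relevant complexes, reducing everything to the reduced $K$-theory and $KO$-theory of spheres recalled in the paragraph above. First I would set up the standard tool: for a cofibration $A\to B\to C$ the long exact sequences $\cdots\to\widetilde{K}(C)\to\widetilde{K}(B)\to\widetilde{K}(A)\to\cdots$ and similarly for $\widetilde{KO}{}^2(-)=\widetilde{KO}(\Sigma^{-2}-)$, together with the known values $\widetilde{K}(S^n)$ ($\Z$ for $n$ even, $0$ for $n$ odd) and the table (\ref{ko:spheres}) (equivalently $\widetilde{KO}{}^2(S^n)=\widetilde{KO}(S^{n-2})$, read off (\ref{ko:spheres})). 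For the Moore spaces, from $S^{2m-1}\xrightarrow{p^r}S^{2m-1}\to P^{2m}(p^r)\to S^{2m}\xrightarrow{p^r}S^{2m}$ one gets $\widetilde{K}(P^{2m}(p^r))\cong\coker(p^r\colon\Z\to\Z)\cong\Z/p^r$ and $\widetilde{K}(P^{2m+1}(p^r))=0$ since both flanking $K$-groups vanish; for $\widetilde{KO}{}^2$ of the odd Moore spaces $P^{4+i}(p^r)$ with $p$ odd one checks that the relevant $\widetilde{KO}$-groups of spheres are either $0$ or have order prime to $p$, so multiplication by $p^r$ is an isomorphism and the cokernel/kernel vanish, giving (4); for $P^5(2^r)$ the cofibration degree-shifts into $\widetilde{KO}(S^2)\xrightarrow{2^r}\widetilde{KO}(S^2)$, i.e.\ $\z{}\xrightarrow{2^r=0}\z{}$, and the adjacent group $\widetilde{KO}(S^3)=0$, yielding $\widetilde{KO}{}^2(P^5(2^r))\cong\z{}$, which is (5).

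Next I would handle the Chang complexes $C^{n+2}_\eta=\Sigma^{n-2}\CP{2}$ and $C^{n+2}_r$. Since $C^{2m}_\eta$ is a suspension of $\CP2$, Bott periodicity gives $\widetilde{K}(C^{2m}_\eta)\cong\widetilde{K}(\CP2)\cong\Z\oplus\Z$; the cofibration $S^n\xrightarrow{\eta}S^{n-1}$ wait — rather $S^{n+1}\to C^{n+2}_\eta\to S^{n+2}$ with connecting map $\eta$ on $K$-theory being zero (as $\eta$ induces $0$ on complex $K$-theory) confirms the extension is as claimed in even dimensions and forces $\widetilde{K}(C^{2m+1}_\eta)=0$ because both $\widetilde{K}(S^{n+1})$ and $\widetilde{K}(S^{n+2})$ vanish when $n$ is even. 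For $C^6_r=P^5(2^r)\cup_{i_4\eta}\bm CS^5$ there is a cofibration $P^5(2^r)\to C^6_r\to S^6$; on $\widetilde{K}$ this gives $0\to\widetilde{K}(S^6)\to\widetilde{K}(C^6_r)\to\widetilde{K}(P^5(2^r))=0$, so $\widetilde{K}(C^6_r)\cong\Z$; the analogous sequence for $A^7(i_P\tilde\eta_r)=C^6_r\cup e^7$ reads $\widetilde{K}(S^7)=0\to\widetilde{K}(A^7(i_P\tilde\eta_r))\to\widetilde{K}(C^6_r)\cong\Z\to\widetilde{K}(S^6)\cong\Z$ where the last map is multiplication by the relevant invariant of $\tilde\eta_r$, which is $0$ on complex $K$-theory (it is detected only mod $2$/by secondary data), so $\widetilde{K}(A^7(i_P\tilde\eta_r))\cong\Z$; and $A^7(\tilde\eta_r)=P^5(2^r)\cup e^7$ fits into $\widetilde{K}(S^7)=0\to\widetilde{K}(A^7(\tilde\eta_r))\to\widetilde{K}(P^5(2^r))=0$, hence $0$. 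That proves (2) and (3), and (1) in full.

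For the $\widetilde{KO}{}^2$ computations (6), I would run the same cofibre sequences shifted down by two dimensions into the range $\widetilde{KO}(S^0),\ldots,\widetilde{KO}(S^5)$. For $C^6_\eta=\Sigma^4\CP2$ one has $\widetilde{KO}{}^2(C^6_\eta)\cong\widetilde{KO}(\Sigma^2\CP2)$, which from the cofibration $S^4\to\Sigma^2\CP2\to S^4$... more precisely $S^{n+1}\xrightarrow{\eta}S^{n}$ gives, after desuspension, an extension $0\to\widetilde{KO}(S^4)\to\widetilde{KO}{}^2(C^6_\eta)\to\widetilde{KO}(S^3)\to$ with $\widetilde{KO}(S^4)\cong\Z$, and one must identify the contribution from the other end; carefully chasing $\widetilde{KO}{}^2(S^6)\to\widetilde{KO}{}^2(C^6_\eta)\to\widetilde{KO}{}^2(S^5)$ together with the $\eta$-multiplication connecting maps (using $\eta\colon\widetilde{KO}(S^{n})\to\widetilde{KO}(S^{n+1})$ which is the nontrivial map $\Z\to\z{}$ in the relevant degrees) yields $\Z\oplus\z{}$. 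The cases $C^6_r$ and $A^7(i_P\tilde\eta_r)$ are then handled by the cofibrations $P^5(2^r)\to C^6_r\to S^6$ and $C^6_r\to A^7(i_P\tilde\eta_r)\to S^7$ desuspended twice, combining $\widetilde{KO}{}^2(P^5(2^r))\cong\z{}$ from part (5) with $\widetilde{KO}{}^2(S^6)\cong\widetilde{KO}(S^4)\cong\Z$ and checking the connecting maps vanish or are surjective as needed; similarly $A^7(\tilde\eta_r)$ fits $P^5(2^r)\to A^7(\tilde\eta_r)\to S^7$ and one reads off $\z{}$, giving (5)'s second clause. The main obstacle I anticipate is not the group orders but pinning down the nontrivial \emph{extension problem} in the $\widetilde{KO}{}^2$-sequences for $C^6_\eta$, $C^6_r$ and $A^7(i_P\tilde\eta_r)$ — deciding that the extension of $\z{}$ by $\Z$ is split (so the answer is $\Z\oplus\z{}$ rather than $\Z$) requires either exhibiting an explicit splitting (e.g.\ via a retraction onto a bottom-cell Moore space or via the Adams operation / realification structure) or invoking that $\widetilde{KO}{}^2$ of these $4$-dimensional-at-bottom complexes has a free summand detected by the underlying complex $K$-theory and a $2$-torsion summand detected by a mod $2$ invariant that the free part cannot hit; I would resolve this by naturality of the $K\to KO$ and $KO\to K$ maps against the sphere computations.
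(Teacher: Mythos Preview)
The paper does not actually prove this lemma; the sentence immediately preceding it simply asserts that the results follow from the reduced $K$- and $KO$-groups of spheres. Your plan---running the long exact sequences in $\widetilde K$ and $\widetilde{KO}{}^{2}$ associated to the defining cofibrations of the Moore spaces, the Chang complexes, and the $A^{7}$-complexes, and reducing everything to the sphere values recorded in (\ref{ko:spheres})---is exactly the computation the paper has in mind, and your outline carries it through correctly in all cases where no extension problem arises. You have also correctly isolated the one genuinely nontrivial step, the extension problems in part~(6), and the tool you propose for resolving them (comparison with complex $K$-theory via the complexification and realification maps) is the standard and effective one. A couple of your cofibration bookkeeping remarks are garbled (for $C^{n+2}_\eta$ the relevant cofibre sequence is $S^{n}\to C^{n+2}_\eta\to S^{n+2}$, not $S^{n+1}\to C^{n+2}_\eta\to S^{n+2}$), but this does not affect the argument since the parities line up the same way.
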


\begin{proposition}\label{prop:kgrps}
  Let $M$ be an orientable smooth closed $5$-manifold given by Theorem \ref{thm:main} or \ref{thm:dbsusp}. There hold isomorphisms
  \[\wk(M)\cong\Z^{d+l}\oplus H\oplus H,\quad \ko(M)\cong\Z^l\oplus(\z{})^{l+d+t_2}.\]

  \begin{proof}
    We only give the proof of $\ko(M)$ here, because the proof of $\wk(M)$ is similar but simpler. By Theorem \ref{thm:main} we can write 
    \begin{multline*}
      \Sigma^2 M\simeq \big(\bigvee_{i=1}^lS^3\big)\!\vee\! \big(\bigvee_{i=1}^{d-c_1}S^4\big)\!\vee\! \big(\bigvee_{i=2}^{d}S^5\big)\!\vee\! \big(\bigvee_{i=1}^{l-c_1-c_2}S^6\big)\!\vee\! P^4(H)\!\vee\! P^6(H)\\\!\vee\! \big(\bigvee_{i=1}^{c_1} C^6_\eta\big)\!\vee\! P^5(\frac{T[c_2]}{\z{r_{j_1}}})\!\vee\!\big(\bigvee_{j_2\neq j=1}^{c_2}C^6_{r_j}\big)\!\vee\! \Sigma^2X, 
    \end{multline*}
    where $ \Sigma^2X\simeq \big(S^5\vee P^5(2^{r_{j_1}})\vee C^6_{r_{j_2}}\big)\cup e^7$.
    By Lemma \ref{lem:ko-grps} and the table (\ref{ko:spheres}), there is a chain of isomorphisms 
    \begin{align*}
      \ko(M)\cong&\ko^2(\Sigma^2M)\cong \bigoplus_{l}\ko^2(S^3)\oplus \bigoplus_{d-c_1}\ko^2(S^4)\oplus \bigoplus_{d}\ko^2(S^5)\\
      &\oplus \bigoplus_{l-c_1-c_2}\ko^2(S^6)\oplus \ko(P^4(H)\vee P^6(H))\oplus\bigoplus_{c_1}\ko^2(C^6_\eta)\\
      &\oplus \ko^2(P^5\big(\frac{T[c_2]}{\z{r_{j_1}}}\big))\oplus \bigoplus_{j_2\neq j=1}^{c_2}\ko^2(C^6_{r_j})\oplus \ko^2(\Sigma^2X)\\
      \cong & (\z{})^{l+d-c_1} \oplus \Z^{l-c_1-c_2}\oplus (\Z\oplus\z{})^{\oplus c_1}\oplus (\z{})^{t_2-c_2-1}\\
      &\oplus (\Z\oplus\z{})^{\oplus (c_2-1)}\oplus \ko^2(\Sigma^2X)\\
  \cong & \Z^{l}\oplus (\z{})^{l+d+t_2},
    \end{align*}
  where $\ko^2(\Sigma^2X)\cong\Z\oplus\z{}\oplus\z{}$ in all cases of Theorem \ref{thm:main} can be easily computed by Lemma \ref{lem:ko-grps}.
  \end{proof}  
\end{proposition}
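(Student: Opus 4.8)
The plan is to turn the statement into an additive computation over a wedge of elementary complexes, evaluating each summand by means of Lemma~\ref{lem:ko-grps} and the $K$-groups of spheres recalled above. First I would use Bott periodicity to absorb the two suspensions into a degree shift: complex $K$-theory is $2$-periodic, so $\wk(M)\cong\wk(\Sigma^2M)$, while for real $K$-theory the suspension isomorphism gives $\ko(M)\cong\ko^2(\Sigma^2M)$. Hence it suffices to evaluate $\wk(-)$ and $\ko^2(-)$ on $\Sigma^2M$. Suspending the homotopy equivalences of Theorem~\ref{thm:main} twice --- legitimate because Theorem~\ref{thm:dbsusp} identifies these with the genuine homotopy types of $\Sigma^2M$ --- writes $\Sigma^2M$ as a wedge whose summands are spheres $S^n$ with $3\le n\le 7$, Moore spaces $P^n(H)$ and $P^n(2^r)$, the suspended Chang complexes $C^6_\eta$ and $C^6_{r_j}$, and, in the non-spin cases, one extra complex (the $\Sigma^2X$ of the proof) of the form $C^7_\eta$, $A^7(\tilde\eta_{r_{j_1}})$ or $A^7(i_P\tilde\eta_{r_{j_1}})$ wedged with a few further stable summands. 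Since reduced cohomology theories take wedges to direct sums, both $\wk(\Sigma^2M)$ and $\ko^2(\Sigma^2M)$ split as a direct sum over these summands.

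Second, I would read off the contribution of each summand. For $\wk$ only even spheres and the even Moore spaces $P^{2m}(p^r)$ survive, and Lemma~\ref{lem:ko-grps}(1)--(3) gives that $P^4(H)\vee P^6(H)$ contributes $H\oplus H$, each $C^6_\eta$ contributes $\Z^2$, each $C^6_{r_j}$ and $A^7(i_P\tilde\eta_{r_{j_1}})$ contributes $\Z$, while $C^7_\eta$ and $A^7(\tilde\eta_{r_{j_1}})$ contribute $0$; collecting free ranks with the constraints $0\le c_1\le\min\{l,d\}$ and $0\le c_2\le\min\{l-c_1,t_2\}$ of Proposition~\ref{prop:W5} yields $\Z^{d+l}\oplus H\oplus H$. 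For $\ko^2$ I would feed the dimensions $n=3,4,5,6$ into the $8$-periodic table (\ref{ko:spheres}) shifted down by two, so that $S^3$ and $S^4$ each contribute $\z{}$, $S^5$ contributes $0$ and $S^6$ contributes $\Z$; meanwhile Lemma~\ref{lem:ko-grps}(4) kills $\ko^2$ of $P^4(H)$, $P^6(H)$, $C^7_\eta$ and all odd-dimensional Moore spaces at odd primes, Lemma~\ref{lem:ko-grps}(5) gives $\ko^2(P^5(2^r))\cong\ko^2(A^7(\tilde\eta_{r_{j_1}}))\cong\z{}$, and Lemma~\ref{lem:ko-grps}(6) gives $\ko^2(C^6_\eta)\cong\ko^2(C^6_{r_j})\cong\ko^2(A^7(i_P\tilde\eta_{r_{j_1}}))\cong\Z\oplus\z{}$.

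The third and only delicate step is to verify that these contributions add up to the same totals in every case of Theorem~\ref{thm:main}: the differences among the spin case and the three non-spin sub-cases are compensated precisely by the leftover complex $\Sigma^2X$, whose $K$-groups coincide with those of the sub-wedge its top $7$-cell is attached to --- this is exactly the content of the $A^7$-computations together with $\ko^2(C^7_\eta)=0$. Concretely I would run the Puppe cofibre sequence of the attaching map of that top cell, which in each case is a suspension of $\eta$, $\tilde\eta_{r_{j_1}}$ or $i_P\tilde\eta_{r_{j_1}}$, and conclude that the top cell changes neither $\wk$ nor $\ko^2$; carrying out this case-by-case comparison is where the real work lies. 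Granting it, the free $\ko$-rank collapses via $l=(l-c_1-c_2)+c_1+c_2$ and the $\z{}$-rank via $l+d+t_2=l+(d-c_1)+c_1+(t_2-c_2)+c_2$, and with the complex-side count this yields $\wk(M)\cong\Z^{d+l}\oplus H\oplus H$ and $\ko(M)\cong\Z^l\oplus(\z{})^{l+d+t_2}$. Finally, when $M$ is a $5$-dimensional \PD complex the same argument applies, now additionally invoking Remark~\ref{rmk:PDcpx} for the two further possible homotopy types of the relevant cofibre, which by the same computations contribute the same $K$-groups.
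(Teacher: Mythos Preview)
Your proposal is correct and follows essentially the same approach as the paper: reduce to $\wk(\Sigma^2M)$ and $\ko^2(\Sigma^2M)$ via Bott periodicity, use the wedge decomposition of $\Sigma^2M$ from Theorem~\ref{thm:main}/\ref{thm:dbsusp}, and evaluate each summand by Lemma~\ref{lem:ko-grps} together with (\ref{ko:spheres}). The only cosmetic difference is packaging: the paper bundles the pieces carrying the top $7$-cell into a single complex $\Sigma^2X\simeq(S^5\vee P^5(2^{r_{j_1}})\vee C^6_{r_{j_2}})\cup e^7$ and asserts $\ko^2(\Sigma^2X)\cong\Z\oplus(\z{})^2$ uniformly, whereas you keep the elementary complexes $S^7$, $C^7_\eta$, $A^7(\tilde\eta_{r_{j_1}})$, $A^7(i_P\tilde\eta_{r_{j_1}})$ separate and check case by case; your final remark on \PD complexes via Remark~\ref{rmk:PDcpx} is extra to what the proposition actually claims.
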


\subsection{Cohomotopy sets}\label{sec:cohtp}
Let $M$ be a closed $5$-manifold.
It is clear that the \emph{cohomotopy Hurewicz maps} 
\[h^i\colon \pi^i(M)\to H^i(M), \quad \alpha\mapsto \alpha^\ast(\iota_i)\] with $\iota_i\in H^i(S^i)$ a generator are isomorphisms for $i=1$ or $i\geq 5$.
For $\pi^4(M)$, there is a short exact sequence of abelian groups (cf. \cite{Steenrod47})
\[0\to \frac{H^5(M;\z{})}{\Sq^2_\Z(H^3(M;\Z))}\to \pi^4(M)\xra{h^4}H^4(M)\to 0,\]
which splits if and only if there holds an equality (cf. \cite[Section 6.1]{Taylor12})
\[\Sq^2_\Z(H^3(M;\Z))=\Sq^2(H^3(M;\z{}))\subseteq H^5(M;\z{}).\]

%
%Given $w\in H^2(M)$, by exactness $h^{-1}(w)\subseteq \pi^2(M)$ is not empty if and only if the composition $\jmath\circ w$ is null-homotopic.

The standard action of $S^3$ on $S^2=S^3/S^1$ by left translation induces a natural action of $\pi^3(M)$ on $\pi^2(M)$. More concretely, the Hopf fibre sequence 
\[S^1\xra{} S^3\xra{\eta}S^2\xra{\imath_2}\CP{\infty}\xra{\jmath}\HP{\infty}\]  induces an exact sequence of sets 
\begin{equation}\label{ES:S2}
  \pi^1(M)\xra{\kappa_u} \pi^3(M)\xra{\eta_\sharp}\pi^2(M)\xra{h}H^2(M)\xra{\jmath_\sharp}\pi^4(M),
\end{equation}
where $[M,\HP{\infty}]=\pi^4(M)$ because  $\HP{\infty}$ has the $6$-skeleton $S^4$, $h=h^2$ is the second cohomotopy Hurewicz map. The homomorphism $\kappa_u$ in (\ref{ES:S2}) is given by the following lemma.

\begin{lemma}[cf. Theorem 3 of \cite{KMT12}]\label{lem:2-cohtp}
  The natural action of $\pi^3(M)$ on $\pi^2(M)$ is transitive on the fibres of $h$ and the stabilizer of $u\in \pi^2(M)$ equals the image of the homomorphism 
\[\kappa_u\colon \pi^1(M)\to \pi^3(M),\quad \kappa_u(v)=\kappa (u\times v)\Delta_M,\]
where $\Delta_M$ is the diagonal map on $M$, $\kappa\colon S^2\times S^1\to S^3$ is the conjugation $(gS^1,t)\mapsto gtg^{-1}$ by setting $S^2=S^3/S^1$.
\end{lemma}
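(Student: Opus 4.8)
The plan is to deduce both assertions from the Hopf fibre sequence together with an auxiliary fibration of mapping spaces built directly from the $S^3$-action; transitivity will be essentially formal once one has (\ref{ES:S2}), while identifying the stabilizer is the substantive part. First I would dispatch the elementary facts about $\kappa$: the assignment $(gS^1,t)\mapsto gtg^{-1}$ is well defined because $S^1$ is a maximal torus of $S^3$, hence self-centralizing ($sts^{-1}=t$ for $s,t\in S^1$), so $gtg^{-1}$ depends only on $gS^1$; one has $\kappa(eS^1,t)=t$ and $\kappa(gS^1,e)=e$, so $\kappa$ is based, $\kappa|_{\ast\times S^1}$ is the inclusion $S^1\hookrightarrow S^3$, and $\kappa(gS^1,-)$ is an isomorphism of $S^1$ onto the isotropy group $gS^1g^{-1}=\mathrm{Stab}_{S^2}(gS^1)$. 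Since $S^1$ and $S^3$ are topological groups, $m\mapsto\kappa(u(m),v(m))$ is multiplicative in $v$, so $\kappa_u\colon\pi^1(M)\to\pi^3(M)$ is a homomorphism, and $\pi^3(M)$ genuinely acts (on the left) on $\pi^2(M)$, so that ``orbit'' and ``stabilizer'' make sense, stabilizers being subgroups. I would also recall that the Hopf fibre sequence exhibits $S^2\to\CP{\infty}\to\HP{\infty}$ as the Borel fibration of the translation action of $S^3$ on $S^2=S^3/S^1$, that $\eta$ is the orbit map of the basepoint $eS^1$, and that $\imath_2$ classifies a generator of $H^2(S^2)$, so that the map $\pi^2(M)\to H^2(M)$ in (\ref{ES:S2}) is exactly $h$.

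For the stabilizer, fix $u\colon M\to S^2$, write $\mu\colon S^3\times S^2\to S^2$ for the action, and introduce
\[A_u\colon\operatorname{Map}(M,S^3)\to\operatorname{Map}(M,S^2),\qquad A_u(\beta)(m)=\mu(\beta(m),u(m)).\]
The key point is that $A_u$ is a Hurewicz fibration: it is the map induced on section spaces over $M$ by the bundle map $\Xi\colon M\times S^3\to M\times S^2$, $\Xi(m,g)=(m,\mu(g,u(m)))$, which is a locally trivial $S^1$-bundle (over each $m$ it restricts to the Hopf-type orbit map $S^3\to S^2$, $g\mapsto\mu(g,u(m))$, and local triviality over $M\times S^2$ is routine, using a local lift of $u$ and a local section of $\eta$); since $\Xi$ respects the projections to $M$, a standard parametrized homotopy-lifting argument gives that $A_u$ is a fibration. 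Its fibre $A_u^{-1}(u)=\{\alpha\colon\mu(\alpha(m),u(m))=u(m)\text{ for all }m\}$ is homeomorphic to $\operatorname{Map}(M,S^1)$ by $v\mapsto\kappa_u(v)$, with inverse sending $\alpha$ to $m\mapsto g_m^{-1}\alpha(m)g_m$ — an element of $S^1$ that, since $S^1$ is abelian, is independent of the chosen local lift $g_m$ of $u(m)$ through $\eta$; moreover this homeomorphism carries the inclusion $A_u^{-1}(u)\hookrightarrow\operatorname{Map}(M,S^3)$ to $v\mapsto\kappa\circ(u\times v)\circ\Delta_M$. Using the constant map $\underline{e}$ (which lies in $A_u^{-1}(u)$ and has $A_u(\underline{e})=u$) as basepoint, the exact sequence of the fibration $A_u$ becomes the exact sequence of pointed sets $\pi^1(M)\xra{\kappa_u}\pi^3(M)\xra{(A_u)_\ast}\pi^2(M)$. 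Since $(A_u)_\ast[\beta]=[\beta]\cdot u$ and the basepoint of $\pi^2(M)$ here is $[u]$, exactness at $\pi^3(M)$ reads $\im(\kappa_u)=(A_u)_\ast^{-1}([u])=\mathrm{Stab}(u)$, which is the second assertion.

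For transitivity, the orbit of $u$ equals $\im((A_u)_\ast)$, and I would show this equals $h^{-1}(h(u))$. The inclusion ``$\subseteq$'' follows from $\imath_2\circ\mu\simeq\imath_2\circ\mathrm{pr}_2\colon S^3\times S^2\to\CP{\infty}=K(\Z,2)$, where $\mathrm{pr}_2$ is the second projection: both pull the generator of $H^2(\CP{\infty})$ back to $\mathrm{pr}_2^{\ast}\iota_2$, since $H^2(S^3)=0$ and restriction to $\{e\}\times S^2$ fixes the coefficient; hence $h([\beta]\cdot u)=[\imath_2\circ\mu\circ(\beta,u)]=[\imath_2\circ u]=h(u)$. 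For the reverse inclusion I would use (\ref{ES:S2}): since $S^2\to\CP{\infty}\to\HP{\infty}$ is the Borel fibration of the translation action, the action of $[M,\Omega\HP{\infty}]=\pi^3(M)$ on $[M,S^2]=\pi^2(M)$ associated to this fibre sequence agrees with the natural translation action, and its orbits are precisely the point-inverses of the induced map $[M,S^2]\to[M,\CP{\infty}]$, i.e.\ of $h$; this last input is the content of Theorem 3 of \cite{KMT12}.

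The main obstacle is the package around $A_u$: showing it is a Hurewicz fibration (through $\Xi$ being a locally trivial $S^1$-bundle over $M\times S^2$ compatible with the projection to $M$, plus the routine but slightly delicate parametrized homotopy lifting) and identifying its fibre over $u$ with $\operatorname{Map}(M,S^1)$ compatibly with $\kappa_u$. Once this is in place the two assertions fall out of the exact sequences of $A_u$ and of (\ref{ES:S2}), respectively. A secondary point needing care is verifying that the $[M,\Omega\HP{\infty}]$-action furnished by the fibre sequence is literally the translation action, not merely a twisted form of it.
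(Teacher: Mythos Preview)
The paper gives no proof of this lemma; it is quoted from \cite{KMT12} and used as a black box. Your proposal therefore supplies an argument where the paper offers none, so there is nothing in the paper to compare against.

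The argument itself is sound. The map $A_u$ built from the bundle map $\Xi\colon M\times S^3\to M\times S^2$ is correctly identified as a Hurewicz fibration (induced on section spaces by a locally trivial $S^1$-bundle compatible with the projection to $M$), and the identification of the fibre $A_u^{-1}(u)$ with $\operatorname{Map}(M,S^1)$ via conjugation---well defined because $S^1$ is self-centralizing in $S^3$---yields the stabilizer statement directly from the $\pi_0$ exact sequence at the basepoint $\underline{e}$. For transitivity, your appeal to the Borel-fibration description of $S^2\to\CP{\infty}\to\HP{\infty}$ is the right move. The only quibble is that at the very end you cite \cite{KMT12} for the fact that the $[M,\Omega\HP{\infty}]$-orbits on $[M,S^2]$ coincide with the fibres of $[M,S^2]\to[M,\CP{\infty}]$, which is mildly circular since that is the lemma under discussion. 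But this is a general property of fibrations: for any fibration $F\to E\to B$, the $[M,\Omega B]$-orbits on $[M,F]$ are exactly the fibres of $[M,F]\to[M,E]$, by the long exact sequence of the induced fibration $\operatorname{Map}(M,F)\to\operatorname{Map}(M,E)\to\operatorname{Map}(M,B)$ together with the fact that the holonomy action of $\Omega B$ on $F$ becomes trivial after composing with $F\to E$. So the citation is unnecessary and your argument stands on its own.
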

Thus, in a certain sense we only need to determine the third cohomotopy group $\pi^3(M)$. Recall the EHP fibre sequence  (cf. \cite[Corollary 4.4.3]{Neisenbook})
\[\Omega^2 S^4\xra{\Omega H}\Omega^2 S^7\xra{} S^3\xra{E}\Omega S^4\xra{H}\Omega S^7,\]
which induces an exact sequence 
\begin{equation}\label{ES:S3}
  [M,\Omega^2S^4]\xra{(\Omega H)_\sharp}[M,\Omega^2S^7]\xra{}[M,S^3]\xra{E_\sharp}[M,\Omega S^4]\to 0,
\end{equation}
where $0=[M,\Omega S^7]=[\Sigma M,S^7]$ by dimensional reason.

\begin{lemma}\label{lem:S2MS4}
Let $M$ be a $5$-manifold given by Theorem \ref{thm:main}. Then
\begin{enumerate}
  \item $[\Sigma^2M,S^7]\cong\Z\langle q_7 \rangle$, where $q_7$ is the canonical pinch map;
  \item $[\Sigma^2M,S^4]$ contains a direct summand $\Z\langle \nu_4q_7\rangle$, where $\nu_4\colon S^7\to S^4$ is the Hopf map.
\end{enumerate}
  \begin{proof}
By Theorem \ref{thm:main}, there is a homotopy decomposition
\[\Sigma^2 M\simeq U\vee V,\]
where $U$ is a $6$-dimensional complex and $V$ belongs to the set 
\[\mathcal{S}=\{S^7,~C^7_\eta, ~A^7(\tilde{\eta}_{r_{j_1}})=P^5(2^{r_{j_1}})\cup_{\tilde{\eta}_{r_{j_1}}}e^7, ~A^7(i_P\tilde{\eta}_{r_{j_1}})=C^6_{r_{j_1}}\cup_{i_P\tilde{\eta}_{r_{j_1}}}e^7\}.\]
Let $q_V\colon \Sigma^2M\to V$ be the pinch map onto $V$. Then it is clear that the pinch map $q_7$ factors as the composite $\Sigma^2M\xra{q_V} V\xra{q_7 \text{ or } \id_7} S^7$.
We immediately have the chain of isomorphisms 
\[[\Sigma^2M,S^7]\xleftarrow[\cong]{q_V^\sharp} [V,S^7]\cong\Z\langle q_7 \rangle.\]  
For the group $[\Sigma^2M,S^4]$, we show that the direct summand $[V,S^4]$ (through the homomorphism $q_V^{\sharp}$) is isomorphic to $\Z\langle \nu_4q_7\rangle\oplus \Z/12$ for any $V\in \mathcal{S}$.

If $V=S^7$, we clearly have $[S^7,S^4]\cong \Z\langle \nu_4 \rangle\oplus\Z/12$. If $V=C^7_\eta$, then from the homotopy cofibre sequence
 \[S^6\xra{\eta}S^5\xra{i_5}C^7_\eta\xra{q_7}S^7\xra{\eta}S^6\]
we have an exact sequence 
 \[0\to\pi_7(S^4)\xra{q_7^\sharp}[C^7_\eta,S^4]\xra{i_5^\sharp}\pi_5(S^4)\xra{\eta^\sharp}\pi_6(S^4).\]
 Since $\eta^\sharp$ is an isomorphism, $i_5^\sharp$ is trivial and hence $q_7^\sharp$ is an isomorphism. Thus we have \[ [C^7_\eta,S^4]\cong (q_7)^\sharp(\pi_7(S^4))\cong \Z\langle \nu_4q_7\rangle\oplus\Z/12.\]
 If $V=A^7(\tilde{\eta}_r)= P^5(2^{r_{j_1}})\cup_{\tilde{\eta}_{r_{j_1}}}e^7$, the homotopy cofibre sequence 
 \[S^6\xra{\tilde{\eta}_{r_{j_1}}}P^5(2^{r_{j_1}})\xra{i_P}A^7(\tilde{\eta}_r)\xra{q_7}S^7\xra{}P^6(2^{r_{j_1}})\]
implying an exact sequence 
\[0\to \pi_7(S^4)\xra{q_7^\sharp}[A^7(\tilde{\eta}_r),S^4]\xra{i_P^\sharp}[P^5(2^{r_{j_1}}),S^4]\xra{\tilde{\eta}_{r_{j_1}}^\sharp}\pi_6(S^4). \]
Since $[P^5(2^{r_{j_1}}),S^4]\cong\z{}\langle  \eta q_5\rangle$, the formula $q_5\tilde{\eta}_{r_{j_1}}=\eta$ in (\ref{eq:chi-eta}) then implying 
 $\tilde{\eta}_{r_{j_1}}^\sharp$ is an isomorphism. Thus 
 \[ [A^7(\tilde{\eta}_r),S^4]\cong (q_7)^\sharp(\pi_7(S^4))\cong \Z\langle \nu_4q_7\rangle\oplus\Z/12.\]
 The computations for $V=A^7(i_P\tilde{\eta}_r)$ is similar. Firstly it is clear that  
 \[[C^6_{r_{j_1}},S^4]\xleftarrow[\cong]{i_P^\sharp} [P^5(2^{r_{j_1}}),S^4]\cong\z{}\langle  \eta q_5\rangle.\]
 Recall we have the composite $q_5\colon P^5(2^{r_{j_1}}) \xra{i_P}C^6_{r_{j_1}}\xra{q_5}S^5$. 
It follows that the homomorphism $ [C^6_{r_{j_1}},S^4]\xra{(i_P \tilde{\eta}_{r_{j_1}})^\sharp} \pi_6(S^4)$ is an isomorphism, and thus there is an isomorphism 
\[[A^7(i_P\tilde{\eta}_r),S^4]\cong (q_7)^\sharp(\pi_7(S^4))\cong \Z\langle \nu_4q_7\rangle\oplus\Z/12. \]  
\end{proof}
\end{lemma}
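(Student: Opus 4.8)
The plan is to use the homotopy decomposition $\Sigma^2 M \simeq U \vee V$ supplied by Theorem \ref{thm:main}, where $U$ is $6$-dimensional and $V$ ranges over the finite list $\mathcal{S} = \{S^7,\, C^7_\eta,\, A^7(\tilde{\eta}_{r_{j_1}}),\, A^7(i_P\tilde{\eta}_{r_{j_1}})\}$. The first observation is that the pinch map $q_7\colon \Sigma^2 M \to S^7$ onto the top cell factors through the pinch map $q_V\colon \Sigma^2 M \to V$ followed by the top-cell pinch of $V$ (or the identity, if $V = S^7$), since $U$ is too low-dimensional to carry the top cell. Dually, applying $[-,S^7]$ and $[-,S^4]$ to the wedge decomposition splits each contravariant functor as $[U,-]\oplus[V,-]$, and since $U$ has dimension $6$, $[U,S^7]=0$, so $[\Sigma^2 M, S^7]\cong [V,S^7]$; for $[\Sigma^2 M, S^4]$ the summand $[U,S^4]$ contributes nothing relevant to the $\Z\langle\nu_4 q_7\rangle$ direct summand, so it suffices to show $[V,S^4]$ contains $\Z\langle\nu_4 q_7\rangle$ as a direct summand for each $V\in\mathcal{S}$.

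For part (1), once reduced to $[V,S^7]$, I would simply note $[S^7,S^7]\cong\Z$ and, for each of the other three choices of $V$, the top-cell cofibration sequence $A \xrightarrow{\ g\ } B \to V \xrightarrow{q_7} S^7 \xrightarrow{\Sigma g} \Sigma B$ induces $[S^7,S^7]\xrightarrow{q_7^\sharp}[V,S^7]\to[B,S^7]=0$ (as $B$ has dimension $5$ or $6$ and $S^7$ is $6$-connected), with $q_7^\sharp$ injective because $\Sigma g$, being a suspension into $\Sigma B$ of dimension at most $7$, composed into $\pi_7(S^7)$ is detected by degree and here is null; hence $[V,S^7]\cong\Z\langle q_7\rangle$ and $q_V^\sharp$ transports this isomorphically. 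For part (2), I would run the same cofibration sequence against $[-,S^4]$: in each case one gets an exact sequence $0\to\pi_7(S^4)\xrightarrow{q_7^\sharp}[V,S^4]\xrightarrow{i^\sharp}[B,S^4]\xrightarrow{g^\sharp}\pi_6(S^4)$, and the key point is that $g^\sharp$ is injective (indeed an isomorphism onto $\pi_6(S^4)\cong\z{}$), forcing $i^\sharp=0$ and hence $q_7^\sharp$ onto, so $[V,S^4]\cong(q_7)^\sharp(\pi_7(S^4))\cong\Z\langle\nu_4 q_7\rangle\oplus\Z/12$. The required injectivity of $g^\sharp$ follows from the relations recorded earlier: for $V=C^7_\eta$ one uses $\eta^\sharp\colon\pi_5(S^4)\to\pi_6(S^4)$ is an isomorphism; for $V=A^7(\tilde{\eta}_r)$ one uses $[P^5(2^r),S^4]\cong\z{}\langle\eta q_5\rangle$ together with $q_5\tilde{\eta}_r=\eta$ from (\ref{eq:chi-eta}); for $V=A^7(i_P\tilde{\eta}_r)$ one first identifies $[C^6_r,S^4]\cong[P^5(2^r),S^4]$ via $i_P^\sharp$ and then uses the factorization $q_5 = q_5\circ i_P$ on Moore spaces.

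The main obstacle is the bookkeeping for the two exotic cases $V=A^7(\tilde{\eta}_r)$ and $V=A^7(i_P\tilde{\eta}_r)$: one must correctly identify the groups $[P^5(2^r),S^4]$ and $[C^6_r,S^4]$ and, crucially, verify that the attaching map $\tilde{\eta}_r$ (resp. $i_P\tilde{\eta}_r$) induces a surjection (hence isomorphism) onto $\pi_6(S^4)\cong\z{}$ under $[-,S^4]$, which is exactly where the Hopf-map relation $q_5\tilde{\eta}_r=\eta$ and the compatibility of $i_P$ with the pinch maps enter. Everything else — the splitting off of $U$, the vanishing of $[U,S^7]$, and the identification of $\pi_7(S^4)\cong\Z\langle\nu_4\rangle\oplus\Z/12$ — is routine dimension-counting and standard homotopy of spheres.
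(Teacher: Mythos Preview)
Your proposal is correct and follows essentially the same route as the paper: both split $\Sigma^2 M\simeq U\vee V$ with $\dim U\le 6$ and $V\in\mathcal{S}$, reduce to computing $[V,S^7]$ and $[V,S^4]$, and then run the Barratt--Puppe sequence of the top-cell cofibration for each $V$, using $\eta^\sharp\colon\pi_5(S^4)\to\pi_6(S^4)$ an isomorphism, $[P^5(2^r),S^4]\cong\z{}\langle\eta q_5\rangle$ with $q_5\tilde{\eta}_r=\eta$, and $i_P^\sharp\colon[C^6_r,S^4]\xrightarrow{\cong}[P^5(2^r),S^4]$ to force $q_7^\sharp$ to be an isomorphism onto $\pi_7(S^4)\cong\Z\oplus\Z/12$. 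One small wording fix: after showing $i^\sharp=0$ you should say $q_7^\sharp$ is an \emph{isomorphism} (not merely onto), since you also need the injectivity encoded in the ``$0\to$'' at the left end of your exact sequence; you clearly have this in mind, but state it explicitly.
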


\begin{lemma}\label{lem:A6S4}
  Let $r\geq 1$ be an integer. There hold isomorphisms 
  \begin{enumerate}
    \item $[C^5_\eta,S^4]=0$ and $[C^5_r,S^4]\cong \z{r+1}$.
    \item $[A^6(\tilde{\eta}_r),S^4]\cong\z{r-1}$, where $\Z/1=0$ for $r=1$.
    \item $[A^6(i_P\tilde{\eta}_r),S^4]\cong\z{r}$.
  \end{enumerate}
  \begin{proof}
 (1)  The groups in (1) refer to \cite{Baues85} or \cite{lipc2-an2}.

 (2) The homotopy cofibre sequence for $A^6(\tilde{\eta}_r)$, as given in the proof of Lemma \ref{lem:S2MS4}, implying an exact sequence 
 \[ [P^5(2^r),S^4]\xra[\cong]{\tilde{\eta}_r^\sharp}[S^6,S^4]\to [A^6(\tilde{\eta}_r),S^4]\xra{i_P^\sharp} [P^4(2^r),S^4]\xra{\tilde{\eta}_r^\sharp}[S^5,S^4].\]
Thus $(i_P)^\sharp$ is a monomorphism and $\im (i_P)^\sharp=\ker(\tilde{\eta}_r^\sharp)\cong\z{r-1}\langle 2q_4\rangle$. 

(3) The computation of the group $[A^6(i_P\tilde{\eta}_r),S^4]$ is similar, by noting the isomorphism $[C^5_r,S^4]\cong \z{r+1}\langle q_4\rangle$ (cf. \cite{Baues85}).
  \end{proof}
\end{lemma}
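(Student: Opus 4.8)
The plan is to compute all four groups by feeding appropriate Puppe (homotopy cofibre) sequences into the contravariant functor $[-,S^4]$, in the same spirit as the proof of Lemma \ref{lem:S2MS4}, so that everything reduces to the classical values $\pi_4(S^4)\cong\Z\langle\id_4\rangle$, $\pi_5(S^4)\cong\z{}\langle\eta\rangle$, $\pi_6(S^4)\cong\z{}\langle\eta^2\rangle$ together with the Moore-space groups $[P^4(2^r),S^4]\cong\z{r}\langle q_4\rangle$ and $[P^5(2^r),S^4]\cong\z{}\langle\eta q_5\rangle$. The last two are themselves obtained by applying $[-,S^4]$ to the cofibrations $S^n\xra{2^r}S^n\xra{i_n}P^{n+1}(2^r)\xra{q_{n+1}}S^{n+1}$ and using the vanishing of the composite $q_{n+1}i_n\colon S^n\to S^{n+1}$.

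For part (1), the complexes $C^5_\eta$ and $C^5_r$ are elementary Chang complexes, so the groups $[C^5_\eta,S^4]$ and $[C^5_r,S^4]$ are among those classified by Baues \cite{Baues85} (see also \cite{lipc2-an2}) and I would simply quote them. A self-contained derivation is short as well. From the Puppe sequence of $S^4\xra{\eta}S^3\xra{i_3}C^5_\eta\xra{q_5}S^5$ one sees that $[C^5_\eta,S^4]$ is the cokernel of precomposition with $\eta\colon S^5\to S^4$ on $\pi_4(S^4)$, which is onto because $\id_4\circ\eta=\eta$ generates $\pi_5(S^4)$; hence $[C^5_\eta,S^4]=0$. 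From the Puppe sequence of $S^4\xra{i_3\eta}P^4(2^r)\to C^5_r\xra{q_5}S^5\xra{\Sigma(i_3\eta)}P^5(2^r)$, the relations $q_4i_3=0$ and $q_5i_4=0$ force both homomorphisms induced by $i_3\eta$ and $\Sigma(i_3\eta)$ on $[-,S^4]$ to vanish, giving a short exact sequence $0\to\pi_5(S^4)\to[C^5_r,S^4]\to[P^4(2^r),S^4]\to 0$, i.e. a group of order $2^{r+1}$.

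For parts (2) and (3) I would apply $[-,S^4]$ to the cofibre sequences
\[S^5\xra{\tilde{\eta}_r}P^4(2^r)\xra{i_P}A^6(\tilde{\eta}_r)\xra{q_6}S^6\xra{\Sigma\tilde{\eta}_r}P^5(2^r),\]
\[S^5\xra{i_P\tilde{\eta}_r}C^5_r\to A^6(i_P\tilde{\eta}_r)\xra{q_6}S^6\xra{\Sigma(i_P\tilde{\eta}_r)}C^6_r,\]
where $\Sigma C^5_r\simeq C^6_r$. The two inputs that drive both computations are the relation $q_4\tilde{\eta}_r=\eta$ from (\ref{eq:chi-eta}) (so, after suspension, $q_5\circ\Sigma\tilde{\eta}_r=\eta$), and, for (3), the facts that the pinch maps $C^5_r\to S^4$ and $C^6_r\to S^5$ extending the corresponding pinch maps of the Moore subcomplexes restrict back to $q_4$ and $q_5$, together with the value $[C^5_r,S^4]\cong\z{r+1}\langle q_4\rangle$ from part (1). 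With these, the connecting homomorphism into $[S^6,S^4]\cong\z{}$ is hit by $\eta q_5\in[P^5(2^r),S^4]$ (resp. by $\eta q_5\in[C^6_r,S^4]$), and its image there is $\eta\circ\eta=\eta^2\neq 0$; hence that homomorphism is surjective, the summand $\pi_6(S^4)$ is killed, and $i_P^\sharp$ (resp. the homomorphism induced by $C^5_r\to A^6(i_P\tilde{\eta}_r)$) is injective with image the kernel of $\tilde{\eta}_r^\sharp$ (resp. of $(i_P\tilde{\eta}_r)^\sharp$). Since $\tilde{\eta}_r^\sharp$ sends the generator $q_4$ of $[P^4(2^r),S^4]\cong\z{r}$ to $\eta$, its kernel is $2\cdot\z{r}\cong\z{r-1}$, proving (2); and since $(i_P\tilde{\eta}_r)^\sharp$ sends the generator $q_4$ of $\z{r+1}$ to $\eta$, its kernel is $2\cdot\z{r+1}\cong\z{r}$, proving (3).

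The one delicate point --- hence the main obstacle --- is the cyclicity in part (1): that the order-$2^{r+1}$ group $[C^5_r,S^4]$ is $\z{r+1}$ rather than $\z{r}\oplus\z{}$. This matters not only for (1) but also for (3), where a non-cyclic answer would enlarge the kernel of $(i_P\tilde{\eta}_r)^\sharp$ to $\z{r-1}\oplus\z{}$. I would settle it by citing Baues \cite{Baues85}; alternatively, one shows directly that the pinch map $q_4\colon C^5_r\to S^4$ extending the pinch of $P^4(2^r)$ has order exactly $2^{r+1}$, i.e. that the class $2^r q_4$, which necessarily factors through the top cell as $\gamma\circ q_5$ with $\gamma\in\pi_5(S^4)\cong\z{}$, satisfies $\gamma=\eta\neq 0$. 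Everything else is a routine diagram chase once each boundary homomorphism has been evaluated on generators.
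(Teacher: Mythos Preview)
Your proposal is correct and follows essentially the same approach as the paper: both apply $[-,S^4]$ to the Puppe sequences for $A^6(\tilde{\eta}_r)$ and $A^6(i_P\tilde{\eta}_r)$, use $q_{n+1}\tilde{\eta}_r=\eta$ to see that the connecting map into $[S^6,S^4]$ is onto, and identify the answer as the kernel of $\tilde{\eta}_r^\sharp$ (resp.\ $(i_P\tilde{\eta}_r)^\sharp$), with part (1) and the cyclicity of $[C^5_r,S^4]$ resolved by citing \cite{Baues85}. Your write-up is somewhat more explicit than the paper's---you spell out the short-exact-sequence derivation of the order of $[C^5_r,S^4]$ and correctly flag cyclicity as the one nontrivial point---but there is no substantive difference in method.
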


\begin{proposition}\label{prop:MS3}
  Let $M$ be a $5$-manifold given by Theorem \ref{thm:main} or \ref{thm:dbsusp}. The homomorphism $(\Omega H)_\sharp$ in (\ref{ES:S3}) is surjective and hence there is an isomorphism
  \[\Sigma \colon \pi^3(M)\to \pi^4(\Sigma M).\]

  Moreover, let $M$ be the $5$-manifold,  together with the integers $c_1,c_2$ and $r_{j_1}$, given by Theorem \ref{thm:main}, then we have the following concrete results:
   \begin{enumerate}
    \item if $M$ is spin, then 
    \[\pi^3(M)\cong \Z^d\oplus (\z{})^{l+1-c_1-c_2}\oplus T[c_2]\oplus (\bigoplus_{j=1}^{c_2}\z{r_{j}+1});\]
    \item if $M$ is non-spin and the conditions in (\ref{nonspin:a}) hold, then 
    \[\pi^3(M)\cong 
     \Z^{d}\oplus (\z{})^{l-c_1-c_2}\oplus T[c_2]\oplus (\bigoplus_{j=1}^{c_2}\z{r_{j}+1});\]
     \item  if $M$ is non-spin and the conditions in (\ref{nonspin:b}) hold, then $\pi^3(M)$ is isomorphic to one of the following groups:
     \begin{align*}
     ~~(i) ~~&  \Z^{d}\oplus (\z{})^{l-c_1-c_2}\oplus \frac{T[c_2]}{\z{r_{j_1}}}\oplus (\bigoplus_{j=1}^{c_2}\z{r_{j}+1})\oplus \z{r_{j_1}-1},\\
     ~~(ii)~~ &\Z^{d}\oplus (\z{})^{l-c_1-c_2}\oplus T[c_2]\oplus (\bigoplus_{j_1\neq j=1}^{c_2}\z{r_{j}+1})\oplus \z{r_{j_1}}.
     \end{align*} 
     
   \end{enumerate}
 
  \begin{proof}
   We firstly apply the exact sequence (\ref{ES:S3}) to show that the suspension $\pi^3(M)\xra{\Sigma}\pi^4(\Sigma M)$ is an isomorphism. By duality, it suffices to show the second James-Hopf invariant $H$ induces a surjection $H_\sharp\colon [\Sigma^2M,S^4]\to [\Sigma^2M,S^7].$
  By Lemma \ref{lem:S2MS4}, there hold isomorphisms
   \[[\Sigma^2M,S^7]\cong \Z\langle q_7\rangle \text{ and } [\Sigma^2M,S^4]\cong\Z\langle \nu_4q_7\rangle\oplus G\]
 for some abelian group $G$. Then the surjectivity of $H_\sharp$ follows by the homotopy equalities
  \[H(\nu_4)=\id_{7},~~H(\nu_4q_7)=H(\nu_4)q_7=q_7.\] 
Note the first statement only depends the homotopy type of the double suspension $\Sigma^2M$, so we can also assume that $M$ is the $5$-manifold satisfying conditions in Theorem \ref{thm:main}. 

The computations of the group $[\Sigma M,S^4]$ follows by Theorem \ref{thm:main}, Lemma \ref{lem:A6S4}:
 \begin{enumerate}
  \item If $M$ is spin, then 
  \begin{multline*}
    [\Sigma M,S^4]\cong \big(\bigoplus_{i=1}^d[S^4,S^4]\big)\oplus \big(\bigoplus_{i=1}^{l-c_1-c_2}[S^5,S^4]\big)\oplus [P^4(T[c_2]),S^4]\\
    \oplus \big(\bigoplus_{j=1}^{c_2}[C^5_{r_{j}},S^4]\big)\oplus [S^6,S^4].
  \end{multline*}
  \item If $M$ is non-spin and $\Sigma M$ is given by (\ref{nonspin:a}), then 
    \begin{multline*}
      [\Sigma M,S^4]\cong \big(\bigoplus_{i=2}^d[S^4,S^4]\big)\oplus \big(\bigoplus_{i=1}^{l-c_1-c_2}[S^5,S^4]\big)\oplus [P^4(T[c_2]),S^4]\\
      \oplus \big(\bigoplus_{j=1}^{c_2}[C^5_{r_{j}},S^4]\big)\oplus [C^6_\eta,S^4]. 
    \end{multline*}
  \item  If $M$ is non-spin and $\Sigma M$ is given by (\ref{nonspin:b}), then 
  \begin{multline*}
    [\Sigma M,S^4]\cong \big(\bigoplus_{i=1}^d[S^4,S^4]\big)\oplus \big(\bigoplus_{i=1}^{l-c_1-c_2}[S^5,S^4]\big)\oplus [P^4\big(\frac{T[c_2]}{\z{r_{j_1}}}\big),S^4]\\
    \oplus \big(\bigoplus_{j=1}^{c_2}[C^5_{r_{j}},S^4]\big)\oplus [A^6(\tilde{\eta}_{r_{j_1}}),S^4],
  \end{multline*} 
or 
\begin{multline*}
  [\Sigma M,S^4]\cong \big(\bigoplus_{i=1}^d[S^4,S^4]\big)\oplus \big(\bigoplus_{i=1}^{l-c_1-c_2}[S^5,S^4]\big)\oplus [P^4(T[c_2]),S^4]\\
  \oplus \big(\bigoplus_{j_1\neq j=1}^{c_2}[C^5_{r_{j}},S^4]\big)\oplus [A^6(i_P\tilde{\eta}_{r_{j_1}}),S^4].
\end{multline*}

 \end{enumerate}

  \end{proof}
\end{proposition}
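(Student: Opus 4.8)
The plan is to establish the two assertions in turn: first that the suspension $\Sigma\colon\pi^3(M)\to\pi^4(\Sigma M)$ is an isomorphism, and then to read off $\pi^3(M)$ from the wedge decompositions of $\Sigma M$ in Theorem \ref{thm:main}. For the first assertion I would feed $M$ into the exact sequence (\ref{ES:S3}) attached to the EHP fibre sequence for $S^3$. Since $\dim\Sigma M=6$ and $S^7$ is $6$-connected, $[M,\Omega S^7]=[\Sigma M,S^7]=0$, so exactness forces $E_\sharp$ to be onto; and under the adjunction $[M,\Omega S^4]\cong[\Sigma M,S^4]$ the map $E_\sharp$ is exactly the suspension $\Sigma\colon\pi^3(M)\to\pi^4(\Sigma M)$. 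By exactness of (\ref{ES:S3}) at $[M,S^3]$ and at $[M,\Omega^2S^7]$, the kernel of $E_\sharp$ equals the image of $[M,\Omega^2S^7]\to[M,S^3]$, which is trivial precisely when $(\Omega H)_\sharp$ is surjective; and a further adjunction identifies $(\Omega H)_\sharp$ with the second James--Hopf invariant $H_\sharp\colon[\Sigma^2M,S^4]\to[\Sigma^2M,S^7]$. So everything reduces to the surjectivity of $H_\sharp$.

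For this I would invoke Lemma \ref{lem:S2MS4}, which gives $[\Sigma^2M,S^7]\cong\Z\langle q_7\rangle$ (with $q_7$ the canonical pinch map) and produces a direct summand $\Z\langle\nu_4q_7\rangle$ inside $[\Sigma^2M,S^4]$, where $\nu_4\colon S^7\to S^4$ is the Hopf map. Since $\nu_4$ has Hopf invariant one, $H(\nu_4)=\id_{7}$, hence $H(\nu_4q_7)=H(\nu_4)q_7=q_7$ generates $[\Sigma^2M,S^7]$; thus $H_\sharp$ is surjective and $\Sigma\colon\pi^3(M)\xrightarrow{\cong}\pi^4(\Sigma M)=[\Sigma M,S^4]$. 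Because this conclusion depends only on the homotopy type of $\Sigma^2M$, for the remaining computations one may take $M$ to be exactly the manifold of Theorem \ref{thm:main}.

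To obtain the concrete group structures I would substitute each of the four homotopy decompositions of $\Sigma M$ from Theorem \ref{thm:main} into $[\Sigma M,S^4]$, which splits as the finite direct sum of the contributions of the wedge summands (each summand being a suspension, there is no extension problem). The vanishing contributions are $[S^2,S^4]=[S^3,S^4]=[P^3(H),S^4]=0$ by connectivity/dimension, $[P^5(H),S^4]=0$ since $H$ has odd order, and $[C^5_\eta,S^4]=0$ by Lemma \ref{lem:A6S4}. The nontrivial ones are $[S^4,S^4]\cong\Z$, $[S^5,S^4]\cong[S^6,S^4]\cong\z{}$; $[P^4(G),S^4]\cong G$ for any finite abelian $G$ (from the canonical Moore-space cofibration together with $\pi_3(S^4)=0$, applied to each cyclic summand), so $P^4(T[c_2])$ and $P^4(T[c_2]/\z{r_{j_1}})$ contribute $T[c_2]$ and that quotient; $[C^5_{r_j},S^4]\cong\z{r_j+1}$, $[A^6(\tilde{\eta}_{r_{j_1}}),S^4]\cong\z{r_{j_1}-1}$ and $[A^6(i_P\tilde{\eta}_{r_{j_1}}),S^4]\cong\z{r_{j_1}}$ by Lemma \ref{lem:A6S4}; and $[C^6_\eta,S^4]\cong\Z$, which I would extract from the cofibre sequence $S^5\xra{\eta}S^4\to C^6_\eta\to S^6$ using that $\eta$ generates both $\pi_5(S^4)\cong\z{}$ and $\pi_6(S^4)\cong\z{}$ under post- and pre-composition, so that the two boundary maps kill the top cell and cut $[S^4,S^4]$ down to its index-$2$ subgroup. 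Collecting the relevant terms in the spin case, in the non-spin case (\ref{nonspin:a}), and in the two forms of the non-spin case (\ref{nonspin:b}) yields the four displayed isomorphisms for $\pi^3(M)$.

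The hard part is the first assertion: after the string of adjunctions one must recognise $(\Omega H)_\sharp$ as the James--Hopf map and, above all, have the class $\nu_4q_7$ genuinely available in $[\Sigma^2M,S^4]$ with a controlled James--Hopf image. That is precisely the role of Lemma \ref{lem:S2MS4}, which splits off a $\Z\langle\nu_4q_7\rangle$ summand uniformly over the possible top wedge factors $V\in\mathcal{S}$ of $\Sigma^2M$; granting that lemma, the surjectivity of $H_\sharp$ is immediate from Hopf invariant one. The computation of the concrete groups is then routine bookkeeping over finitely many elementary complexes, the only step needing a moment's care being $[C^6_\eta,S^4]\cong\Z$.
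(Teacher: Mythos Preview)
Your argument is correct and follows essentially the same route as the paper's proof: reduce via the EHP exact sequence (\ref{ES:S3}) and adjunction to the surjectivity of $H_\sharp\colon[\Sigma^2M,S^4]\to[\Sigma^2M,S^7]$, then use Lemma \ref{lem:S2MS4} together with $H(\nu_4)=\id_7$ to conclude; finally read off $[\Sigma M,S^4]$ summand by summand from Theorem \ref{thm:main} and Lemma \ref{lem:A6S4}. You even supply the computation $[C^6_\eta,S^4]\cong\Z$ which the paper uses in case (\ref{nonspin:a}) but does not isolate in a lemma, so your write-up is slightly more self-contained at that point.
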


\bibliographystyle{amsalpha}
\bibliography{refs}

\end{document}